\def\eqref#1{equation~\ref{#1}}
\def\ceil#1{\lceil #1 \rceil}
\def\1{\bm{1}}
\def\mH{{\bm{H}}}
\DeclareMathAlphabet{\mathsfit}{\encodingdefault}{\sfdefault}{m}{sl}
\SetMathAlphabet{\mathsfit}{bold}{\encodingdefault}{\sfdefault}{bx}{n}
\newcommand{\R}{\mathbb{R}}
\definecolor{mydarkgreen}{RGB}{39,130,67}
\definecolor{mydarkorange}{RGB}{236,147,14}
\definecolor{mydarkred}{RGB}{236,147,14}
\definecolor{blue}{RGB}{0,0,255}
\newcommand{\product}{}
\newcommand*{\refalgone}[1]{\ref{#1}}
\DeclareSymbolFont{extraup}{U}{zavm}{m}{n}
\DeclareMathSymbol{\varheart}{\mathalpha}{extraup}{86}
\DeclareMathSymbol{\vardiamond}{\mathalpha}{extraup}{87}
\newenvironment{protocol}[1][htb]{%
    \renewcommand{\ALG@name}{Protocol}
   \begin{algorithm}[#1]%
  }{\end{algorithm}}
\DeclareSymbolFont{extraup}{U}{zavm}{m}{n}
\DeclareMathSymbol{\varheart}{\mathalpha}{extraup}{86}
\DeclareMathSymbol{\vardiamond}{\mathalpha}{extraup}{87}
\renewcommand{\eqref}[1]{(\ref{#1})}
\renewcommand{\ceil}[1]{\left\lceil #1\right\rceil} 
\definecolor{bgcolor}{rgb}{0.76,0.88,0.50}
\definecolor{bgcolor0}{rgb}{0.93,0.99,1}
\definecolor{bgcolor1}{rgb}{0.8,1,1}
\definecolor{bgcolor2}{rgb}{0.8,1,0.8}
\definecolor{bgcolor3}{rgb}{0.50,0.90,0.50}
\newcommand{\algname}[1]{{\sf #1}}
\newcommand{\norm}[1]{\left\| #1 \right\|}
\newcommand{\inp}[2]{\left\langle#1,#2\right\rangle} 
\newcommand{\abs}[1]{\left| #1 \right|}
\newcommand{\flr}[1]{\left\lfloor #1\right\rfloor} 
\newcommand{\N}{\mathbb{N}} 
\newcommand{\Exp}[1]{{\mathbb{E}}\left[#1\right]}
\newcommand{\ExpCond}[2]{{\mathbb{E}}\left[\left.#1\right\vert#2\right]}
\newcommand{\Prob}[1]{\mathbb{P}\left(#1\right)} 
\newcommand{\ProbCond}[2]{\mathbb{P}\left(#1\middle\vert#2\right)}
\newcommand{\cA}{\mathcal{A}}
\newcommand{\cC}{\mathcal{C}}
\newcommand{\cO}{\mathcal{O}}
\theoremstyle{plain}
\newtheorem{theorem}{Theorem}[section]
\newtheorem*{theorem*}{Theorem}
\newtheorem{lemma}[theorem]{Lemma}
\theoremstyle{definition}
\newtheorem{definition}[theorem]{Definition}
\newtheorem{assumption}[theorem]{Assumption}
\theoremstyle{remark}
\newtheorem{remark}[theorem]{Remark}
\newcommand{\eqdef}{:=}
\newcommand{\vast}{\bBigg@{4}}
\title{Proving the Limited Scalability of \\ Centralized Distributed Optimization via a \\ New Lower Bound Construction}
\author{Alexander Tyurin \\
AXXX, Moscow, Russia \\
Applied AI Institute, Moscow, Russia \\
\texttt{alexandertiurin@gmail.com} \\
}
\begin{document}

\maketitle

\begin{abstract}
We consider centralized distributed optimization in the classical federated learning setup, where $n$ workers jointly find an $\varepsilon$--stationary point of an $L$--smooth, $d$--dimensional nonconvex function $f$, having access only to unbiased stochastic gradients with variance $\sigma^2$. Each worker requires at most $h$ seconds to compute a stochastic gradient, and the communication times from the server to the workers and from the workers to the server are $\tau_{\textnormal{s}}$ and $\tau_{\textnormal{w}}$ seconds per coordinate, respectively. One of the main motivations for distributed optimization is to achieve scalability with respect to $n$. For instance, it is well known that the distributed version of \algname{SGD} has a variance-dependent runtime term $\nicefrac{h \sigma^2 L \Delta}{n \varepsilon^2},$ which improves with the number of workers $n,$ where $\Delta \eqdef f(x^0) - f^*,$ and $x^0 \in \R^d$ is the starting point. Similarly, using unbiased sparsification compressors, it is possible to reduce \emph{both} the variance-dependent runtime term and the communication runtime term from $\tau_{\textnormal{w}} d \nicefrac{L \Delta}{\varepsilon}$ to $\nicefrac{\tau_{\textnormal{w}} d L \Delta}{n \varepsilon} + \sqrt{\nicefrac{\tau_{\textnormal{w}} d h \sigma^2}{n \varepsilon}} \cdot \nicefrac{L \Delta}{\varepsilon},$ which also benefits from increasing $n.$ However, once we account for the communication from the server to the workers $\tau_{\textnormal{s}}$, we prove that it becomes infeasible to design a method using unbiased random sparsification compressors that scales both the server-side communication runtime term $\tau_{\textnormal{s}} d \nicefrac{L \Delta}{\varepsilon}$ and the variance-dependent runtime term $\nicefrac{h \sigma^2 L \Delta}{\varepsilon^2},$ better than poly-logarithmically in $n$, even in the homogeneous (i.i.d.) case, where all workers access the same function or distribution. Indeed, when $\tau_{\textnormal{s}} \simeq \tau_{\textnormal{w}},$ our lower bound is $\textstyle \tilde{\Omega}\left(\min\left\{h \left(\frac{\sigma^2}{n \varepsilon} + 1\right) \frac{L \Delta}{\varepsilon} + {\color{mydarkred}\tau_{\textnormal{s}} d \frac{L \Delta}{\varepsilon}},\; h \frac{L \Delta}{\varepsilon} + {\color{mydarkred}h \frac{\sigma^2 L \Delta}{\varepsilon^2}}\right\} \right).$ To establish this result, we construct a new ``worst-case'' function and develop a new lower bound framework that reduces the analysis to the concentration of a random sum, for which we prove a concentration bound. These results reveal fundamental limitations in scaling distributed optimization, even under the homogeneous (i.i.d.) assumption.
\end{abstract}

\section{Introduction}
\begin{wrapfigure}{r}{0.3\textwidth}
  \centering
  \vspace{-2mm}
  \includegraphics[width=0.24\textwidth]{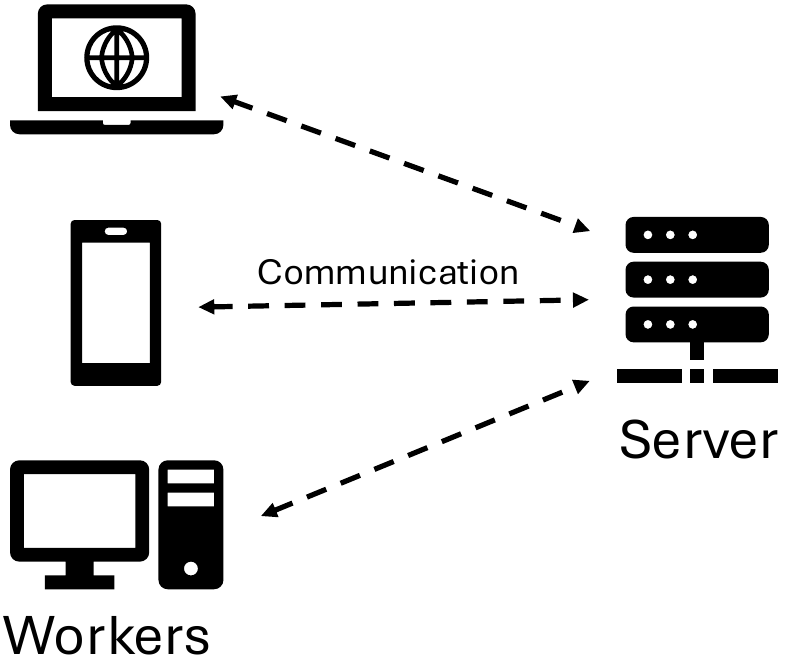}
  \vspace{-2mm}
\end{wrapfigure}
We focus on the classical federated learning setup, where $n$ workers, such as CPUs, GPUs, servers, or mobile devices, are connected to a central server via a communication channel \citep{konevcny2016federated, mcmahan2017communication}. All workers collaboratively solve a common optimization problem in a distributed fashion by computing stochastic gradients and sharing this information with the server, which then propagates it to other workers. Together, they aim to minimize a smooth nonconvex objective function defined as
\begin{align}
\label{eq:main_problem}
\textstyle \min \limits_{x \in \R^d} f(x),
\end{align}
where $f\,:\,\R^d \rightarrow \R$ and $d$ is the dimension of $f$. We assume that $d$ is huge, which is indeed the case in modern machine learning and large language model training \citep{GPT3,touvron2023llama}. 

We consider the \emph{homogeneous} (i.i.d.) setting, where all workers have access to stochastic gradients of the same underlying function $f$. \textbf{As the reader will see, the homogeneous setting assumption is a challenge, not a limitation of our work}: all results 
extend, potentially with even stronger implications, to the more general \emph{heterogeneous} (non-i.i.d.) case, when each worker $i$ works with $f_i \neq f.$  We consider the standard assumptions:
\begin{assumption}
  \label{ass:lipschitz_constant}
$f$ is differentiable \& $L$--smooth, i.e., $\norm{\nabla f(x) - \nabla f(y)} \leq L \norm{x - y}$, $\forall x, y \in \R^d.$
\end{assumption}
\begin{assumption}
  \label{ass:lower_bound}
  There exist $f^* \in \R$ such that $f(x) \geq f^*$ for all $x \in \R^d$. We define $\Delta \eqdef f(x^0) - f^*,$ where $x^0$ is a starting point of methods.
\end{assumption}
For all $i \in [n],$ worker $i$ calculates unbiased stochastic gradients $\nabla f(x;\xi)$ with $\sigma^2$-variance-bounded variances, where $\xi$ is a random variable with some distribution $\mathcal{D}_{\xi}.$
\begin{assumption}[Homogeneous setting]
  \label{ass:stochastic_variance_bounded}
  For all $i \in [n],$ worker $i$ can only calculate $\nabla f(x;\xi)$ and ${\rm \mathbb{E}}_{\xi}[\nabla f(x;\xi)] = \nabla f(x)$ and 
    ${\rm \mathbb{E}}_{\xi}[\|\nabla f(x;\xi) - \nabla f(x)\|^2] \leq \sigma^2$ for all $x \in \R^d,$ where $\sigma^2 \geq 0.$
\end{assumption}
The goal in the nonconvex world is to find an $\varepsilon$--stationary point, a (random) point $\bar{x} \in \R^d$ such that ${\rm \mathbb{E}}[\|\nabla f(\bar{x})\|^2] \leq \varepsilon$ \citep{nemirovskij1983problem,murty1985some}.
We also consider a realistic computation and communication scenario:
\begin{assumption}
\label{ass:time}
Each of the $n$ workers requires at most $h$ seconds to compute a stochastic gradient, and communication \emph{from the server to any worker} (s2w communication) takes at most $\tau_{\textnormal{s}}$ seconds per coordinate, and communication \emph{from any worker to the server} (w2s communication) takes at most $\tau_{\textnormal{w}}$ seconds per coordinate.
\end{assumption}
For instance, under Assumption~\ref{ass:time}, it takes $d \times \tau_{\textnormal{s}}$ and $d \times \tau_{\textnormal{w}}$ seconds to send a vector $v \in \R^d$ from the server to any worker and from any worker to the server, respectively. We consider settings with bidirectional communication costs, where communication in both directions requires time. Typically, especially in the early stages of federated learning algorithm development, most works assume that communication \emph{from the server to the workers} is free, i.e., $\tau_{\textnormal{s}} = 0,$ which is arguably not true in practice: communication over the Internet or 4G/5G networks can be costly in both directions \citep{huang2012close, narayanan2021variegated}.

\subsection{Related work}
\label{sec:related_work}
\textbf{1. Communication is free.} Let us temporarily assume that communication does not take time, i.e., $\tau_{\textnormal{s}} = 0$ and $\tau_{\textnormal{w}} = 0.$ Then, in this scenario, the theoretically fastest strategy is to run the \algname{Synchronized SGD} method, i.e., $x^{k+1} = x^{k} - \frac{\gamma}{n} \sum_{i=1}^n \nabla f(x^k;\xi^{k}_i),$ where $\gamma = \Theta(\min\{\nicefrac{1}{L}, \nicefrac{\varepsilon n}{L \sigma^2}\}),$ $\{\xi^{k}_i\}$ are i.i.d., and $\{\nabla f(x^k;\xi^{k}_i)\}$ are computed in parallel by the workers, which send to the server that aggregates and calculates $x^{k+1}.$ One can show that the time complexity of this method is 
  $\textstyle \cO\bigl(h \product \bigl(\frac{L \Delta}{\varepsilon} + \frac{\sigma^2 L \Delta}{n \varepsilon^2}\bigr)\bigr),$
because it requires $\cO\bigl(\frac{L \Delta}{\varepsilon} + \frac{\sigma^2 L \Delta}{n \varepsilon^2}\bigr)$ iterations \citep{lan2020first}, and each iteration takes at most $h$ seconds due to Assumption~\ref{ass:time}. Moreover, this result is \emph{optimal and can not be improved} \citep{arjevani2022lower,tyurin2023optimal}.

\textit{Observation 1:} One obvious but important observation is that the second ``statistical term'' in the complexity bound scales with $n$. The larger the number of workers $n$, the smaller the overall time complexity of \algname{Synchronized SGD}, with a linear improvement in $n$. This is a theoretical justification for the importance of distributed optimization and the use of many workers.

\textbf{2. Worker-to-server communication can not be ignored.} For now, consider the setup where communication from workers to the server takes $\tau_{\textnormal{w}} > 0$ seconds, while communication from the server to the workers is free, i.e., $\tau_{\textnormal{s}} = 0.$ In this scenario, the described version of \algname{Synchronized SGD} has a suboptimal $\cO\bigl(h \product \bigl(\frac{L \Delta}{\varepsilon} + \frac{\sigma^2 L \Delta}{n \varepsilon^2}\bigr) + {\color{mydarkred}\tau_{\textnormal{w}} \product d \product \bigl(\frac{L \Delta}{\varepsilon} + \frac{\sigma^2 L \Delta}{n \varepsilon^2}\bigr)}\bigr)$ time complexity, because it takes $h$ seconds to calculate a stochastic gradient and $\tau_{\textnormal{w}} \product d$ seconds to send the stochastic gradients of size $d$ to the server, which calculates $x^{k+1}.$ However, if we slightly modify this method and consider \algname{Batch Synchronized SGD}:
\begin{align}
  \phantom{\textnormal{\algname{Batch Synchronized SGD}}}
  \textstyle x^{k+1} = x^{k} - \frac{\gamma}{n} \sum\limits_{i=1}^n \frac{1}{b} \sum\limits_{j=1}^{b} \nabla f(x^k;\xi^{k}_{ij})
  \tag{\algname{Batch Synchronized SGD}}
  \label{eq:minibatchbatch}
\end{align}
 with $b = \Theta(\nicefrac{\sigma^2}{\varepsilon n})$ and $\gamma = \Theta(\nicefrac{1}{L}),$ then the time complexity becomes 
\begin{align}
  \label{eq:time_minibatch_sgd}
  \textstyle \cO\left(h \product \left(\frac{L \Delta}{\varepsilon} + \frac{\sigma^2 L \Delta}{n \varepsilon^2}\right) + {\color{mydarkred}\tau_{\textnormal{w}} \product d  \product \frac{L \Delta}{\varepsilon}}\right),
\end{align}
because the number of iterations reduces to $\cO\left(\nicefrac{L \Delta}{\varepsilon}\right).$ In other words, each worker, instead of immediately sending a gradient, locally aggregates a batch of size $b$ to reduce the number of communications. It turns out that the last complexity can be improved further with the help of unbiased compressors:
\begin{definition}
\label{def:unbiased_compression}
A mapping $\cC\,:\,\R^d \times \mathbb{S}_{\nu} \rightarrow \R^d$ with a distribution $\mathcal{D}_{\nu}$ is an \textit{unbiased compressor} if there exists $\omega \geq 0$ such that
$\mathbb{E}_{\nu}[\cC(x;\nu)] = x$ and $\mathbb{E}_{\nu}[\norm{\cC(x;\nu) - x}^2] \leq \omega \norm{x}^2$
for all $x \in \R^d$. We $\mathbb{U}(\omega)$ denote the family of such compressors. The community uses the shorthand $\cC(x;\nu) \equiv \cC(x),$ which we also follow.
\end{definition}
A standard example of an unbiased compressor is Rand$K$ $\in \mathbb{U}(\nicefrac{d}{K} - 1),$ which selects $K$ random coordinates of the input vector $x$, scales them by $\nicefrac{d}{K}$, and sets the remaining coordinates to zero (see Def.~\ref{def:rand_k}). Numerous other examples of unbiased compressors have been explored in the literature \citep{beznosikov2020biased,xu2021grace,horvoth2022natural,szlendak2021permutation}.
Using the seminal ideas \citep{Seide2014}, we can construct a modified version of \algname{QSGD} \citep{alistarh2017qsgd} (special case of \algname{Shadowheart SGD} from \citep{tyurin2024shadowheart}), which we call \algname{Batch QSGD}: 
\begin{align}
\phantom{\textnormal{\algname{Batch QSGD}}}
\textstyle x^{k+1} = x^{k} - \frac{\gamma}{n b m} \sum\limits_{i=1}^n \sum\limits_{k=1}^m \cC_{ik}\left(\sum\limits_{j=1}^{b} \nabla f(x^k;\xi^{k}_{ij})\right),
\label{eq:mTyVY}\tag{\algname{Batch QSGD}}
\end{align}
where worker $i$ sends $m$ compressed vectors $\{\cC_{ik}(\cdot)\}_{k \in [m]}$ to the server, which aggregates and calculates $x^{k+1}.$ With Rand$K$ and proper parameters\footnote{$b = \Theta(\frac{t^*}{h}),$ $m = \Theta(\frac{t^*}{\tau_{\textnormal{w}}}),$ $t^* = \Theta\left(\max\left\{h, \tau_{\textnormal{w}}, \frac{\tau_{\textnormal{w}} d}{n}, \frac{h \sigma^2}{n \varepsilon}, \sqrt{\frac{d \tau_{\textnormal{w}} h \sigma^2}{n \varepsilon}}\right\}\right),$ $\gamma = \Theta(\frac{1}{L}),$ $K = 1$ in Rand$K$} 
\citep{tyurin2024shadowheart}, we can improve \eqref{eq:time_minibatch_sgd} to
\begin{align}
  \label{eq:shadowheart}
  \textstyle \cO\left(h \product \left(1 + \frac{\sigma^2}{n \varepsilon}\right) \frac{L \Delta}{\varepsilon} + \tau_{\textnormal{w}} \product \left(\frac{d}{n} + 1\right) \product \frac{L \Delta}{\varepsilon} + \sqrt{\frac{d \tau_{\textnormal{w}} h \sigma^2}{n \varepsilon}} \product \frac{L \Delta}{\varepsilon} \right).
\end{align}
\textit{Observation 2:} As in \textit{Observation 1,} unlike \eqref{eq:time_minibatch_sgd}, the time complexity \eqref{eq:shadowheart} scales with the number of workers $n,$ which once again justifies the use of many workers in the optimization of \eqref{eq:main_problem}. The ``statistical term'' $\nicefrac{h \sigma^2 L \Delta}{n \varepsilon^2}$ and the ``communication term'' $\nicefrac{\tau_{\textnormal{w}} d L \Delta}{n \varepsilon}$ improve linearly with $n,$ while the ``coupling term'' $\sqrt{\nicefrac{d \tau_{\textnormal{w}} h \sigma^2}{n \varepsilon}} \product \nicefrac{L \Delta}{\varepsilon}$ improves with the square root of $n,$ which can reduce the effect of $d$ and $\nicefrac{\sigma^2}{\varepsilon}$ for reasonably large $n.$

A high-level explanation for why the dependence on $d$ improves with $n$ is that all workers use i.i.d. and unbiased compressors $\{\cC_{ik}\},$ which allow them to collaboratively explore more coordinates. This effect is similar to \algname{Synchronized SGD}, where the variance ${\rm \mathbb{E}}_{\xi}[\|\frac{1}{n} \sum_{i=1}^n \nabla f(x;\xi^{k}_i) - \nabla f(x)\|^2] \leq \frac{\sigma^2}{n}$ also improves with $n.$ There are many other compressed methods that also improve with $n,$ including \algname{DIANA}~\citep{DIANA}, \algname{Accelerated DIANA}~\citep{ADIANA}, \algname{MARINA}~\citep{MARINA}, \algname{DASHA}~\citep{tyurin2022dasha}, and \algname{FRECON} \citep{zhao2021faster}.

\textbf{3. Both communications can not be ignored.} Consider a more practical scenario, and our main point of interest, where the communication time from the server to the workers is $\tau_{\textnormal{s}} > 0.$ In this case, \refalgone{eq:mTyVY} requires 
\begin{align}
  \textstyle \cO\left(h \product \left(1 + \frac{\sigma^2}{n \varepsilon}\right) \frac{L \Delta}{\varepsilon} + \tau_{\textnormal{w}} \product \left(\frac{d}{n} + 1\right) \product \frac{L \Delta}{\varepsilon} + \sqrt{\frac{d \tau_{\textnormal{w}} h \sigma^2}{n \varepsilon}} \product \frac{L \Delta}{\varepsilon} + {\color{mydarkred}\tau_{\textnormal{s}} d \frac{L \Delta}{\varepsilon}} \right)
  \label{eq:ANMHmYlWqhqXoLMHCJ}
\end{align}
seconds because the server has to send $x^k \in \R^d$ of size $d$ to the workers in every iteration. 

\textit{Observation 3:} If $\tau_{\textnormal{s}} \simeq \tau_{\textnormal{w}}$, then \eqref{eq:ANMHmYlWqhqXoLMHCJ} asymptotically equals $\cO\left(h \product \left(\nicefrac{L \Delta}{\varepsilon} + \nicefrac{\sigma^2 L \Delta}{n \varepsilon^2}\right) + \tau_{\textnormal{s}} d \nicefrac{L \Delta}{\varepsilon} \right),$ reducing to \eqref{eq:time_minibatch_sgd}, as in the method that does not compress at all! The ``communication term'' $\tau_{\textnormal{s}} d \nicefrac{L \Delta}{\varepsilon}$ \textbf{does not} improve with $n.$

We now arrive at our \textbf{main research question}:
\begin{quote}
  In the first case (\textbf{1. Communication is free}) and the second case (\textbf{2. Worker-to-server communication can not be ignored}), it is possible to design a method that scales the complexity with the number of workers $n,$ while improving the dependencies on $d$ and $\nicefrac{\sigma^2}{\varepsilon}.$ 

  Can we design a similarly efficient method for the third case (\textbf{3. Both communications can not be ignored}) using unbiased compressors, where the communication time from the server to the workers cannot be ignored, and where the dependence on both $d$ and $\nicefrac{\sigma^2}{\varepsilon}$ improves with $n,$ either linearly or with the square root of $n$?

  At least, can this be achieved in the simplest homogeneous setting, where all workers have access to the same function, a scenario that arguably represents the simplest form of distributed optimization?
\end{quote}
We know for certain that the answer is ``No'' in the \emph{heterogeneous case}, due to the result of \citet{gruntkowskaimproving}, who proved that the iteration complexity does not improve with the number of workers $n$ under Assumptions~\ref{ass:lipschitz_constant} and \ref{ass:lower_bound}. However, the homogeneous setting is ``easier,'' giving us hope that the workers can exploit the fact that they all have access to the same distribution.

\subsection{Contributions}
$\spadesuit$ \textbf{Lower bound.} Surprisingly, the answer is ``No'' to our \textbf{main research question}, even in the \emph{homogeneous case}. We prove the following theorem.
\begin{theorem}[Informal Formulation of Theorems~\ref{thm:main} and \ref{thm:main_two}]
  \label{thm:main_informal_combined}
  Let Assumptions~\ref{ass:lipschitz_constant}, \ref{ass:lower_bound}, \ref{ass:stochastic_variance_bounded}, and \ref{ass:time} hold. It is infeasible to find an $\varepsilon$--stationary point faster than
  \begin{align}
    \textstyle \tilde{\Omega}\left(\min\left\{h \product \left(\frac{\sigma^2}{n \varepsilon} + 1\right) \frac{L \Delta}{\varepsilon} + \tau_{\textnormal{w}} \product \left(\frac{d}{n} + 1\right) \product \frac{L \Delta}{\varepsilon} + \sqrt{\frac{d \tau_{\textnormal{w}} h \sigma^2}{n \varepsilon}} \product \frac{L \Delta}{\varepsilon} + {\color{mydarkred}\tau_{\textnormal{s}} d \frac{L \Delta}{\varepsilon}}, h \frac{L \Delta}{\varepsilon} + {\color{mydarkred}h \frac{\sigma^2 L \Delta}{\varepsilon^2}}\right\} \right)
    \label{eq:oaFCGEGjNFVvi}
  \end{align}
  seconds (up to logarithmic factors), using unbiased compressors (Def.~\ref{def:unbiased_compression}) based on random sparsification, for all $L, \Delta, \varepsilon, n, \sigma^2, d, \tau_{\textnormal{w}}, \tau_{\textnormal{s}}, h > 0$ such that $L \Delta \geq \tilde{\Theta}(\varepsilon)$ and dimension $d \geq \tilde{\Theta}\left(\nicefrac{L \Delta}{\varepsilon}\right).$
\end{theorem}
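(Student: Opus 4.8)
The plan is to exhibit one hard instance --- an $L$-smooth nonconvex $f:\R^d\to\R$ together with a stochastic gradient oracle of variance $\le\sigma^2$, satisfying Assumptions~\ref{ass:lipschitz_constant}--\ref{ass:stochastic_variance_bounded} --- on which \emph{every} bidirectional protocol whose server-to-worker and worker-to-server messages are produced by unbiased random-sparsification compressors (the subfamily of Def.~\ref{def:unbiased_compression} whose selected coordinate set is drawn independently of the vector being compressed) needs time \eqref{eq:oaFCGEGjNFVvi} to reach $\mathbb{E}[\|\nabla f(\bar{x})\|^2]\le\varepsilon$, with time measured along the critical path of Assumption~\ref{ass:time}. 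Because the bound is a minimum, I expect Theorems~\ref{thm:main} and \ref{thm:main_two} to capture the two options a method has --- ``spend enough server bandwidth to let many workers help'' versus ``don't'' --- and \eqref{eq:oaFCGEGjNFVvi} to follow by taking whichever is cheaper.

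\textbf{The worst-case function.} First I would take the probabilistic ``zero-chain'' template behind sequential stochastic lower bounds \citep{arjevani2022lower}: $f$ is $L$-smooth with $f(x^0)-f^*\lesssim\varepsilon T$ and $T\asymp\nicefrac{L\Delta}{\varepsilon}$; its $i$-th partial derivative vanishes unless coordinate $i-1$ is already ``activated,'' so reaching an $\varepsilon$-stationary point forces the method to activate $\Omega(T)$ coordinates \emph{in order}; and the stochastic oracle advances the chain by one coordinate, per fresh sample, only with probability $p\asymp\nicefrac{\varepsilon}{\sigma^2}$ (the source of the variance-dependent $\nicefrac{\sigma^2}{\varepsilon}$ factors). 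Two features are essential. (i)~The coordinate ordering is concealed by a uniformly random permutation (re-randomized in the standard way), so a compressor that picks coordinates obliviously cannot locate the currently active index faster than random guessing. (ii)~Before coordinate $i$ is activated its value in every iterate is $0$, so a compressed message that happens to include coordinate $i$ conveys nothing new until \emph{after} that coordinate has been activated --- the server cannot ``pre-broadcast'' the next frontier coordinate. Arranging (ii) together with the right smoothness/variance scaling is, I expect, the genuinely new part of the construction (e.g.\ via padding and a separated activation threshold).

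\textbf{Potential and reduction to a random sum.} Fix a method and run it on the instance; let the potential be the furthest activated chain index held by \emph{any} party. The zero-chain property forces the potential to move from $i$ to $i{+}1$ only in a step where some worker both (a)~holds the (nonzero) value of the current frontier coordinate and (b)~draws a successful sample (probability $\le p$). A worker can come to hold that value either by computing a gradient that reveals it --- which, by induction, lets a \emph{single} worker bootstrap the entire chain from nothing, requiring no server communication at all --- or by receiving a server message, emitted after the coordinate was activated, whose random mask covers index $\pi(i)$. To pool $r$ workers at the same frontier (which is what lowers the gradient cost per advance from $\Theta(\nicefrac{h\sigma^2}{\varepsilon})$ to $\Theta(\nicefrac{h\sigma^2}{r\varepsilon})$), all $r$ of them must have been sent that value. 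Conditioning on the algorithm's history, I would then bound the number of potential increases achievable with a given amount of server traffic by a sum of independent Bernoulli indicators --- one per (server message, recipient) pair, each of success probability $\lesssim\nicefrac{K_{\textnormal{s}}}{d}$ with $K_{\textnormal{s}}$ the message sparsity and cost $\tau_{\textnormal{s}}K_{\textnormal{s}}$ --- together with the already-understood worker-side contributions of \citet{tyurin2024shadowheart}, which supply the $h(1+\nicefrac{\sigma^2}{n\varepsilon})\nicefrac{L\Delta}{\varepsilon}$, $\tau_{\textnormal{w}}(1+\nicefrac{d}{n})\nicefrac{L\Delta}{\varepsilon}$ and coupling terms and only grow once $\tau_{\textnormal{s}}>0$.

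\textbf{Concentration and the dichotomy (the hard step).} The obstacle is that the masks, the per-message sparsities, the batch sizes and the stopping time are all chosen \emph{adaptively} (from everything observed, but not from the hidden permutation), so I would need a Freedman/Bernstein-type martingale concentration bound for the resulting random sum, strong enough to conclude that any method stopping ``too early'' fails with constant probability. The bound should show that producing all $T=\Omega(\nicefrac{L\Delta}{\varepsilon})$ frontier advances forces one of two costs: if a nonempty pool of workers is kept synchronized with the advancing frontier, then each newly activated coordinate must be driven through the oblivious server sparsifier to the pool, which --- since the index is uniform and hidden, and the broadcast is useless before activation --- takes $\tilde\Omega(d)$ coordinate-slots of server traffic per coordinate and hence $\tilde\Omega(\tau_{\textnormal{s}}d\,\nicefrac{L\Delta}{\varepsilon})$ in total (the first red term); otherwise the pool is left behind, at most $O(1)$ workers hold the frontier at each advance, the $n$ per-window samples cannot be combined to cut variance, and the run costs $\tilde\Omega(\nicefrac{h\sigma^2}{\varepsilon})$ per coordinate, i.e.\ $\tilde\Omega(h\,\nicefrac{\sigma^2 L\Delta}{\varepsilon^2})$ in total (the second red term); a short convexity argument over the fraction of coordinates handled in each mode interpolates to the $\min$. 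Combining the two branches with the inherited $h(\cdot)$, $\tau_{\textnormal{w}}(\cdot)$ and coupling terms, and finally checking that the construction meets Assumptions~\ref{ass:lipschitz_constant}--\ref{ass:time} with the stated ranges $L\Delta\gtrsim\varepsilon$ and $d\gtrsim\nicefrac{L\Delta}{\varepsilon}$ (so that $T$ coordinates fit in $d$ dimensions), yields \eqref{eq:oaFCGEGjNFVvi}.
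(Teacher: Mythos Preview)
Your proposal has a genuine gap that the paper explicitly identifies and then resolves with a new construction you do not have.

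You propose the standard probabilistic zero-chain of \citet{arjevani2022lower}, hidden behind a random permutation, together with a dichotomy: either the server keeps a pool of workers at the frontier (paying $\tilde\Omega(\tau_{\textnormal{s}} d)$ per chain step), or only $O(1)$ workers hold the frontier and no variance reduction occurs. The flaw is that under Protocol~\ref{alg:simplified_time_multiple_oracle_protocol} the server can send \emph{different} random-sparsified messages to the $n$ workers in parallel, and with the standard chain it suffices for \emph{any one} worker to receive the single frontier coordinate in order to advance (Lemma~\ref{lemma:worst_function_1} needs only $x_{i-1}\neq 0$). In one wall-clock round of cost $\tau_{\textnormal{s}}$, each worker receives an independent uniform coordinate, so the frontier is hit by some worker with probability $\approx n/d$; the expected server time per advance is therefore $\tau_{\textnormal{s}} d/n$, not $\tau_{\textnormal{s}} d$. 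The paper states exactly this in Section~\ref{sec:failure} (see \eqref{eq:FCaiJRrquGxzzDVAsn}): with the standard construction one can only prove $\tau_{\textnormal{s}}\sum_{j=1}^{T}\min_{i\in[n]}\eta_{ji}\gtrsim \tau_{\textnormal{s}}\frac{d}{n}\frac{L\Delta}{\varepsilon}$, which scales with $n$ and cannot yield the red $\tau_{\textnormal{s}} d\,\nicefrac{L\Delta}{\varepsilon}$ term. Your permutation does not help, because the compressors already sample uniformly; the issue is not that the algorithm can guess the frontier index, but that $n$ parallel independent guesses find it $n$ times faster.

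The paper's fix is structural: it replaces $F_T$ by a new function $F_{T,K,a}$ (eq.~\eqref{eq:worst_case}) in which the $i$-th block is $-\Psi_a(x_{i-K})\cdots\Psi_a(x_{i-1})\Phi(x_i)$, so that advancing past position $i$ requires the \emph{entire window} $x_{i-K+1},\dots,x_i$ to be nonzero (Lemma~\ref{lemma:worst_function_new}). Now a worker that catches one lucky coordinate from the server cannot advance; it must accumulate $\Theta(K)$ specific coordinates. The relevant random time becomes $\sum_{b}\min_{i\in[n]}\min\{h\sum_{k\le K/2}\eta_{b,i,k},\,\tau_{\textnormal{s}}\sum_{k\le K/2}\mu_{b,i,k}\}$, and the minimum over $n$ of a sum of $K/2$ geometric-like variables degrades only by $n^{2/K}$ rather than by $n$ (Lemma~\ref{lemma:main_concet_lemma}). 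Choosing $K=\Theta(\log n)$ makes $n^{2/K}=O(1)$; the extra parameter $a=1+1/K$ in $\Psi_a$ is needed to keep the smoothness constant and $\Delta^0$ from blowing up exponentially in $K$ (Lemma~\ref{lemma:worst_function_properties}). This is the missing idea; without it your dichotomy's first branch collapses to $\tau_{\textnormal{s}} d/n$ and the claimed $\min$ in \eqref{eq:oaFCGEGjNFVvi} is not attained.
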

Because of the $\min$, the bound shows that it is possible to improve either the dependence on $d$ or the dependence on $\nicefrac{\sigma^2}{\varepsilon}$ as the number of workers $n$ increases, but not both simultaneously. The lower bound is matched either by \refalgone{eq:mTyVY} or by the non-distributed \algname{SGD} method (without any communication or cooperation). Moreover, if $\tau_{\textnormal{s}} \simeq \tau_{\textnormal{w}},$ the lower bound becomes $\tilde{\Omega}\left(\min\left\{h \product \left(\frac{\sigma^2}{n \varepsilon} + 1\right) \frac{L \Delta}{\varepsilon} + {\color{mydarkred}\tau_{\textnormal{s}} d \frac{L \Delta}{\varepsilon}}, h \frac{L \Delta}{\varepsilon} + {\color{mydarkred}h \frac{\sigma^2 L \Delta}{\varepsilon^2}}\right\} \right),$ which can be matched by \refalgone{eq:minibatchbatch} with the complexity \eqref{eq:time_minibatch_sgd} (without compression) or by the non-distributed \algname{SGD} method. In other words, if $\tau_{\textnormal{s}} \simeq \tau_{\textnormal{w}},$ then using methods with random sparsification compression in the distributed centralized setting offers no advantage. However, if $\tau_{\textnormal{s}} \lesssim \tau_{\textnormal{w}},$ the compression techniques can help on the workers side in the regimes when $\nicefrac{\tau_{\textnormal{w}} d}{n} + \sqrt{\nicefrac{d \tau_{\textnormal{w}} h \sigma^2}{n \varepsilon}}$ is larger than ${\color{mydarkred}\tau_{\textnormal{s}} d}$, due to the former scaling with $n$.

$\clubsuit$ \textbf{New ``worst-case'' function.} To prove the lower bound, as we explain in Section~\ref{sec:failure}, we needed a new ``worst-case'' function construction (see Section~\ref{sec:new_worst}). We designed a new function $F_{T,K,a}$ in \eqref{eq:worst_case}, which extends the ideas by \citet{carmon2020lower}. Proving its properties in Lemmas~\ref{lemma:worst_function_new} and \ref{lemma:worst_function_properties}, as well as designing the function itself, can be an important contribution on its own.

$\vardiamond$ \textbf{Proof technique.} Using the new function, we develop a new proof technique and explain how the problem of establishing the lower bound reduces to a statistical problem (see Section~\ref{sec:main}), where we need to prove a concentration bound for a special sum \eqref{eq:CswajCMIarwEL}, which represents the minimal possible random time required to find an $\varepsilon$--stationary point. Combining this result with the proven properties, we obtain our  main result \eqref{eq:UQbAtHSoFgrFOdZpFb}.

$\varheart$ \textbf{Improved analysis when $\tau_{\textnormal{w}} > 0.$} To obtain the complete lower bound, we extended and improved the result by \citet{tyurin2024shadowheart}, which was limited for our scenario and required additional modifications to finally obtain \eqref{eq:oaFCGEGjNFVvi} (see Sections~\ref{sec:combined_app} and \ref{sec:combined} for details).

\section{Preliminaries}
For better comprehension of our new idea, we now present arguably one of the most important worst-case functions by \citet{carmon2020lower}, which is widely used to prove lower bounds in nonconvex optimization. It has been used by \citet{arjevani2022lower,arjevani2020second} to derive lower bounds in the stochastic setting, by \citet{lu2021optimal} in the decentralized setting, by \citet{tyurin2023optimal,tyurin2024optimal,tyurin2024shadowheart} in the asynchronous setting, by \citet{huang2022lower} to show the lower iteration bound for unidirectional compressed methods, and by \citet{li2021complexity} in problems with a nonconvex-strongly-concave structure.

For any $T \in \N,$ \citet{carmon2020lower} define $F_T \,:\, \R^T \to \R$ such that \footnote{similarly $F_T(x) \eqdef -\Psi(1) \Phi(x_1) + \sum_{i=2}^T \left[\Psi(-x_{i-1})\Phi(-x_i) - \Psi(x_{i-1})\Phi(x_i)\right]$ because $\Psi(-1) = 0.$}
\begin{align}
    \label{eq:worst_case_prev}
    \textstyle F_T(x) \eqdef \sum\limits_{i=1}^T \left[\Psi(-x_{i-1})\Phi(-x_i) - \Psi(x_{i-1})\Phi(x_i)\right],
\end{align}
where $x_0 \equiv 1,$ $x_i$ is the $i$\textsuperscript{th} coordinate of $x \in \R^T,$
\begin{align}
    \Psi(x) = \begin{cases}
        0, & x \leq 1/2, \\
        \exp\left(1 - \frac{1}{(2x - 1)^2}\right), & x \geq 1/2,
    \end{cases}
    \quad\textnormal{and}\quad
    \Phi(x) = \sqrt{e} \int_{-\infty}^{x}e^{-\frac{1}{2}t^2}dt.
    \label{eq:BLHrRTz}
\end{align}
Notice that this function has a ``chain--like'' structure. If a method starts from $x^0 = 0$ and computes the gradient of $F_T,$ then the gradient will have a non-zero value only in the first coordinate (use that $\Psi(0) = \Psi'(0) = 0$). Thus, by computing a single gradient, any ``reasonable'' method can ``discover'' at most one coordinate. At the same time, if the method wants to find an $\varepsilon$--stationary point, it should eventually discover the $T$\textsuperscript{th} coordinate. These two facts imply that every ``reasonable'' method should compute the gradient of $F_T$ at least $T$ times. In the construction, \citet{carmon2020lower} take $T = \Theta\left(\frac{L \Delta}{\varepsilon}\right).$ This construction is a ``more technical'' version of the celebrated quadratic optimization construction from \citep{nesterov2018lectures}, which has similar properties.
Let us define $\textnormal{prog}(x) \eqdef \max \{i~\geq~0\,|\,x_i \neq 0\} \,\, (x_0 \equiv 1),$ then the following lemma is a formalization of the described properties.
\begin{lemma}[\cite{carmon2020lower}]
    \label{lemma:worst_function_1}
    The function $F_T$ satisfies:
    \begin{enumerate}
        \item For all $x \in \R^T,$ $\textnormal{prog}(\nabla F_T(x)) \leq \textnormal{prog}(x) + 1.$
        \item For all $x \in \R^T,$ if $\textnormal{prog}(x) < T,$ then $\norm{\nabla F_T(x)} > 1.$
    \end{enumerate}
\end{lemma}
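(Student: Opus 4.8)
The plan is to establish both items by computing the coordinatewise partial derivatives of $F_T$ and then invoking two elementary facts about the building blocks: $\Psi(0)=\Psi'(0)=0$ (indeed $\Psi$ vanishes identically on $(-\infty,1/2]$), and $\Psi(t)\ge\Psi(1)=1$ for every $t\ge 1$ together with $\Phi'(t)=\sqrt{e}\,e^{-t^2/2}>\sqrt{e}\,e^{-1/2}=1$ whenever $|t|<1$.

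The first step is to note that, since $x_j$ enters \eqref{eq:worst_case_prev} only through the summands $i=j$ and $i=j+1$,
\begin{align*}
\nabla_j F_T(x) = {}&-\Psi(-x_{j-1})\Phi'(-x_j)-\Psi(x_{j-1})\Phi'(x_j)\\
&-\Psi'(-x_j)\Phi(-x_{j+1})-\Psi'(x_j)\Phi(x_{j+1}),
\end{align*}
where the last two terms are absent when $j=T$. All four terms are nonpositive because $\Psi,\Psi',\Phi,\Phi'\ge 0$, which gives the bound $|\nabla_j F_T(x)|\ge\Psi(-x_{j-1})\Phi'(-x_j)+\Psi(x_{j-1})\Phi'(x_j)$ that item~2 will rest on.

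Item~1 is then immediate: if $j\ge\textnormal{prog}(x)+2$, all three of $x_{j-1},x_j,x_{j+1}$ vanish, so every factor of the form $\Psi(\pm x_{j-1})$ or $\Psi'(\pm x_j)$ above equals $0$, hence $\nabla_j F_T(x)=0$ and $\textnormal{prog}(\nabla F_T(x))\le\textnormal{prog}(x)+1$. For item~2 the crux is the observation that if $|x_{j-1}|\ge 1$ and $|x_j|<1$ with $1\le j\le T$, then in the above lower bound the term attached to whichever of $x_{j-1},-x_{j-1}$ is $\ge 1$ is at least $\Psi(1)\,\Phi'(\pm x_j)=\Phi'(\pm x_j)>1$ (using that $\Phi'$ is even), so $\norm{\nabla F_T(x)}\ge|\nabla_j F_T(x)|>1$. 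To finish, given $\textnormal{prog}(x)=p<T$, I would take $j$ to be the least index in $\{1,\dots,p+1\}$ with $|x_j|<1$; such an index exists because $x_{p+1}=0$ and $p+1\le T$, and by minimality together with $x_0\equiv 1$ we have $|x_0|,\dots,|x_{j-1}|\ge 1$, so the observation applies.

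This is essentially a re-derivation of the hard instance of \citet{carmon2020lower}, so there is no deep obstacle. The one genuinely delicate point is that item~2 must hold for \emph{every} $x\in\R^T$, not merely for points a reasonable method could reach: one cannot simply inspect coordinate $\textnormal{prog}(x)+1$, whose gradient entry can vanish when $|x_{\textnormal{prog}(x)}|\le 1/2$. The device of passing to the first index (below $p+1$) at which $|x_j|<1$ is exactly what makes the estimate uniform in $x$; the remaining case analysis (the $j=T$ boundary, and $p=0$, where $j=1$ and the bound is carried by $x_0=1$) is routine.
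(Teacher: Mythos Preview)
Your proof is correct. The paper itself does not prove Lemma~\ref{lemma:worst_function_1}---it is quoted from \citet{carmon2020lower}---but your argument mirrors exactly the approach the paper takes when proving the generalizations Lemmas~\ref{lemma:prog} and~\ref{lemma:worst_function_general} for $F_{T,K,a}$: compute the partial derivative explicitly, use $\Psi(0)=\Psi'(0)=0$ to kill all terms beyond $\textnormal{prog}(x)+1$ for item~1, and for item~2 locate the first coordinate $j$ with $|x_j|<1$ while $|x_{j-1}|\ge 1$, then use $\Psi(1)=1$ and $\Phi'(t)>1$ on $(-1,1)$ to force $|\nabla_j F_T(x)|>1$. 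The only structural difference is that for $F_{T,K,a}$ the paper drops the $\Psi(-x_{i-1})\Phi(-x_i)$ block and instead introduces $\Gamma$ to handle the case $x_j\le -1$, whereas you (correctly, for the original $F_T$) exploit the symmetric term $\Psi(-x_{j-1})\Phi'(-x_j)$ to cover negative $x_{j-1}$ uniformly.
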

Actually, in most proofs, the structure of \eqref{eq:worst_case_prev} is not needed, and it is sufficient to work with Lemmas~\ref{lemma:worst_function_1} and Lemma~\ref{lemma:worst_function} from below, where the latter allows us to show that a scaled version of $F_T$ satisfies Assumptions~\ref{ass:lipschitz_constant} and \ref{ass:lower_bound}.
\begin{lemma}[\cite{carmon2020lower}]
    \label{lemma:worst_function}
    The function $F_T$ satisfies:
    \begin{enumerate}
        \item $F_T(0) - \inf_{x \in \R^T} F_T(x) \leq \Delta^0 T,$ where $\Delta^0 \eqdef 12.$
        \item The function $F_T$ is $\ell_1$--smooth, where $\ell_1 \eqdef 152.$
        \item For all $x \in \R^T,$ $\norm{\nabla F_T(x)}_{\infty} \leq \gamma_{\infty},$ where $\gamma_{\infty} \eqdef 23.$
    \end{enumerate}
\end{lemma}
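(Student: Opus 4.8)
The statement is a classical fact about the Carmon--Duchi--Hinder--Sidford hard instance, and the plan is to reduce all three items to a small set of one-dimensional estimates on $\Psi$, $\Phi$ and their first two derivatives, together with the single structural observation that $\Psi$ vanishes on $(-\infty,1/2]$. First I would record: $\Psi$ and $\Phi$ are $C^\infty$ (for $\Psi$ this is the standard gluing fact that $\exp(1-(2x-1)^{-2})$ and all its derivatives tend to $0$ as $x\downarrow 1/2$); $0\le\Psi<e$ with $\Psi\equiv 0$ on $(-\infty,1/2]$; $0\le\Psi'\le\sqrt{54/e}$ (attained at $2x-1=\sqrt{2/3}$); $|\Psi''|\le c_2$ for an explicit $c_2\lesssim 32.5$; and $0<\Phi<\sqrt{2\pi e}$, $0<\Phi'\le\sqrt e$, $|\Phi''(x)|=\sqrt e\,|x|e^{-x^2/2}\le 1$. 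The consequence I will use repeatedly is that for every $u$ at most one of $\Psi(u),\Psi(-u)$ is nonzero (and likewise for $\Psi'$ and $\Psi''$), since $u>1/2$ and $-u>1/2$ cannot both hold; so whenever a sum contains a ``positive-branch'' term $\Psi^{(k)}(u)$ and a ``negative-branch'' term $\Psi^{(k)}(-u)$, only one survives.

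For item~1, since $\Psi,\Phi\ge 0$ I drop the nonnegative summands $\Psi(-x_{i-1})\Phi(-x_i)$ to get $F_T(x)\ge-\sum_{i=1}^T\Psi(x_{i-1})\Phi(x_i)\ge -T\,e\sqrt{2\pi e}$, while $F_T(0)=-\Phi(0)=-\sqrt{\pi e/2}<0$ by direct substitution ($\Psi(-1)=0$, $\Psi(1)=1$, and every summand with $i\ge 2$ vanishes since $\Psi(0)=0$). Hence $F_T(0)-\inf_{x\in\R^T}F_T(x)\le e\sqrt{2\pi e}\,T<12\,T$, using $e\sqrt{2\pi e}=e^{3/2}\sqrt{2\pi}\approx 11.24$.

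For items~2 and~3 I would differentiate the chain explicitly. The partial derivative $\partial_{x_j}F_T(x)$ is a sum of four terms of the shapes $\Psi(\mp x_{j-1})\Phi'(\mp x_j)$ and $\Psi'(\mp x_j)\Phi(\mp x_{j+1})$, of which at most two are nonzero by the disjoint-support property, so $|\partial_{x_j}F_T(x)|\le \sup\Psi\cdot\sup\Phi'+\sup\Psi'\cdot\sup\Phi = e^{3/2}+\sqrt{108\pi}<23$ --- this is item~3. For item~2 note that $\nabla^2 F_T(x)$ is tridiagonal, since each summand of $F_T$ couples only $x_{i-1}$ and $x_i$; a diagonal entry consists of one $\Psi\cdot\Phi''$ pair and one $\Psi''\cdot\Phi$ pair, hence (again discarding half the terms via disjoint supports) $|(\nabla^2 F_T)_{jj}|\le e\cdot 1+\sup|\Psi''|\cdot\sqrt{2\pi e}$, while every off-diagonal entry is a $\Psi'\cdot\Phi'$ pair and so $|(\nabla^2 F_T)_{j,j\pm1}|\le\sup|\Psi'|\cdot\sup|\Phi'|=\sqrt{54}$. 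Gershgorin's circle theorem then gives $\norm{\nabla^2 F_T(x)}_{\mathrm{op}}\le\bigl(e+\sup|\Psi''|\,\sqrt{2\pi e}\bigr)+2\sqrt{54}\le 152$ once $\sup|\Psi''|\lesssim 32.5$; since $F_T\in C^2$ this uniform Hessian bound is exactly $\ell_1$-smoothness.

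The one genuinely delicate estimate is the bound on $|\Psi''|$, which I expect to be the main obstacle. Setting $\psi(x):=1-(2x-1)^{-2}$ and $t:=(2x-1)^{-2}$, one computes $\Psi''(x)=\Psi(x)\bigl[(\psi'(x))^2+\psi''(x)\bigr]=8e\,e^{-t}t^2(2t-3)$; the naive bound $|(\psi')^2+\psi''|\le(\psi')^2+|\psi''|=8t^2(2t+3)$ destroys the near-cancellation between $16t^3$ and $-24t^2$ and would push $\ell_1$ well above $152$. Instead one maximizes $t\mapsto e^{-t}t^2|2t-3|$ directly; its critical points solve $2t^2-9t+6=0$, giving $t_\pm=\tfrac{9\pm\sqrt{33}}{4}$ and $\sup_x|\Psi''(x)|=8e\,e^{-t_+}t_+^2(2t_+-3)\approx 32.4<32.5$. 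This is precisely what forces the tightness of the constants $12$, $152$, $23$; everything else in the argument is bookkeeping. (These three properties are exactly what is needed downstream: differentiability and $\ell_1$-smoothness let a rescaled copy of $F_T$ meet Assumptions~\ref{ass:lipschitz_constant}--\ref{ass:lower_bound}, while the $\norm{\cdot}_\infty$ bound controls per-coordinate magnitudes when stochasticity is embedded into the construction.)
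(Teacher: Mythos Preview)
The paper does not prove this lemma; it is quoted verbatim from \cite{carmon2020lower}, so there is no in-paper proof to compare against. Your argument is correct and is essentially the original Carmon--Duchi--Hinder--Sidford argument: pointwise bounds on $\Psi,\Phi,\Psi',\Phi',\Psi'',\Phi''$, the disjoint-support observation that at most one of $\Psi^{(k)}(u),\Psi^{(k)}(-u)$ is nonzero, and a tridiagonal Gershgorin bound for the Hessian. The numerical estimates all check out, including the delicate one $\sup_x|\Psi''(x)|=8e\,e^{-t_+}t_+^2(2t_+-3)\approx 32.4$ with $t_+=(9+\sqrt{33})/4$, which is indeed what makes $e+\sqrt{2\pi e}\cdot 32.4+2\sqrt{54}\approx 151.3<152$ work.

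Two minor remarks. First, your bound $\sup|\Phi''|=1$ is correct and sharper than the $27$ the paper records in Lemma~\ref{lemma:phi}; you need the sharp value to hit $152$. Second, it is worth noting that the paper \emph{does} prove the analogous statements for its new function $F_{T,K,a}$ (Lemmas~\ref{lemma:delta}, \ref{lemma:bound_grad}, \ref{lemma:smooth}) via the same template --- bound each factor, then bound the banded Hessian --- but there the $\pm$-branch symmetry is absent, so the disjoint-support trick you exploit is unavailable; instead the extra parameter $a\in(1,e]$ is used to tame the constants. Your proof is the $K=1$, $a=e$ specialization of that scheme, with the added disjoint-support refinement that halves the relevant sums.
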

Hence, one of the main results by \citet{carmon2020lower} was to show that it is infeasible to find an $\varepsilon$--stationary point without calculating $\cO\left(\frac{L \Delta}{\varepsilon}\right)$ gradients of a function satisfying Assumptions~\ref{ass:lipschitz_constant} and \ref{ass:lower_bound}. In turn, the classical gradient descent (\algname{GD}) method matches this lower bound.
\subsection{Family of distributed methods}
In our lower bound, we focus on the family of methods described by Protocol~\ref{alg:simplified_time_multiple_oracle_protocol}. This protocol takes an algorithm as input and runs the standard functions of the workers and the server: the workers compute stochastic gradients locally, send compressed information, the server aggregates them asynchronously and in parallel, and sends compressed information back based on the local information. For now, we ignore the communication times from the workers to the server in Protocol~\ref{alg:simplified_time_multiple_oracle_protocol}.

For all $i \in [n],$ the algorithm can choose any point, based on the local information $I_i,$ at which worker~$i$ will start computing a stochastic gradient. It can also choose any point $s^k_i,$ based on the server’s local information $I,$ along with the corresponding compressor $\cC^k_i,$ which will be sent to worker~$i.$ This protocol captures the behavior of virtually any asynchronous optimization process with workers connected to a server. We work with \emph{zero-respecting} algorithms, as defined below.
\begin{definition}
  \label{def:zero}
  We say that an algorithm~$A$ that follows Protocol~\ref{alg:simplified_time_multiple_oracle_protocol} is \emph{zero-respecting} if it does not explore or assign non-zero values to any coordinate unless at least one of the available local vectors contains a non-zero value in that coordinate. The family of such algorithms we denote as $\mathcal{A}_{\textnormal{zr}}.$
\end{definition}
This is the standard assumption \citep{carmon2020lower} that generalizes the family of methods working with the span of vectors \citep{nesterov2018lectures} and holds for the majority of methods, including \algname{GD}, \algname{Adam} \citep{kingma2014adam}, \algname{DORE} \citep{liu2020double}, \algname{EF21-P} \citep{gruntkowska2023ef21}, \algname{MARINA-P}, and \algname{M3} \citep{gruntkowskaimproving}.
\subsection{Previous Lower Bound in the Heterogeneous Setting}
\label{sec:heter}
Let us return back to our main question. In order to show that it is impossible to scale with $n$ in the \emph{heterogeneous setting}, \citet{gruntkowskaimproving} have proposed to use scaled versions of 
\begin{align*}
    \textstyle G_j(x) \eqdef 
        n \times \sum\limits_{1 \leq i \leq T \textnormal{ and } (i - 1) \bmod n = j - 1}^T \left[\Psi(-x_{i-1})\Phi(-x_i) - \Psi(x_{i-1})\Phi(x_i)\right]
\end{align*}
for all $j \in [n],$ worker $i$ has access only a scaled version of $G_i$ for all $i \in [n].$ The idea is that the first block from \eqref{eq:worst_case_prev} belongs to the first worker, the second block to the second worker, $\dots$, and the $(n + 1)$\textsuperscript{th} block to the first worker again, and so on. Notice that $F_T(x) = \frac{1}{n}\sum_{i=1}^{n} G_i(x).$
\begin{protocol}[t]
  \caption{}
  \label{alg:simplified_time_multiple_oracle_protocol}
  \begin{algorithmic}[1]
  \STATE \textbf{Input:} Algorithm $A \in \mathcal{A}_{\textnormal{zr}}$
  \STATE Init $I = \emptyset$ (all available information) on the server
  \STATE Init $I_i = \emptyset$ (all available information) on worker $i$ for all $i \in [n]$
  \STATE Run the following three loops in parallel. The first two on the workers. The third on the server.
  \FOR{$i = 1, \dots, n$ (in parallel on the workers)}
    \FOR{$k = 0, 1, \dots$}
    \STATE Algorithm $A$ calculates a new point $x$ based on local information $I_i$: \hfill (takes $0$ seconds)\\
    any vector $x \in \R^d$ such that $\textnormal{supp}(x) \in \cup_{y \in I_i} \textnormal{supp}(y)$ \hfill ($\textnormal{supp}(v) \eqdef \{\, i \in [d] : v_i \neq 0 \,\}$)
    \STATE Calculate one stochastic gradient \footnotemark $\nabla f(x;\xi),$ $\quad \xi \sim \mathcal{D}_{\xi}$ \, ($\xi$ are i.i.d.) \hfill (takes $h$ seconds)
    \STATE Add $\nabla f(x;\xi)$ to $I_i$ \hfill (takes $0$ seconds)
    \ENDFOR
  \ENDFOR
  \FOR{$i = 1, \dots, n$ (in parallel on the workers)}
  \FOR{$k = 0, 1, \dots$}
    \STATE Algorithm $A$ calculates a new point $\bar{s}^k_i$ based on local information $I_i$: \hfill (takes $0$ seconds) \\
    any vector $\bar{s}^k_i \in \R^d$ such that $\textnormal{supp}(\bar{s}^k_i) \in \cup_{y \in I_i} \textnormal{supp}(y)$
    \STATE Send $\bar{\cC}^k_i(\bar{s}^k_i)$ to the server \\ 
    (takes $\tau_{\textnormal{w}} \times \bar{P}^k_i$ seconds, where $\bar{P}^k_i$ is the number of coordinates retained by $\bar{\cC}^k_i(\bar{s}^k_i)$) 
    \STATE Add to $\bar{\cC}^k_i(\bar{s}^k_i)$ to $I$ \hfill (takes $0$ seconds)
  \ENDFOR
  \ENDFOR
  \FOR{$i = 1, \dots, n$ (in parallel on the server)}
    \FOR{$k = 0, 1, \dots$}
    \STATE Algorithm $A$ calculates a new point $s_i^k$ based on local information $I$: \hfill (takes $0$ seconds)\\
    any vector $s_i^k \in \R^d$ such that $\textnormal{supp}(s_i^k) \in \cup_{y \in I} \textnormal{supp}(y)$
    \STATE Algorithm $A$ compresses the point: $\cC^k_i(s_i^k) \quad \forall i \in [n]$ \hfill (takes $0$ seconds)
    \STATE Send $\cC^k_i(s_i^k)$ to the worker $i$ \\ 
    (takes $\tau_{\textnormal{s}} \times P^k_i$ seconds, where $P^k_i$ is the number of coordinates retained by $\cC^k_i(s_i^k)$)
    \STATE Add to $\cC^k_i(s_i^k)$ to $I_i$ \hfill (takes $0$ seconds)
    \ENDFOR
  \ENDFOR \\
  (a vector may be added to $I$ or $I_i$ at the same time as the algorithm calculates a new point; in this case, the protocol adds the vector first (with no delay since the operation takes $0$ seconds))
  \end{algorithmic}
\end{protocol}

Notice one important property of this construction: only one worker at a time can discover the next coordinate. In other words, if the server sends a new iterate to all workers, only one worker, after computing the gradient, can make progress to the next coordinate.
\footnotetext{i) Multiple queries with the same random variable do not change the lower bound; see Remark~\ref{remark:multiple} in Section~\ref{sec:main_proof}; ii) In the heterogeneous setup (Section~\ref{sec:heter}), worker~$i$ computes $\nabla f_i(x;\xi),$ where $f_i$ is its local function.}

The next step in \citep{gruntkowskaimproving}, in the proof of the lower bound theorem, was to analyze Protocol~\ref{alg:simplified_time_multiple_oracle_protocol}.  
They consider\footnote{In general, they presented a more general setting where the server can zero out coordinates with any probability, capturing not only Rand$K$ with $K = 1$ and $p = \nicefrac{1}{d},$ but also Rand$K$ with $K > 1$ and other compressors.} Rand$K$ with $K = 1.$
Then, since the compressor sends only one coordinate with probability 
$p = \nicefrac{1}{d},$ 
the probability that the server sends the last non-zero coordinate to the worker responsible for the current block of \eqref{eq:worst_case_prev} that can progress to the new coordinate is also $p.$ Thus, the number of \emph{consecutive} coordinates that the server has to send to the workers is at least $\sum_{j=1}^{T} \eta_j,$ where $\eta_j$ is a geometric-like random variable with $\ProbCond{\eta_j = m}{\eta_{j-1}, \dots, \eta_1} \leq p (1 - p)^{m-1}$ for all $m \geq 1.$ Using classical tools from statistical analysis, one can show that $\sum_{j=1}^{T} \eta_j \gtrsim \nicefrac{T}{p} \simeq d \nicefrac{L \Delta}{\varepsilon}$ with high probability. Thus, under Assumption~\ref{ass:time}, the communication time complexity cannot be better than 
  $\textstyle \Omega\left({\color{mydarkred}\tau_{\textnormal{s}} d \nicefrac{L \Delta}{\varepsilon}}\right),$
which does not improve with $n.$
\subsection{Failure of the previous construction in the homogeneous setting}
\label{sec:failure}
However, in the \emph{homogeneous} setting, if we want to reuse the idea, arguably the only option we have is to assign (scaled) $F_T$ to all workers to ensure that they all have the same function. But in this case, the arguments from Section~\ref{sec:heter} no longer apply, because all workers can simultaneously progress to the next coordinate, since they have access to all blocks of \eqref{eq:worst_case_prev}. 

Indeed, if the server sends i.i.d. Rand$K$ compressors with $K = 1,$ then the number of consecutive coordinates that the server has to send before the workers receive the last non-zero coordinate is $\sum_{j=1}^{T} \min\limits_{i \in [n]} \eta_{ji},$ where $\ProbCond{\eta_{ji} = m}{\{\eta_{kj}\}_{k < i}} \leq p (1 - p)^{m-1}$ for all $m, j \geq 1, i \in [n].$ The $\min\limits_{i \in [n]}$ operation appears because it is sufficient to wait for the first ``luckiest'' worker. Analyzing this sum, we can only show that
\begin{align}
  \label{eq:FCaiJRrquGxzzDVAsn}
  \textstyle \tau_{\textnormal{s}} \sum\limits_{j=1}^{T} \min\limits_{i \in [n]} \eta_{ji} \gtrsim \tau_{\textnormal{s}} \frac{d}{n} \frac{L \Delta}{\varepsilon},
\end{align}
with high probability, which scales with $n$ due to $\min.$ 

There are two options: either $\Omega(\tau_{\textnormal{s}} \nicefrac{d L \Delta}{n \varepsilon})$ is tight and it is possible to find a method that matches it, or we need to find another way to improve the lower bound. To prove the latter, we arguably need a different fundamental construction from \eqref{eq:worst_case_prev}, which we propose in the next section.
\section{A New ``Worst-Case'' Function}
\label{sec:new_worst}
In this section, we give a less technical description of our lower bound construction and the main theorem from Section~\ref{sec:lower_bound}. 
Instead of \eqref{eq:worst_case_prev}, we propose to use another ``worst-case'' function. For any $T, K \in \N,$ and $e \geq a > 1,$ we define the function $F_{T,K,a} \,:\, \R^T \to \R$ such that
\begin{align}
  \label{eq:worst_case}
  \textstyle F_{T,K,a}(x) = -\sum\limits_{i=1}^T \Psi_a(x_{i-K}) \dots \Psi_a(x_{i-2}) \Psi_a(x_{i-1})\Phi(x_i) + \sum\limits_{i=1}^T \Gamma(x_i),
\end{align}
\begin{align}
    \Psi_a(x) = \begin{cases}
        0, & x \leq 1/2, \\
        \exp\left(\log a \cdot \left(1 - \frac{1}{(2x - 1)^2}\right)\right), & x > 1/2,
    \end{cases}
    \qquad
    \Phi(x) = \sqrt{e} \int_{-\infty}^{x}e^{-\frac{1}{2}t^2}dt,
    \label{eq:new_psi}
\end{align}
\begin{align*}
  \Gamma(x) = \begin{cases}
    -x e^{1/x + 1},& x < 0, \\
    0, & x \geq 0,
\end{cases}
\end{align*}
and $x_{0} = \dots = x_{-K + 1} \equiv 1.$ The main modification is that instead of the block $- \Psi(x_{i-1})\Phi(x_i),$ we use $-\Psi_a(x_{i-K}) \dots \Psi_a(x_{i-2}) \Psi_a(x_{i-1})\Phi(x_i)$ (ignore $a$ for now). In the previous approach, it was sufficient for a worker to have $x_{i-1} \neq 0$ to discover the next $i$\textsuperscript{th} coordinate. However, in our new construction, the worker needs $x_{i-1} \neq 0, x_{i-2} \neq 0, \dots, x_{i - K + 1} \neq 0$ for that. With this modification, it is not sufficient for the ``luckiest'' worker to get the non-zero $i-1$\textsuperscript{th} coordinate to discover the next coordinate: to progress to the $i$\textsubscript{th} coordinate, the worker should also have non-zero $i-2$\textsuperscript{th}, \dots, $i - K + 1$\textsuperscript{th} coordinates. 

Next, we remove the positive blocks $\Psi(x_{i-1})\Phi(x_i),$ which we believe was introduced to prevent the methods from ascending, exploring negative values of $x_i,$ and finding a nearby stationary point ``above.'' Instead, we introduce $\Gamma(x_i),$ which serves the same purpose: if a method starts exploring negative values, this term prevents it from reaching a stationary point there. Let us define 
\begin{align*}
  \textnormal{prog}^K(x) \eqdef \max \{i \geq 0\,|\,x_i \neq 0, x_{i - 1} \neq 0, \dots, x_{i - K + 1} \neq 0\}.
\end{align*}
Instead of Lemma~\ref{lemma:worst_function_1}, we prove the following lemma:
\begin{lemma}[Lemmas~\ref{lemma:prog} and \ref{lemma:worst_function_general}]
    \label{lemma:worst_function_new}
    The function $F_{T,K,a}$ satisfies:
    \begin{enumerate}
        \item For all $x \in \R^T,$ $\textnormal{supp}(\nabla F_{T,K,a}(x)) \subseteq \{1, \dots, \textnormal{prog}^K(x) + 1\} \cup \textnormal{supp}(x),$ \\ where $\textnormal{supp}(v) \eqdef \{\, i \in [d] : v_i \neq 0 \,\}.$
        \item For all $x \in \R^T,$ if $\textnormal{prog}^K(x) < T,$ then $\norm{\nabla F_{T,K,a}(x)} > 1.$
    \end{enumerate}
\end{lemma}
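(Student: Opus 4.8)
The plan is to reduce both claims to pointwise facts about the one-variable building blocks and then control the sign of each coordinate of the gradient. First I would record the elementary properties I need: $\Psi_a$ is $C^1$ (in fact $C^\infty$), nondecreasing, identically $0$ together with all derivatives on $(-\infty,1/2]$, satisfies $\Psi_a(1)=1$, and has $\Psi_a'\ge 0$ everywhere; $\Phi$ is smooth with $\Phi'(t)=\sqrt{e}\,e^{-t^2/2}>0$ and $\Phi(t)>0$ for all $t$; and $\Gamma$ is $C^1$ with $\Gamma'(t)=0$ for $t\ge 0$, $\Gamma'(t)=e^{1/t+1}(1/t-1)<0$ for $t<0$, and $|\Gamma'(t)|>1$ whenever $t\le -1$. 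Differentiating \eqref{eq:worst_case} (using $x_0=\dots=x_{-K+1}\equiv 1$) gives
\[
\partial_j F_{T,K,a}(x) = -\,\Phi'(x_j)\prod_{s=1}^K\Psi_a(x_{j-s})\;-\;\Psi_a'(x_j)\!\!\sum_{i=j+1}^{\min\{T,j+K\}}\!\!\Phi(x_i)\!\!\prod_{\substack{i-K\le m\le i-1\\ m\ne j}}\!\!\Psi_a(x_m)\;+\;\Gamma'(x_j).
\]
The crucial structural point is that in the dangerous directions these three terms cannot cancel: the middle sum is always $\ge 0$ (a sum of products of the nonnegative quantities $\Psi_a'(x_j)$, $\Psi_a(x_m)$, $\Phi(x_i)$), and $\Gamma'(x_j)\le 0$.

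For Part 1, put $p=\textnormal{prog}^1(x)$, $q=\textnormal{prog}^K(x)$ and take any $j>\max\{q+1,p\}$. Since $j>p$ we have $x_j=0$, hence $\Psi_a'(x_j)=\Psi_a'(0)=0$ and $\Gamma'(x_j)=\Gamma'(0)=0$, which kills the last two terms. For the first term: if $x_{j-K},\dots,x_{j-1}$ were all nonzero, then the window $[j-K,j-1]$ would witness $\textnormal{prog}^K(x)\ge j-1>q$, a contradiction; hence some $x_m=0$ with $m\in\{j-K,\dots,j-1\}$, so $\Psi_a(x_m)=0$ and the product vanishes. Thus $\partial_j F_{T,K,a}(x)=0$ for every $j>\max\{q+1,p\}$, which is precisely $\textnormal{prog}^1(\nabla F_{T,K,a}(x))\le\max\{\textnormal{prog}^K(x)+1,\textnormal{prog}^1(x)\}$.

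For Part 2, assume $\textnormal{prog}^K(x)<T$. Then $x_1,\dots,x_T$ cannot all be $\ge 1$ (the window $[T-K+1,T]$ would otherwise give $\textnormal{prog}^K(x)=T$), so $r\eqdef\min\{i\in[T]:x_i<1\}$ is well defined and $x_m\ge 1$ for all $m\le r-1$ (including $m\le 0$). Since $\Psi_a$ is nondecreasing with $\Psi_a(1)=1$, $\prod_{s=1}^K\Psi_a(x_{r-s})\ge 1$, and using the sign observation (middle sum $\ge 0$, $\Gamma'(x_r)\le 0$),
\[
\partial_r F_{T,K,a}(x)\;\le\;-\,\Phi'(x_r)\prod_{s=1}^K\Psi_a(x_{r-s})+\Gamma'(x_r)\;\le\;-\,\Phi'(x_r)+\Gamma'(x_r)\;=\;-\bigl(\sqrt{e}\,e^{-x_r^2/2}+|\Gamma'(x_r)|\bigr).
\]
It then remains to verify the elementary inequality $\sqrt{e}\,e^{-t^2/2}+|\Gamma'(t)|>1$ for every $t<1$: for $t\in(-1,1)$ the first summand alone exceeds $1$; at $t=-1$ it equals $1$ while $|\Gamma'(-1)|=2$; and for $t\le -1$ one has $|\Gamma'(t)|=e^{1/t+1}(1+1/|t|)>1$. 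Hence $|\partial_r F_{T,K,a}(x)|>1$, so $\norm{\nabla F_{T,K,a}(x)}>1$.

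All the calculations are routine once the construction is fixed. The one place that requires genuine care — and the reason for introducing $\Gamma$ and for keeping $\Phi$ strictly positive — is the combined pointwise bound in the last display: we need the $\Phi'$ contribution by itself to dominate $1$ when the blocking coordinate $x_r$ lies in $(-1,1)$, and we need $\Gamma'$ to take over (without ever being cancelled by the $\Psi_a'$ cross-terms, which is exactly why their nonnegativity matters) once $|x_r|\ge 1$. A secondary point to dispatch is that $F_{T,K,a}$ is genuinely $C^1$, i.e.\ that $\Psi_a$ and $\Gamma$ glue smoothly at $1/2$ and at $0$ respectively; this is handled exactly as the corresponding check for $\Psi$ in \citet{carmon2020lower}.
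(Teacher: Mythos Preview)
Your proof is correct and follows essentially the same approach as the paper: both arguments locate the first coordinate $r$ with $x_r<1$ (the paper's ``smallest $j$ with $x_j<1$ and $x_{j-1},\dots,x_{j-K}\ge 1$'' is exactly your $r$) and bound $\partial_r F_{T,K,a}(x)$ from above by dropping the nonpositive $-\Psi_a'\cdot(\text{sum})$ term and using $\prod_{s}\Psi_a(x_{r-s})\ge 1$. The only cosmetic difference is that the paper splits into the two cases $x_r\in(-1,1)$ and $x_r\le -1$ separately, whereas you package them into the single pointwise inequality $\sqrt{e}\,e^{-t^2/2}+|\Gamma'(t)|>1$ for $t<1$; both reach the same conclusion from the same ingredients.
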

The function $F_{T,K,a}$ remains smooth. However, by multiplying with additional $\Psi$ terms, we alter its geometry and make it more chaotic: the difference $F_{T,K,a}(0) - \inf_{x \in \R^T} F_{T,K,a}(x),$ the smoothness constant, and the maximum $\ell_{\infty}$--norm may increase. To mitigate this, we introduce the parameter $a$ in \eqref{eq:new_psi} that allows us to control these properties. Notice that if $a = e,$ then $\Psi_{a}(x) = \Psi(x)$ for all $x \in \R^T,$ where $\Psi$ is defined in \eqref{eq:BLHrRTz}. Instead of Lemma~\ref{lemma:worst_function}, we prove
\begin{lemma}[Lemmas~\ref{lemma:delta}, \ref{lemma:bound_grad}, and \ref{lemma:smooth}]
    \label{lemma:worst_function_properties}
    The function $F_{T,K,a}$ satisfies:
    \begin{enumerate}
        \item $F_{T,K,a}(0) - \inf_{x \in \R^T} F_{T,K,a}(x) \leq \Delta^0(K, a) \cdot T,$ where $\Delta^0(K, a) \eqdef \sqrt{2 \pi e} \cdot a^K.$
        \item The function $F_{T,K,a}$ is $\ell_1(K, a)$--smooth, where $\ell_1(K, a) \eqdef 12 \sqrt{2 \pi} e^{5/2} \cdot \frac{K^2 a^{K}}{\log a}.$
        \item For all $x \in \R^T,$ $\norm{\nabla F_{T,K,a}(x)}_{\infty} \leq \gamma_{\infty}(K,a),$ where $\gamma_{\infty}(K,a) \eqdef 6 \sqrt{2 \pi} e^{3/2} \cdot \frac{K a^K}{\sqrt{\log a}}.$
    \end{enumerate}
\end{lemma}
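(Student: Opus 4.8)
The plan is to imitate the proof of Lemma~\ref{lemma:worst_function} in \citet{carmon2020lower}, carrying out every estimate for the modified one-dimensional blocks $\Psi_a,\Phi,\Gamma$ of \eqref{eq:worst_case}. The first step is an auxiliary lemma collecting bounds on these scalar functions and their first two derivatives. For $\Phi$ one reads off $0\le\Phi\le\Phi(+\infty)=\sqrt{2\pi e}$, $\Phi(0)=\tfrac12\sqrt{2\pi e}$, $\abs{\Phi'}\le\sqrt e$, $\abs{\Phi''}\le1$. For $\Gamma$, a short computation gives $\Gamma\ge0$, $\abs{\Gamma'}\le e+1$, $\abs{\Gamma''}\le 27e^{-2}$, and also that $\Gamma\in C^\infty(\R)$ because $e^{1/x}$ and all its derivatives tend to $0$ as $x\to0^-$. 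For $\Psi_a$, writing $\Psi_a(x)=a\exp\!\big(-\log a\,(2x-1)^{-2}\big)$ for $x>1/2$ and maximizing expressions of the form $\exp(-\log a\,/u^{2})\,u^{-m}$ over $u>0$ (the maximum equals $(m/(2\log a))^{m/2}e^{-m/2}$), one obtains $0\le\Psi_a\le a$, $\abs{\Psi_a'}\le c_1\,a/\sqrt{\log a}$ and $\abs{\Psi_a''}\le c_2\,a/\log a$ for explicit absolute constants $c_1,c_2$, together with $\Psi_a\in C^\infty(\R)$ (again because the exponential and its derivatives vanish as $x\to1/2^+$). Hence $F_{T,K,a}\in C^\infty$, which is what legitimizes the Hessian argument used for Part 2.

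\emph{Part 1 (depth) and Part 3 ($\ell_\infty$ gradient).} At $x=0$, for every $i\ge2$ the product $\Psi_a(x_{i-K})\cdots\Psi_a(x_{i-1})$ contains the factor $\Psi_a(x_{i-1})=\Psi_a(0)=0$, whereas for $i=1$ it equals $\Psi_a(1)^{K}=1$ by the boundary convention $x_0=\dots=x_{-K+1}\equiv1$; therefore $F_{T,K,a}(0)=-\Phi(0)=-\tfrac12\sqrt{2\pi e}<0$. Since $\Psi_a\ge0$, $\Phi\ge0$ and $\Gamma\ge0$, for all $x$ we have $F_{T,K,a}(x)\ge-\sum_{i=1}^{T}\Psi_a(x_{i-K})\cdots\Psi_a(x_{i-1})\Phi(x_i)\ge-T a^{K}\sqrt{2\pi e}$, and subtracting gives $F_{T,K,a}(0)-\inf_x F_{T,K,a}(x)\le\sqrt{2\pi e}\,a^{K}T=\Delta^0(K,a)\,T$. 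For the gradient bound, note $x_j$ enters only the summand $i=j$ (as the $\Phi$-argument), the at most $K$ summands $i\in\{j+1,\dots,j+K\}$ (as a $\Psi_a$-argument), and $\Gamma(x_j)$, so
\[
\partial_j F_{T,K,a}(x)=-\Big(\textstyle\prod_{\ell=1}^{K}\Psi_a(x_{j-\ell})\Big)\Phi'(x_j)-\!\!\sum_{i=j+1}^{\min\{j+K,T\}}\!\!\Psi_a'(x_j)\Big(\textstyle\prod_{\ell\ne i-j}\Psi_a(x_{i-\ell})\Big)\Phi(x_i)+\Gamma'(x_j).
\]
Bounding the first term by $a^{K}\sqrt e$, each of the $\le K$ middle terms by $c_1\sqrt{2\pi e}\,a^{K}/\sqrt{\log a}$, and the last by $e+1$, and absorbing the lower-order pieces via the elementary inequality $a^{K}/\sqrt{\log a}\ge\sqrt{2e}$ on $(1,e]$, yields $\norm{\nabla F_{T,K,a}(x)}_\infty\le\gamma_\infty(K,a)$.

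\emph{Part 2 (smoothness).} I will bound the spectral norm of the Hessian and conclude by $\norm{\nabla F(x)-\nabla F(y)}\le\sup_z\norm{\nabla^2F(z)}\,\norm{x-y}$. Coordinates $x_j$ and $x_k$ occur in a common summand of \eqref{eq:worst_case} only when $\abs{j-k}\le K$, so the Hessian is banded with at most $2K+1$ nonzero entries per row, whence $\norm{\nabla^2F}\le\max_j\sum_k\abs{\partial^2_{jk}F}\le(2K+1)\max_{j,k}\abs{\partial^2_{jk}F}$. Differentiating the displayed formula once more, in each case ($k=j$ or $0<\abs{j-k}\le K$) one gets a sum of $O(K)$ terms, each a product of at most $K-2$ factors $\Psi_a$ (each $\le a$), at most two ``derivative'' factors among $\Psi_a',\Psi_a'',\Phi',\Phi''$, and one factor among $\Phi,\Gamma''$; invoking the scalar bounds together with $1\le1/\log a$ (valid since $a\le e$), every such term is $O(a^{K}/\log a)$, so $\abs{\partial^2_{jk}F}\le c_3\,K\,a^{K}/\log a$ — the factor $K$ arising e.g.\ from $\partial^2_{jj}F=-\sum_{i=j+1}^{\min\{j+K,T\}}\Psi_a''(x_j)\big(\prod_{\ell\ne i-j}\Psi_a(x_{i-\ell})\big)\Phi(x_i)-\big(\prod_\ell\Psi_a(x_{j-\ell})\big)\Phi''(x_j)+\Gamma''(x_j)$. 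Thus $\norm{\nabla^2F}\le(2K+1)c_3\,K\,a^{K}/\log a\le\ell_1(K,a)$, using $2K+1\le3K$ and that the accumulated constants stay under $12\sqrt{2\pi}e^{5/2}$.

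\emph{Main obstacle.} The delicate part is Part 2. Beyond the case-analysis of the Hessian entries, the real point is to extract the \emph{sharp} dependence on $a$, namely $1/\log a$ rather than a worse negative power of $\log a$; this rests entirely on the tight scalar bounds $\abs{\Psi_a''}\le c_2\,a/\log a$ and $\abs{\Psi_a'}^{2}\le c_1^{2}\,a^{2}/\log a$ from the first step, and it is precisely here that the parameter $a$ earns its keep by counteracting the blow-up from the length-$K$ products $\Psi_a(x_{i-K})\cdots\Psi_a(x_{i-1})$. A secondary, easily overlooked item is verifying that $F_{T,K,a}$ is honestly $C^2$ at the gluing loci $x_i=1/2$ (for $\Psi_a$) and $x_i=0$ (for $\Gamma$), without which the mean-value/Hessian argument would be inapplicable.
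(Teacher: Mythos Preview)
Your proposal is correct and follows essentially the same approach as the paper. Both proofs first establish the scalar bounds on $\Psi_a,\Phi,\Gamma$ and their derivatives (the paper packages these as separate lemmas), then handle Part~1 via $F_{T,K,a}(0)\le 0$ together with the global lower bound $-T a^{K}\sqrt{2\pi e}$, Part~3 via the explicit formula for $\partial_j F_{T,K,a}$ bounded term-by-term, and Part~2 by observing the Hessian is $(2K+1)$-banded, bounding $\max_{j,k}\abs{\partial^2_{jk}F}$ by $O(Ka^{K}/\log a)$, and passing to the spectral norm. The only cosmetic difference is that you invoke the row-sum (Gershgorin-type) inequality $\norm{H}\le\max_j\sum_k\abs{H_{jk}}$ to get the factor $2K+1$, whereas the paper reaches the identical bound $\norm{H}\le(2K+1)\max_{j,k}\abs{H_{jk}}$ by expanding $x^\top H x$ and applying Cauchy--Schwarz.
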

\begin{figure}[t]
  \centering
  \includegraphics[width=\linewidth]{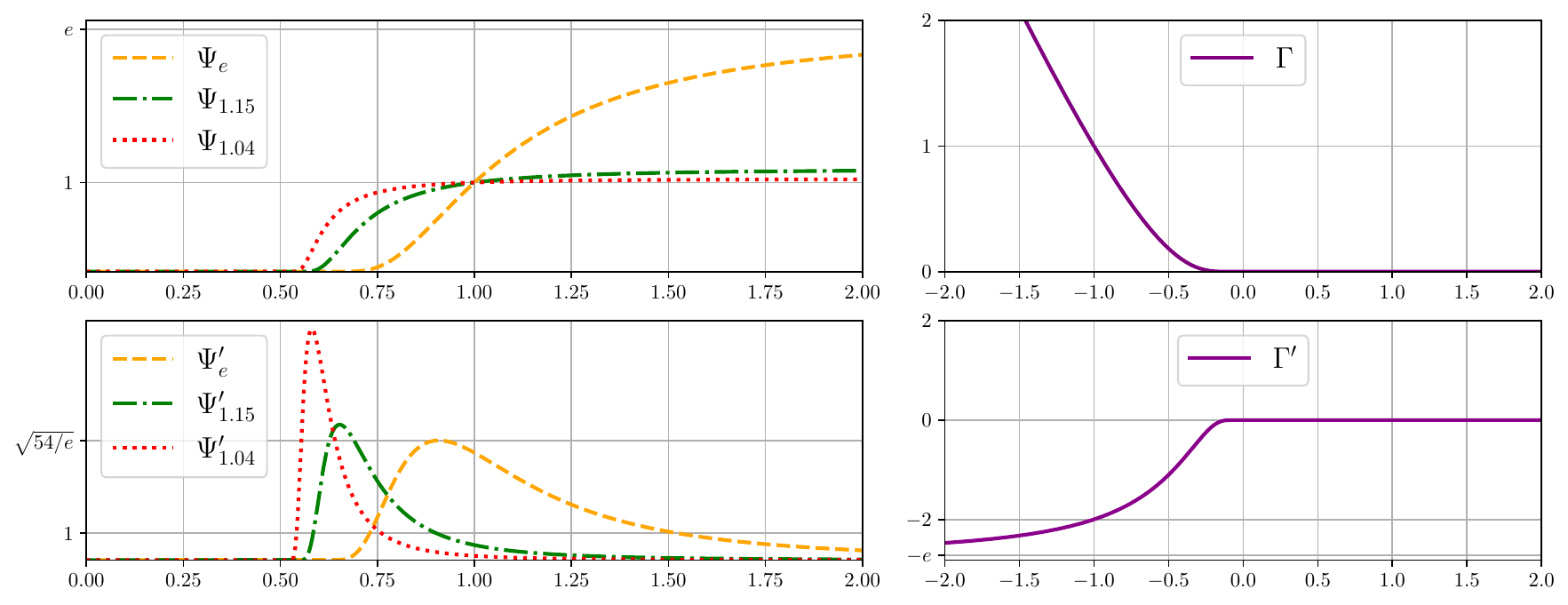}
  \caption{The functions $\Psi_a(x)$ and $\Gamma(x)$, along with their derivatives $\Psi'_a(x)$ and $\Gamma'(x)$. The plots of $\Phi(x)$ and $\Phi'(x)$ are shown in \citep{carmon2020lower}.}
  \label{fig:psi_phi}
\end{figure}
Taking $K = 1$ and $a = e,$ up to constant factors, Lemmas~\ref{lemma:worst_function_new} and \ref{lemma:worst_function_properties} reduce to Lemmas~\ref{lemma:worst_function_1} and \ref{lemma:worst_function}. The larger the value of $K,$ the larger the bounds in Lemma~\ref{lemma:worst_function_properties}, and this growth can be exponential if $a = e.$ However, with a proper choice of $1 < a \ll e,$ we can mitigate the increase caused by $K.$
\section{Lower Bound with Server-to-Worker (S2W) Communication}
\label{sec:main}
We now present informal and formal versions of our main result:
\begin{theorem}[Informal Formulation of Theorem~\ref{thm:main}]
  \label{thm:main_informal}
  Let Assumptions~\ref{ass:lipschitz_constant}, \ref{ass:lower_bound}, \ref{ass:stochastic_variance_bounded}, and \ref{ass:time} hold. It is infeasible to find an $\varepsilon$--stationary point faster than
  \begin{align}
    \label{eq:UQbAtHSoFgrFOdZpFb}
    \textstyle \tilde{\Omega}\left(\min\left\{{\color{mydarkred}\tau_{\textnormal{s}} d \frac{L \Delta}{\varepsilon}}, h \frac{L \Delta}{\varepsilon} + {\color{mydarkred}h \frac{\sigma^2 L \Delta}{\varepsilon^2}}\right\} \right)
  \end{align}
  seconds (up to logarithmic factors), 
  using unbiased compressors (Def.~\ref{def:unbiased_compression}) based on random sparsification, 
  for all $L, \Delta, \varepsilon, n, \sigma^2, d, \tau_{\textnormal{s}}, h > 0$ such that $L \Delta \geq \tilde{\Theta}(\varepsilon)$ and dimension $d \geq \tilde{\Theta}(\nicefrac{L \Delta}{\varepsilon}).$
\end{theorem}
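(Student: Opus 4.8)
The plan is to exhibit, for each admissible tuple $(L,\Delta,\varepsilon,n,\sigma^2,d,\tau_{\textnormal{s}},h)$, one function $f$ and one stochastic oracle that is hard for every zero‑respecting algorithm (Definition~\ref{def:zero}) running Protocol~\ref{alg:simplified_time_multiple_oracle_protocol} with random‑sparsification compressors. I would take $f(x)=\tfrac{L\lambda^2}{\ell_1(K,a)}\,F_{T,K,a}(x/\lambda)$ from \eqref{eq:worst_case} with $x^0=0$ and $\lambda\asymp\ell_1(K,a)\sqrt{\varepsilon}/L$, and let $T$ be the largest integer compatible, via Lemma~\ref{lemma:worst_function_properties}, with $f(0)-\inf f\le\Delta$, so that $T\asymp L\Delta/(\ell_1(K,a)\Delta^0(K,a)\varepsilon)$ and, by Lemma~\ref{lemma:worst_function_new}(2), $\|\nabla f(x)\|^2>\varepsilon$ whenever $\mathrm{prog}^K(x/\lambda)<T$. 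For the oracle I would use the probabilistic zero‑chain device: $\nabla f(x;\xi)$ coincides with $\nabla f(x)$ on every coordinate except the single coordinate $\mathrm{prog}^K(x/\lambda)+1$ that $\nabla f$ can newly expose (Lemma~\ref{lemma:worst_function_new}(1)), which is kept and rescaled by $1/q$ with probability $q$ and zeroed otherwise. Choosing $q\asymp\min\{1,\varepsilon/\sigma^2\}$ — with a correction dictated by $\gamma_{\infty}(K,a)$ — makes the oracle unbiased and $\sigma^2$‑variance‑bounded, while forcing any worker that already holds a full length‑$K$ window to need $\Omega(1/q)$ gradient calls in expectation to advance one coordinate. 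I would set $a=1+\Theta(1/K)$ so that $a^K=\Theta(1)$, and $K=\tilde\Theta(1)$ with $K\gtrsim\log n$; this keeps $\ell_1(K,a)\Delta^0(K,a)=\tilde O(1)$ and hence $T=\tilde\Theta(L\Delta/\varepsilon)$.

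\textbf{Reduction to a random sum.} Fix any algorithm and let $\mathcal T$ be the first time some worker reaches $\mathrm{prog}^K=T$; by the above this lower‑bounds its time to an $\varepsilon$‑stationary point. I would track the frontier $j^\star(t)=\max_i\mathrm{prog}^K$ over workers and argue, coordinate by coordinate, that $\mathcal T$ dominates a sum $\sum_{j=1}^{T}\sigma_j$ of stage costs, where $\sigma_j$ is the conditional time to turn $j^\star=j-1$ into $j^\star=j$. The decisive feature of \eqref{eq:worst_case} over \eqref{eq:worst_case_prev} is that progress at stage $j$ requires a worker holding the entire length‑$K$ window $\{j-K,\dots,j-1\}$, and the only ways to hold it are to have steam‑rolled it oneself or to have received all of those coordinates from the server; since the compressors are random sparsifications whose retained coordinate set is uniform and independent of which coordinates are nonzero, assembling a fresh $K$‑window from scratch costs a \emph{sum} of order $K$ conditionally‑geometric waits — a quantity that \emph{concentrates} — whereas with \eqref{eq:worst_case_prev} a single geometric wait suffices, and it is exactly that single geometric whose $\min_{i\in[n]}$ produces the spurious $n$‑fold speed‑up of \eqref{eq:FCaiJRrquGxzzDVAsn}. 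Hence the set of workers ``ready'' to progress at stage $j$ stays small, and $\min$ over ready workers — rather than over all $n$ — costs only poly‑logarithmic factors once $K\gtrsim\log n$. Making this rigorous rewrites $\mathcal T$ as the special random sum whose concentration is the technical core of the argument, each increment bounded below by the cheaper of $\tilde\Omega(h/q)$ seconds of gradient work on the currently ready workers or $\tilde\Omega(\tau_{\textnormal{s}} d)$ seconds of server communication needed to create another ready worker.

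\textbf{Concentration and conclusion.} I would then prove a high‑probability lower bound for that sum: with probability at least $1/2$, $\mathcal T=\tilde\Omega\big(\min\{\tau_{\textnormal{s}} d\,T,\ hT/q+hT\}\big)$. The $\min$ reflects the built‑in trade‑off: keeping many workers ready forces $\Omega(\tau_{\textnormal{s}} d)$ communication per stage, hence $\Omega(\tau_{\textnormal{s}} d\,T)$ in total, while keeping communication cheap leaves only $\tilde O(1)$ ready workers so the frontier crawls at rate $\Theta(q/h)$, hence $\Omega(hT/q)$ in total, together with the unavoidable $\Omega(hT)$ from $T$ gradient calls. Substituting $T\asymp L\Delta/\varepsilon$ and $q\asymp\varepsilon/\sigma^2$ yields precisely \eqref{eq:UQbAtHSoFgrFOdZpFb}; the hypotheses $L\Delta\ge\tilde\Theta(\varepsilon)$ and $d\ge\tilde\Theta(L\Delta/\varepsilon)$ are exactly what is needed for $T\ge 1$ and for the sparsifier to act on at least $T$ coordinates. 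Finally I would check that the oracle genuinely fits Definition~\ref{def:unbiased_compression}‑type sparsification on the server side and that only Definition~\ref{def:zero} was used.

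\textbf{Main obstacle.} The hard part will be the concentration step together with its adaptive bookkeeping: one must show that \emph{no} server schedule — fully adaptive to every observed gradient — can enlarge the ready set faster than communication allows, and then establish a concentration inequality for a sum whose increments are only \emph{conditionally} geometric (as in the conditioning behind \eqref{eq:FCaiJRrquGxzzDVAsn}), so off‑the‑shelf Chernoff/Bernstein bounds do not apply directly. Intertwined with this is the quantitative tension in choosing $K$: it must be large enough ($\gtrsim\log n$) for the concentration of the $K$‑window waiting times to neutralize the $\min$ over $n$ workers, yet by Lemma~\ref{lemma:worst_function_properties} the smoothness constant $\ell_1(K,a)$ and the sup‑norm bound $\gamma_{\infty}(K,a)$ grow with $K$ — exponentially unless $a\downarrow1$ — so $K$ must remain poly‑logarithmic, which is what ultimately limits how strong the communication term can be made.
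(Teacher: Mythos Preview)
Your proposal has the right ingredients and arrives at the correct bound, but the reduction step is organized differently from the paper, and the paper's organization is cleaner and avoids a real difficulty that your ``ready set'' argument would have to face.

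\textbf{Where you match the paper.} The hard function $F_{T,K,a}$, the scaling $\lambda\asymp\ell_1\sqrt{\varepsilon}/L$, the choice $T\asymp L\Delta/(\ell_1\Delta^0\varepsilon)$, the probabilistic zero-chain oracle with $q\asymp\varepsilon\gamma_\infty^2/\sigma^2$, and the parameter balancing $a=1+\Theta(1/K)$ with $K\asymp\log n$ are all exactly as in the proof. You also correctly identify that the sum of $K$ conditionally-geometric waits concentrates, and that this is what kills the spurious $n$-speedup of \eqref{eq:FCaiJRrquGxzzDVAsn}. (One small slip: the oracle the paper uses zeros out coordinates $j>\mathrm{prog}^1(x)$, not $\mathrm{prog}^K(x)+1$; the only \emph{new} coordinate the gradient can expose is $\mathrm{prog}^1(x)+1$, and only when $\mathrm{prog}^K(x)=\mathrm{prog}^1(x)$.)

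\textbf{Where you diverge.} You decompose $\mathcal T$ into $T$ per-coordinate stages $\sigma_j$ and argue via a ``ready set'' dichotomy: either many workers are ready (forcing $\Omega(\tau_{\textnormal{s}} d)$ communication per stage) or only $\tilde O(1)$ are (forcing $\Omega(h/q)$ per stage). The paper instead groups the $T$ coordinates into $B=\lfloor T/K\rfloor$ \emph{blocks} of size $K$ and lower-bounds the time for block $b$ directly by
\[
\min_{i\in[n]}\min\Bigl\{h\sum_{k=1}^{K/2}\eta_{b,i,k},\ \tau_{\textnormal{s}}\sum_{k=1}^{K/2}\mu_{b,i,k}\Bigr\},
\]
the rationale being that within a block at least $K/2$ of the $K$ coordinates must come from one of the two sources. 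Because each term inside the $\min_{i\in[n]}$ is already a \emph{sum} of $K/2$ geometric-like variables, a Chernoff bound shows the $\min$ over $n$ costs only a factor $n^{O(1/K)}$, which is $O(1)$ for $K\gtrsim\log n$. This sidesteps entirely the question of how many workers are ``ready'' at any moment.

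\textbf{Why the paper's route is easier.} Your per-stage claim ``each increment $\gtrsim\min\{h/q,\tau_{\textnormal{s}} d\}$'' is not true stage by stage: in the first few stages (and potentially later, if many workers happen to be near the frontier) the ready set can be of size $\Theta(n)$, giving $\sigma_j\sim h/(nq)$. To rescue the argument you would have to track the evolution of the ready set under a fully adaptive server schedule, account for workers catching up via their own gradients while the leader advances, and then sum the resulting mixed increments --- exactly the ``adaptive bookkeeping'' you flag as the main obstacle. The block decomposition dissolves this: it does not care which worker is ready when, only that \emph{some} worker must accumulate $K/2$ successes (of either kind) within the block, and the concentration of that sum is what absorbs the $\min_{i\in[n]}$ in one shot. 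If you tried to push your approach through, you would likely end up reinventing the block argument.
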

\begin{restatable}{theorem}{MAINTHEOREM}
  \label{thm:main}
  Let $L, \Delta, \varepsilon, n, \sigma^2, d, \tau_{\textnormal{s}}, \tau_{\textnormal{w}}, h > 0$ be any numbers such that $\bar{c}_1 \varepsilon \log^4(n + 1) < L \Delta$ and dimension $d \geq \bar{c}_3 \frac{L \Delta}{\log^3(n + 1) \varepsilon}.$ Consider Protocol~\ref{alg:simplified_time_multiple_oracle_protocol}. 
  For all $i \in [n]$ and $k \geq 0,$ compressor $\cC^k_i$ selects and transmits $P^k_i$ \emph{uniformly random coordinates without replacement, scaled by any constants\footnote{To potentially preserve unbiasedness. For instance, Rand$K$ scales by $\nicefrac{d}{K}.$}},
  where $P^k_i \in \{0, \dots, d\}$ may vary across each compressor \footnote{For instance, the compressors can be Rand$K$ (see Def.~\ref{def:rand_k}) with any $K \in [d]$, Perm$K$ \citep{szlendak2021permutation}, Identity compressor when $P^k_i = d.$}. 
  Then, for any algorithm $A \in \cA_{\textnormal{zr}}$ (Def.~\ref{def:zero}), there exists a function $f \,:\, \R^d \to \R$ such that $f$ is $L$-smooth, i.e., $\norm{\nabla f(x) - \nabla f(y)} \leq L \norm{x - y}$ for all $x, y \in \R^d,$ and $f(0) - \inf_{x \in \R^d} f(x) \leq \Delta,$ exists a stochastic gradient oracles that satisfies Assumption~\ref{ass:stochastic_variance_bounded}, and 
  $\Exp{\inf_{y \in S_t}\norm{\nabla f(y)}^2} > \varepsilon$ 
  for all 
  \begin{align}
  \textstyle t \leq \bar{c}_2 \times \left(\frac{1}{\log^3 (n + 1)} \cdot \frac{L \Delta}{\varepsilon}\right) \min\left\{\frac{1}{\log (n + 1)} \cdot \tau_{\textnormal{s}} d, \max\left\{h, \frac{1}{\log^3 (n + 1)} \cdot \frac{h \sigma^2}{\varepsilon}\right\}\right\},
  \end{align}
  where
  $S_t$ is the set of all possible points that can be constructed by $A$ up to time $t$ based on $I$ and $\{I_i\}.$
  The quantities $\bar{c}_1,$ $\bar{c}_2,$ and $\bar{c}_3$ are universal constants.
\end{restatable}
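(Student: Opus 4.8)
The plan is to exhibit, for a given tuple $(L,\Delta,\varepsilon,n,\sigma^2,d,\tau_{\textnormal{s}},h)$, a single hard instance: a scaled copy of the chain function $F_{T,K,a}$ of \eqref{eq:worst_case} embedded into $\R^d$, handed identically to all $n$ workers (homogeneous setting), together with a ``progress-hiding'' stochastic oracle. First I would fix $T\simeq \tfrac{L\Delta}{\varepsilon}$ (up to logarithmic factors), $K\simeq\log(n+1)$, and $a=1+\Theta(1/\log(n+1))$. The point of these choices is Lemma~\ref{lemma:worst_function_properties}: with $a=e$ the constants $\Delta^0(K,a),\ell_1(K,a),\gamma_\infty(K,a)$ grow like $a^K$, i.e. exponentially in $K\simeq\log n$, but with $a=1+\Theta(1/\log n)$ one has $a^K=\Theta(1)$ and $1/\log a=\Theta(\log n)$, so all three quantities are $\log^{O(1)}(n+1)$. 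Rescaling $f\mapsto c\, f(\cdot/\lambda)$ in the standard way then makes $f$ $L$-smooth with $f(0)-\inf f\le\Delta$; padding with $d-T\ge 0$ unused coordinates (legitimate since $d\ge\tilde\Theta(L\Delta/\varepsilon)\ge T$) places it in $\R^d$. By item~2 of Lemma~\ref{lemma:worst_function_new}, any point with $\textnormal{prog}^K<T$ has squared gradient $>\varepsilon$ after rescaling, so the algorithm is forced to produce a point with $\textnormal{prog}^K=T$; by item~1, a stochastic-gradient query can reveal a new coordinate $m+1$ only when evaluated at a point $x$ with $\textnormal{prog}^K(x)\ge m$, i.e. only when the last $K$ unlocked coordinates $m-K+1,\dots,m$ are all present. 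The oracle returns $\nabla f$ with the single newest nonzero coordinate zeroed out with probability $1-p$, where $p\simeq\min\{1,\varepsilon/(\sigma^2\log^{O(1)}(n+1))\}$ is tuned so that Assumption~\ref{ass:stochastic_variance_bounded} holds; this is the source of the $h\tfrac{L\Delta}{\varepsilon}+h\tfrac{\sigma^2L\Delta}{\varepsilon^2}$ branch.

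The heart of the argument is the communication branch, and it is where the $K$-window and the restriction to random-sparsification compressors combine. Since such a compressor serves a \emph{uniformly random} coordinate subset and cannot target a chosen coordinate, delivering one specific freshly unlocked coordinate to a given worker costs $\Theta(d)$ of server-to-worker traffic in expectation (a single-coupon collector over $d$ coordinates), and spreading this over the $n$ workers lets the luckiest one receive it after only $\Theta(d/n)$ of the traffic addressed to it --- which is exactly why the naive construction only yields the $n$-dependent bound \eqref{eq:FCaiJRrquGxzzDVAsn}. The window kills this: to perform the advance $m\to m+1$ a single worker must hold \emph{all} of coordinates $m-K+1,\dots,m$, and coordinate $m-j$ only becomes transmittable after the frontier has reached $m-j$, so --- partitioning the run into the phases delimited by successive frontier advances --- that worker must independently ``catch'' coordinate $m-j$ during phase $m-j$, for each $j=0,\dots,K-1$. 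If the server spends $q$ coordinates of traffic per worker per phase, each catch has probability $\lesssim q/d$, the $K$ catches are (conditionally on the past) essentially independent, and a union bound over the $n$ workers shows some worker can advance with probability only $\lesssim n(q/d)^K$, which is $o(1)$ unless $q\gtrsim d\,n^{-1/K}$; and $n^{-1/K}=\Theta(1)$ precisely because $K\simeq\log(n+1)$. Hence each advance costs $\tilde\Omega(\tau_{\textnormal{s}}d)$, giving $\tilde\Omega(\tau_{\textnormal{s}}d\,T)=\tilde\Omega(\tau_{\textnormal{s}}d\,\tfrac{L\Delta}{\varepsilon})$ --- unless the frontier is instead advanced by a worker chaining locally without the server's help, which by the progress-hiding oracle costs $\Omega(1/p)$ gradient queries (each $h$ seconds) per advance, i.e. $\Omega(h\,T/p)=\Omega(h\tfrac{L\Delta}{\varepsilon}+h\tfrac{\sigma^2L\Delta}{\varepsilon^2})$ in total. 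The $\min$ of the two branches is therefore unavoidable, and it is the claimed bound.

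Turning this heuristic into Theorem~\ref{thm:main} is the step I expect to be the main obstacle. The algorithm in Protocol~\ref{alg:simplified_time_multiple_oracle_protocol} is adversarial and asynchronous, so it need not proceed in clean synchronous phases; the compressor sizes $P^k_i$ and the server points $s^k_i$ are chosen online; the $\textnormal{prog}^K$ windows of consecutive advances overlap; and the per-phase catch events are correlated through the shared compressor history and through the interleaving of the two (local versus communicated) ways of advancing. Following the paper's outline, the way through is to reduce the event ``the frontier has not reached $T$ by time $t$'' to a lower bound on one explicit random variable \eqref{eq:CswajCMIarwEL} --- essentially the total server-to-worker traffic accumulated before any worker first holds a fresh $K$-window, written as a sum over advances of a $\min_{i\in[n]}$ of geometric-like waiting times with history-dependent success probabilities --- and then to prove a self-contained concentration inequality showing that this sum is $\tilde\Omega(\tau_{\textnormal{s}}d\,T)$ with high probability in spite of the dependence and the inner minimum. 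Once that statistical lemma is in place, the two branches combine as above, the constants $\bar c_1,\bar c_2,\bar c_3$ are read off from Lemma~\ref{lemma:worst_function_properties}, the rescaling, and the concentration bound, and the hypotheses $L\Delta\ge\tilde\Theta(\varepsilon)$ and $d\ge\tilde\Theta(L\Delta/\varepsilon)$ are exactly what make $T,K,a$ admissible and leave room for the padded coordinates.
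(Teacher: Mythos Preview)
Your proposal is correct and follows the paper's approach: a rescaled $F_{T,K,a}$ with $K\simeq\log(n+1)$ and $a=1+\Theta(1/K)$, the Arjevani-style progress-hiding oracle, and reduction to the concentration of the random sum \eqref{eq:CswajCMIarwEL} followed by a Chernoff-type bound. Two small corrections to your heuristic are worth recording. First, the paper does \emph{not} partition into $T$ per-coordinate phases with one catch required per phase; instead it partitions the chain into $B=\lfloor T/K\rfloor$ disjoint \emph{blocks} of $K$ coordinates each and argues by pigeonhole that within a block a worker must obtain at least $K/2$ of the block's coordinates either all locally or all from the server --- this is exactly why the inner quantity in \eqref{eq:CswajCMIarwEL} is a $\min$ of two sums of length $K/2$, and it cleanly sidesteps the window-overlap problem you flagged. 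Second, since $\gamma_\infty(K,a)^2\simeq K^3\simeq\log^3(n+1)$, the oracle success probability is $p_\sigma\simeq\min\{1,\varepsilon\log^3(n+1)/\sigma^2\}$ with the log factor in the \emph{numerator}, so the stochastic branch is $\max\{h,\,h\sigma^2/(\varepsilon\log^3(n+1))\}$ --- weaker, not stronger, by the log factor --- which is precisely the form in the theorem statement.
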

The formulation of Theorem~\ref{thm:main} is standard in the literature. However, following \citet{tyurin2023optimal}, we present the lower bound in terms of \emph{time complexities} rather than \emph{iteration complexities}. Then, following \citet{huang2022lower,he2023unbiased,tyurin2024shadowheart}, we consider a subfamily of unbiased compressors based from Definition~\ref{def:unbiased_compression} on random sparsification to prove the lower bound; this is standard practice for taking the ``worst-case'' compressors from the family (similarly to taking the ``worst-case'' functions \citep{carmon2020lower,nesterov2018lectures}). Moreover, due to the uncertainty principle \citep{safaryan2022uncertainty}, all unbiased compressors exhibit variance and communication cost comparable to those of the Rand$K$ sparsifier in the worst case (up to constant factors).

The main observation in Theorems~\ref{thm:main_informal} and \ref{thm:main} is that it is not possible to scale both $d$ and $\nicefrac{\sigma^2}{\varepsilon}$ by more than $\log^4(n + 1)$ and $\log^6(n + 1),$ respectively. Asymptotically, this scaling is significantly worse than the linear $n$ and square-root $\sqrt{n}$ scalings discussed in Section~\ref{sec:related_work}. For instance, if $n = 10{,}000$ and $d$ is increased by a factor of $10$, we have to increase $n$ by a factor of $10^3$ (two factors more) to ensure that $\nicefrac{\tau_{\textnormal{s}} d}{\log^4 (n + 1)}$ does not change.

In Section~\ref{sec:proof_sketch}, we present the intuition and the proof sketch of the result.

\section{Lower Bound with Both W2S and S2W Communication}
\label{sec:combined}
In the previous section, we provide the lower bound without taking into account the communication cost $\tau_{\textnormal{w}}.$ Combining Theorem~\ref{thm:main} with our new Theorem~\ref{thm:main_two}, which extends the result by \citet{tyurin2024shadowheart} for our setup, we can obtain the complete lower bound \eqref{eq:oaFCGEGjNFVvi} from Theorem~\ref{thm:main_informal_combined} with $\tau_{\textnormal{w}} > 0$ and $\tau_{\textnormal{s}} > 0.$ Notice that if $\tau_{\textnormal{s}} \simeq \tau_{\textnormal{w}},$ then the lower bound is 
$\textstyle \tilde{\Omega}\left(\min\left\{h \product \left(\frac{\sigma^2}{n \varepsilon} + 1\right) \frac{L \Delta}{\varepsilon} + {\color{mydarkred}\tau_{\textnormal{s}} d \frac{L \Delta}{\varepsilon}}, h \frac{L \Delta}{\varepsilon} + {\color{mydarkred}h \frac{\sigma^2 L \Delta}{\varepsilon^2}}\right\} \right).$ 
Up to logarithmic factors, under Assumptions~\ref{ass:lipschitz_constant}, \ref{ass:lower_bound}, \ref{ass:stochastic_variance_bounded}, and \ref{ass:time}, it is infeasible to improve both $d$ and $\nicefrac{\sigma^2}{\varepsilon}$ as $n$ increases.
\subsection{Algorithms almost matching the lower bound}
\label{sec:alg}
Due to the $\min,$ there are two regimes in which the lower bound \eqref{eq:oaFCGEGjNFVvi} operates. If the second term is smaller in \eqref{eq:oaFCGEGjNFVvi}, then the lower bound is $\tilde{\Omega}\left(\frac{h L \Delta}{\varepsilon} + {\color{mydarkred} \frac{h \sigma^2 L \Delta}{\varepsilon^2}}\right),$ which is matched by the vanilla SGD method run locally (without any communication or cooperation). Otherwise, if the first term is smaller, then the lower bound is matched by \refalgone{eq:mTyVY}, which has the matching complexity \eqref{eq:ANMHmYlWqhqXoLMHCJ} (up to logarithmic factors). Moreover, in the latter case, if $\tau_{\textnormal{s}} \simeq \tau_{\textnormal{w}},$ the lower bound becomes $\tilde{\Omega}\left(\min\left\{h \product \left(\frac{\sigma^2}{n \varepsilon} + 1\right) \frac{L \Delta}{\varepsilon} + {\color{mydarkred}\tau_{\textnormal{s}} d \frac{L \Delta}{\varepsilon}}\right\} \right),$ which can be matched by \refalgone{eq:minibatchbatch} with the complexity \eqref{eq:time_minibatch_sgd}; thus, if $\tau_{\textnormal{s}} \simeq \tau_{\textnormal{w}},$ then unbiased sparsified compression is not needed at all, as it cannot help due to the lower bound.
\section{Conclusion}
We prove nearly tight lower bounds for centralized distributed optimization under the computation and communication Assumption~\ref{ass:time}. We show that \emph{even in the homogeneous scenario}, it is not possible to scale both $d$ and $\nicefrac{\sigma^2}{\varepsilon}$ by more than poly-logarithmic factors in $n.$ Notice that the family of unbiased compressors contains the family of biased compressors \citep{beznosikov2020biased}. Therefore, our lower bounds also apply to methods that use biased compressors, in the sense that there exists a ``worst-case'' compressor for which these methods cannot achieve a convergence rate faster than the lower bound in Theorem~\ref{thm:main_informal_combined}.

The lower bounds are tight only up to logarithmic factors. Thus, a possible challenging direction is to improve the powers of the logarithms, or even eliminate the logarithms entirely. The latter (if at all possible) may be very challenging and would likely require entirely different constructions and techniques. Another limitation is that the lower bounds are constructed using random sparsifiers. Due to the uncertainty principle \citep{safaryan2022uncertainty}, we conjecture that the bounds also hold for the entire family of unbiased compressors, but proving this would require more sophisticated constructions. 

In practice, however, biased and unbiased compressors, including Top$K$ and Rank$K$ \citep{alistarh2018convergence,PowerSGD}, exhibit significantly better compression properties than those predicted by worst-case analysis \citep{beznosikov2020biased}. When used on the server side in combination with \algname{EF} or \algname{EF21-P} \citep{gruntkowska2023ef21,tyurin2024shadowheart}, they may help mitigate the pessimistic term ${\color{mydarkred}\tau_{\textnormal{s}} d \nicefrac{L \Delta}{\varepsilon}}.$ Moreover, our pessimistic lower bound may potentially be broken under additional assumptions such as convexity or second-order smoothness.

\section*{Acknowledgements}
The work was supported by the grant for research centers in the field of AI provided by the Ministry of Economic Development of the Russian Federation in accordance with the agreement 000000C313925P4F0002 and the agreement №139-10-2025-033.
   
\bibliography{iclr2026_conference}
\bibliographystyle{iclr2026_conference}

\newpage
\appendix

\tableofcontents

\newpage

\section{Proof Sketch}
   \label{sec:proof_sketch}
   We illustrate the main idea behind the proof and how the new ``worst-case'' function helps to almost eliminate the scaling with $n$. Consider the first $K$ coordinates of $F_{T,K,a}$ (which is scaled in the proof to satisfy Assumptions~\ref{ass:lipschitz_constant} and \ref{ass:lower_bound}). Recall that, due to Lemma~\ref{lemma:worst_function_new}, the only way to discover the $K + 1$\textsuperscript{th} coordinate in any worker is to ensure that all of the first $K$ coordinates are non-zero. 
   
   \textbf{Reduction to a statistical problem.} There are only two options by which a worker may discover a new non-zero coordinate: through local stochastic computations or through communication from the server. In the first option, a worker computes a stochastic gradient, which takes $h$ seconds. However, due to the construction of stochastic gradients \citep{arjevani2022lower}, even if the computation is completed, the worker will not make progress or discover a new non-zero coordinate, as it will be zeroed out with probability $p_{\sigma} = \Theta\left(\nicefrac{\varepsilon \cdot \gamma_{\infty}^2(K, a)}{\sigma^2}\right).$ In the second option, due to the condition of Theorem~\ref{thm:main}, a worker receives a stream of uniformly sampled coordinates $\nu_1, \nu_2, \dots$ (workers get different streams), and the worker can discover a new non-zero coordinate only if random variable $\nu_i \in [K]$, which satisfies  
   $\ProbCond{\nu_i \in [K]}{\nu_1, \dots, \nu_{i - 1}} \leq \nicefrac{K}{T - i + 1} \leq p_{K} \eqdef \nicefrac{2 K}{T}$ for all $i \leq \nicefrac{T}{2}.$
   
   Next, we define two sets of random variables: (i) let $\eta_{1,i,k}$ denote the number of stochastic gradient computations until the first moment when a coordinate is not zeroed out in the stochastic gradient oracle (see \eqref{eq:stoch_constr}), after the moment when the $(k - 1)$\textsuperscript{th} coordinate is no longer zeroed out in worker $i$; (ii) let $\mu_{1,i,k}$ be the number of received coordinates until the moment when the last received coordinate belongs to $[K]$, after the $(k - 1)$\textsuperscript{th} time this has happened. In other words, $\eta_{1,i,1}$ is the number of stochastic gradient computations until the moment when the algorithm receives a ``lucky'' stochastic gradient where the last coordinate is not zeroed out. The random variable $\eta_{1,i,2}$ is the number of computations until it happens for the second time, and so on. Similarly, $\mu_{1,i,1}$ is the position of the first coordinate from the stream sent by the server to worker $i$ that belongs to $[K]$. The random variable $\mu_{1,i,2}$ refers to the second time this occurs, and so on. 
   By definition, the sequences $\{\eta_{1,i,k}\}$ and $\{\mu_{1,i,k}\}$ follow \emph{approximately} geometric-like distributions with parameters $p_{\sigma}$ and $p_K,$ respectively.
   
   To discover all of the first $K$ coordinates, either the first or the second process must uncover at least $\nicefrac{K}{2}$ coordinates. If worker~$i$ has discovered fewer than $\nicefrac{K}{2}$ coordinates through stochastic gradient computations, and fewer than $\nicefrac{K}{2}$ coordinates through receiving them from the server, then it will not be able to cover all $K$ coordinates. Thus, the algorithm should wait at least 
   $\min_{i \in [n]} \left\{\min\left\{h \sum_{k=1}^{\frac{K}{2}} \eta_{1,i,k}, \tau_{\textnormal{s}} \sum_{k=1}^{\frac{K}{2}} \mu_{1,i,k}\right\}\right\}$
   seconds until the moment when it can potentially discover the $K + 1$\textsuperscript{th} coordinate,  
   where the outer minimum $\min_{i \in [n]}$ appears because it is sufficient for the algorithm to wait for the first ``luckiest'' worker. Repeating the same arguments $B \eqdef \flr{\nicefrac{T}{K}}$ times, the algorithms requires at least 
   \begin{align}
     \label{eq:CswajCMIarwEL}
     \textstyle t_B \eqdef \sum\limits_{b = 1}^{B} \min\limits_{i \in [n]} \left\{\min\left\{h \sum\limits_{k=1}^{K / 2} \eta_{b,i,k}, \tau_{\textnormal{s}} \sum\limits_{k=1}^{K / 2} \mu_{b,i,k}\right\}\right\}
   \end{align}
   seconds to discover the $T$\textsuperscript{th} coordinate and potentially find an $\varepsilon$--stationary point, where the sequences $\{\eta_{b,i,k}\}$ and $\{\mu_{b,i,k}\}$ follow \emph{approximately} geometric-like distributions with $p_{\sigma}$ and $p_K,$ respectively.
   
   \textbf{Analysis of the concentration.} Hence, we have reduced the lower bound to the analysis of the sum $t_B.$ Recall \eqref{eq:FCaiJRrquGxzzDVAsn}, where the lower bound improves with $n$ due to $\min_{i \in [n]}.$ In \eqref{eq:CswajCMIarwEL}, we also get $\min_{i \in [n]}.$ However, and this is the main reason for the new construction, there are two sums $\sum_{k=1}^{K / 2},$ which allows us to mitigate the influence of the $\min_{i \in [n]}.$ In particular, we can show that 
     $\textstyle t_B~\gtrsim~\frac{B K}{n^{1 / K}} \min\left\{\nicefrac{h}{p_{\sigma}}, \nicefrac{\tau_{\textnormal{s}}}{p_{K}}\right\} $
   with high probability. Notice that the first fraction improves with $n^{\frac{1}{K}}$ instead of $n$ due to the sums; thus, the larger $K,$ the smaller the influence of $n.$
   
   \textbf{Putting it all together.} However, we cannot take $K$ too large due to Lemma~\ref{lemma:worst_function_properties}. Substituting the choice of $T, p_{\sigma},$ and $p_{K}$ (defined in the proof of Theorem~\ref{thm:main} to ensure that Assumptions~\ref{ass:lipschitz_constant}, \ref{ass:lower_bound}, and \ref{ass:stochastic_variance_bounded} are satisfied and the scaled version of $F_{T,K,a}$ has the squared norm larger than $\varepsilon$ while the $T$\textsuperscript{th} is not discovered), we can show that
   \begin{align*}
     \textstyle t_B \gtrsim \frac{L \Delta}{n^{1 / K} \cdot \Delta^0(K, a) \cdot \ell_1(K, a) \cdot \varepsilon} \min\left\{\max\left\{h, \frac{h \sigma^2}{\varepsilon \cdot \gamma_{\infty}^2(K, a)}\right\}, \frac{\tau_{\textnormal{s}} d}{K}\right\},
   \end{align*}
   with high probability, where $\Delta^0(K, a), \ell_1(K, a),$ and $\gamma_{\infty}(K, a)$ are defined in Lemma~\ref{lemma:worst_function_properties}. The final step is to choose $K = \Theta\left(\log n\right)$ and $a = 1 + \nicefrac{1}{K}$ to obtain the result of Theorem~\ref{thm:main}.

\section{Additional Related Work}
\label{sec:additional}
While we focus on lower bounds in the context of stochastic optimization and compressed vectors in nonconvex settings, there is much related work in other domains and setups. The seminal works on lower bounds were done by \citet{nemirovskij1983problem, nesterov2018lectures}, where \citet{nesterov2018lectures} showed that the accelerated gradient descent \citep{nesterov1983method} is optimal in the convex setting using a quadratic “worst-case” function. In the nonconvex setting, \citet{carmon2020lower} provided an alternative function, described in the main part of the paper. For convex problems, \citet{woodworth2018graph} introduced the graph oracle, a generalization of the classical gradient oracle \citep{nemirovskij1983problem,nesterov2018lectures}, and established lower bounds for a broad class of parallel optimization methods. \citet{arjevani2020tight} further analyzed the delayed gradient descent method, which corresponds to Asynchronous SGD with constant iteration delays. \citet{tyurin2023optimal,tyurin2024optimal,tyurin2024shadowheart} proved lower bounds for methods in asynchronous settings. \citet{SPIDER,patel2022optimalcommunication} studied a different setting from Assumption~\ref{ass:stochastic_variance_bounded}, where they assumed the mean-squared smoothness property to enable the analysis of methods with variance reduction techniques \citep{SPIDER,cutkosky2019momentum}. \citet{woodworth2016tight} considered the finite-sum setting in the convex setting. \citet{woodworth2020local,woodworth2021min} proved that the min-max optimal algorithm for optimizing smooth convex objectives in the intermittent communication setting is the best of accelerated local and minibatch SGD, which leads to a similar conclusion to ours; however, their results are related to, but not directly comparable with ours, since we analyze the limited scalability of improving both stochastic noise and communication complexity through compressors. \citet{pmlr-v151-glasgow22a} provided sharp lower bounds for local SGD approaches in terms of iteration complexity. \citet{huang2022lower,he2023unbiased,gruntkowskaimproving} provided lower bounds for compression techniques, but in the heterogeneous setting.

\section{Auxiliary Facts and Notations}
\begin{definition}[Rand$K$]
    \label{def:rand_k}
    Assume that $S$ is a random subset of $[d]$ such that $|S| = K$ for some $K \in [d]$. A stochastic mapping $\cC\,:\, \R^d \times \mathbb{S}_{\nu} \rightarrow \R^d$ is called Rand$K$ if
\begin{align*}
\cC(x;S) = \frac{d}{K} \sum_{j \in S} x_j e_j,
\end{align*}
where $\{e_i\}_{i=1}^d$ denotes the standard unit basis. The set $S$ can be produced with a uniform sampling of $[d]$ without replacement.
\end{definition}

\subsection{Notations}

$\N \eqdef \{1, 2, \dots\};$ $\norm{x}$ is the output of the standard Euclidean norm for all $x \in \R^d$; $\inp{x}{y} = \sum_{i=1}^{d} x_i y_i$ is the standard dot product; $\norm{A}$ is the standard spectral/operator norm for all $A \in \R^{d \times d};$ $g = \cO(f):$ exist $C > 0$ such that $g(z) \leq C \times f(z)$ for all $z \in \mathcal{Z};$ $g = \Omega(f):$ exist $C > 0$ such that $g(z) \geq C \times f(z)$ for all $z \in \mathcal{Z};$ $g = \Theta(f):$ $g = \cO(f)$ and $g = \Omega(f);$ $g = \widetilde{\cO}(f), g = \widetilde{\Omega}(f), g = \widetilde{\Theta}(f):$ the same as $g = \cO(f), g = \Omega(f), g = \Theta(f),$ respectively, but up to logarithmic factors; $g \simeq h:$ $g$ and $h$ are equal up to universal positive constants; $g \gtrsim h:$ $g$ greater or equal to $h$ up to universal positive constants; $\cC$ is an unbiased compressor (Definition~\ref{def:unbiased_compression}); $\textnormal{supp}(v) = \{\, i \in [d] : v_i \neq 0 \,\};$ $h:$ maximum time (in seconds) for any worker to compute one stochastic gradient; $\tau_{\textnormal{s}}:$ communication time per coordinate from the server to any worker; $\tau_{\textnormal{w}}:$ communication time per coordinate from any worker to the server; 


\section{Lower Bound}
\label{sec:lower_bound}
\subsection{New Construction}
\label{sec:const}
For any $T, K \in \N,$ and $e \geq a > 1$ we define the function $F_{T,K,a} \,:\, \R^T \to \R$ such that
\begin{align}
  \label{eq:worst_case_copy}
  F_{T,K,a}(x) = -\sum_{i=1}^T \Psi_a(x_{i-K}) \dots \Psi_a(x_{i-2}) \Psi_a(x_{i-1})\Phi(x_i) + \sum_{i=1}^T \Gamma(x_i),
\end{align}
where $x_i$ is the $i$\textsuperscript{th} coordinate of a vector $x \in \R^T$ and
\begin{align*}
    \Psi_a(x) = \begin{cases}
        0, & x \leq 1/2, \\
        \exp\left(\log a \cdot \left(1 - \frac{1}{(2x - 1)^2}\right)\right), & x > 1/2,
    \end{cases}
    \qquad
    \Phi(x) = \sqrt{e} \int_{-\infty}^{x}e^{-\frac{1}{2}t^2}dt,
\end{align*}
and
\begin{align*}
  \Gamma(x) = \begin{cases}
    -x e^{1/x + 1},& x < 0, \\
    0, & x \geq 0.
\end{cases}
\end{align*}
We assume that $x_{0} = \dots = x_{-K + 1} \equiv 1.$ Importantly, throughout the lower bound analysis, we assume that $e \geq a > 1$, even if this assumption is not explicitly stated in all theorems.

We additionally define 
\begin{align*}
  &\textnormal{prog}^K(x) \eqdef \max \{i \geq 0\,|\,x_i \neq 0, x_{i - 1} \neq 0, \dots, x_{i - K + 1} \neq 0\} \\
  &(x_{0} = \dots = x_{-K + 1} \equiv 1),
\end{align*}
which extends $\textnormal{prog}(x) \equiv \textnormal{prog}^1(x) \eqdef \max \{i \geq~0\,|\,x_i \neq 0\} \,\, (x_0 \equiv 1).$

\subsection{Auxiliary Lemmas}
In this section, we list useful properties of the functions $\Phi,$ $\Gamma,$ $\Psi_a,$ and $F_{T,K,a}.$ We prove them in Section~\ref{sec:l_proof}.

\begin{lemma}[\citet{carmon2020lower}]
  \label{lemma:phi}
  Function $\Phi$ is twice differentiable and satisfies
  \begin{align*}
    0 \leq \Phi(x) \leq \sqrt{2 \pi e}, \quad 0 \leq \Phi'(x) \leq \sqrt{e}, \textnormal{ and } \abs{\Phi''(x)} \leq 27
  \end{align*}
  for all $x \in \R.$ Moreover, $\Phi'(x) > 1$ for all $-1 < x < 1.$
\end{lemma}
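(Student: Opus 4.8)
The plan is to reduce everything to explicit formulas for $\Phi'$ and $\Phi''$ and then to elementary one‑variable calculus. Since the integrand $t \mapsto e^{-t^2/2}$ is smooth and integrable, $\Phi$ is well defined, smooth, and by the fundamental theorem of calculus $\Phi'(x) = \sqrt{e}\, e^{-x^2/2}$, hence $\Phi''(x) = -\sqrt{e}\, x\, e^{-x^2/2}$. I would record these two identities at the outset; all four claimed bounds follow from them.

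For $\Phi'$: since $e^{-x^2/2} \in (0,1]$ for every $x \in \R$ (with value $1$ only at $x = 0$), we get $0 \le \Phi'(x) \le \sqrt{e}$. For $\Phi$ itself: because $\Phi' > 0$ everywhere, $\Phi$ is strictly increasing, so $\inf_x \Phi(x) = \lim_{x\to-\infty}\Phi(x) = 0$ and $\sup_x \Phi(x) = \lim_{x\to+\infty}\Phi(x) = \sqrt{e}\int_{-\infty}^{\infty} e^{-t^2/2}\,dt = \sqrt{e}\cdot\sqrt{2\pi} = \sqrt{2\pi e}$, using the standard Gaussian integral; hence $0 \le \Phi(x) \le \sqrt{2\pi e}$.

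For $\Phi''$: write $\abs{\Phi''(x)} = \sqrt{e}\,\abs{x}\,e^{-x^2/2}$ and maximize $g(x) \eqdef x e^{-x^2/2}$ on $[0,\infty)$ (the map $\abs{x}e^{-x^2/2}$ is even). Since $g'(x) = (1 - x^2)e^{-x^2/2}$ vanishes only at $x = 1$ on $(0,\infty)$ and $g(0) = 0 = \lim_{x\to\infty} g(x)$, the maximum is $g(1) = e^{-1/2}$, so $\abs{\Phi''(x)} \le \sqrt{e}\cdot e^{-1/2} = 1 \le 27$ (the stated constant is far from tight but suffices). Finally, for $-1 < x < 1$ we have $x^2 < 1$, so $e^{-x^2/2} > e^{-1/2}$ and therefore $\Phi'(x) = \sqrt{e}\, e^{-x^2/2} > \sqrt{e}\cdot e^{-1/2} = 1$.

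There is no genuine obstacle: the statement is a bundle of elementary estimates. The only points needing any care are justifying differentiation of the integral (immediate, since $\Phi$ is an antiderivative up to the factor $\sqrt{e}$), evaluating $\int_{\R} e^{-t^2/2}\,dt = \sqrt{2\pi}$, and the single‑variable optimization of $x e^{-x^2/2}$ — none of which is difficult.
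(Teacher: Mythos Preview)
Your proof is correct. The paper does not actually prove this lemma itself; it simply cites \citet{carmon2020lower} and states the result without argument, so there is no paper proof to compare against. Your elementary approach---computing $\Phi'(x)=\sqrt{e}\,e^{-x^2/2}$ and $\Phi''(x)=-\sqrt{e}\,x\,e^{-x^2/2}$ explicitly and reading off the bounds---is the natural one, and in fact you obtain the sharper estimate $\abs{\Phi''(x)}\le 1$, well below the stated constant $27$.
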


\begin{restatable}{lemma}{LEMMAONE}
  \label{lemma:gamma}
  Function $\Gamma$ is twice differentiable and satisfies
  \begin{align*}
    0 \leq \Gamma(x), \quad -e < \Gamma'(x) \leq 0, \textnormal{ and } 0 \leq \Gamma''(x) \leq 27 e^{-2}
  \end{align*}
  for all $x \in \R.$ Moreover, $\Gamma'(x) \leq -2$ for all $x \leq -1.$
\end{restatable}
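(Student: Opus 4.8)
The plan is to treat the two branches of the piecewise definition separately and then verify that they glue to a $C^2$ function at $x=0$. On $(-\infty,0)$ the function $\Gamma(x) = -x\,e^{1/x+1}$ is manifestly smooth, and a direct differentiation (the second derivative telescopes neatly) gives
\begin{align*}
\Gamma'(x) = e^{1/x+1}\Bigl(\tfrac{1}{x} - 1\Bigr), \qquad \Gamma''(x) = -\frac{1}{x^3}\,e^{1/x+1} \qquad (x<0),
\end{align*}
while $\Gamma \equiv \Gamma' \equiv \Gamma'' \equiv 0$ on $[0,\infty)$. From these formulas the sign statements are immediate: for $x<0$ we have $-x>0$, $\tfrac1x-1<0$, and $-\tfrac1{x^3}>0$, so $\Gamma(x)>0$, $\Gamma'(x)<0$, $\Gamma''(x)>0$; combined with the vanishing branch this yields $\Gamma\ge 0$, $\Gamma'\le 0$, $\Gamma''\ge 0$ on all of $\R$.

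For the quantitative bounds on $\Gamma'$ I would substitute $t=1/x\in(-\infty,0)$ and write $\Gamma'(x)=-e\,g(t)$ with $g(t)=e^{t}(1-t)$. Since $g'(t)=-t\,e^{t}>0$ for $t<0$, $g$ increases on $(-\infty,0)$ with $\lim_{t\to0^-}g(t)=1$, hence $0<g(t)<1$ there and $-e<\Gamma'(x)<0$; including the flat branch gives $-e<\Gamma'\le 0$ everywhere. The refined estimate for $x\le -1$ follows because then $t=1/x\in[-1,0)$, on which $g(t)\ge g(-1)=2/e$, so $\Gamma'(x)=-e\,g(t)\le -2$. For $\Gamma''$, substituting $s=-1/x>0$ turns the formula above into $\Gamma''(x)=e\,s^{3}e^{-s}$; maximizing $h(s)=s^{3}e^{-s}$ over $s>0$ (its derivative $s^{2}e^{-s}(3-s)$ vanishes only at $s=3$) gives $h(3)=27e^{-3}$, hence $\Gamma''(x)\le e\cdot 27 e^{-3}=27e^{-2}$, which is exactly the claimed constant.

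The last and most delicate step is checking that $\Gamma$ is genuinely $C^2$ at the junction $x=0$ (it is in fact $C^\infty$, but only $C^2$ is needed). Here I would invoke the standard fact that $e^{1/x}\to 0$ as $x\to 0^-$ faster than any negative power of $x$ diverges, so that every expression of the form (rational in $x$)$\,\cdot\,e^{1/x+1}$ tends to $0$. Concretely, $\lim_{x\to0^-}(\Gamma(x)-\Gamma(0))/x = \lim_{x\to0^-}-e^{1/x+1}=0$, matching the right derivative, so $\Gamma'(0)=0$; then $\lim_{x\to0^-}\Gamma'(x)/x=\lim_{x\to0^-}e^{1/x+1}(x^{-2}-x^{-1})=0$ gives $\Gamma''(0)=0$; and $\lim_{x\to0^-}\Gamma'(x)=\lim_{x\to0^-}\Gamma''(x)=0$ shows $\Gamma'$ and $\Gamma''$ are continuous across $0$. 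Since the boundary value $0$ lies inside each claimed interval $[0,\infty)$, $(-e,0]$, and $[0,27e^{-2}]$, and since the bounds have already been verified on $(-\infty,0)$ and trivially hold on $(0,\infty)$, the lemma follows. I expect essentially all of the genuine care to be concentrated in this gluing argument; the remainder is bookkeeping with the substitutions $t=1/x$ and $s=-1/x$ together with elementary single-variable calculus.
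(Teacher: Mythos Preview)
Your proposal is correct and follows essentially the same approach as the paper: compute $\Gamma'(x)=e^{1/x+1}(1/x-1)$ and $\Gamma''(x)=-e^{1/x+1}/x^3$ for $x<0$, use monotonicity (the paper via $\Gamma''>0$, you via the substitution $t=1/x$) to obtain the bounds on $\Gamma'$, optimize to locate the maximum of $\Gamma''$ at $x=-1/3$, and verify $C^2$-gluing at $0$ via the rapid decay of $e^{1/x}$. Your write-up is in fact more explicit than the paper's, which states the maximum $27e^{-2}$ without the computation and leaves the $\Gamma'(x)\le -2$ claim implicit in the monotonicity of $\Gamma'$ together with $\Gamma'(-1)=-2$.
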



\begin{restatable}{lemma}{LEMMATWO}
  \label{lemma:psi}
  Function $\Psi_a$ is twice differentiable and satisfies
  \begin{align*}
    0 \leq \Psi_a(x) < a, \quad 0 \leq \Psi_a'(x) \leq \frac{2 e}{\sqrt{\log a}}, \textnormal{ and } \abs{\Psi''_a(x)} \leq \frac{56 e}{\log a}
  \end{align*}
  for all $x \in \R$ and $1 < a \leq e.$ Moreover, $\Psi_a(x) \geq 1$ for all $x \geq 1$ and $1 < a \leq e.$
\end{restatable}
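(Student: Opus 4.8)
The plan is a direct, elementary computation. I would analyze $\Psi_a$ on the three pieces $x<1/2$, $x=1/2$, and $x>1/2$ separately, and reduce each of the three quantitative bounds to maximizing an explicit one-variable function after the substitution $s\eqdef\log a\,(2x-1)^{-2}\in(0,\infty)$, under which $\Psi_a(x)=a\,e^{-s}$.

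\textbf{Step 1: regularity and value bounds.} On $\{x<1/2\}$ the function is identically $0$, and on $\{x>1/2\}$ it is a composition of $C^\infty$ maps, so the only point to check is the gluing at $x=1/2$. Writing $u=2x-1\to 0^+$ we have $\Psi_a=a\,e^{-\log a/u^2}$, and because $\log a>0$ the factor $e^{-\log a/u^2}$ decays faster than any power of $u$; hence $\Psi_a$ and its first two one-sided derivatives --- each a finite combination of terms $u^{-k}e^{-\log a/u^2}$ with $k\ge 0$ --- tend to $0$, matching the zero left branch, so $\Psi_a\in C^2(\R)$ (indeed $C^\infty$). For the value bounds: on $\{x>1/2\}$ the exponent $\log a\,(1-(2x-1)^{-2})$ is strictly less than $\log a$, giving $0<\Psi_a(x)<e^{\log a}=a$; and for $x\ge 1$ we have $(2x-1)^{-2}\le 1$, so the exponent is $\ge 0$ (again using $\log a>0$), giving $\Psi_a(x)\ge 1$, with equality at $x=1$.

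\textbf{Step 2: first derivative.} Differentiating the exponential, for $x>1/2$,
\[
\Psi_a'(x)=4\log a\,(2x-1)^{-3}\,\Psi_a(x)\ge 0,
\]
and $\Psi_a'\equiv 0$ on $\{x\le 1/2\}$, so $\Psi_a'\ge 0$ everywhere. Substituting $s=\log a\,(2x-1)^{-2}$ and $\Psi_a(x)=a\,e^{-s}$ gives $\Psi_a'(x)=\frac{4a}{\sqrt{\log a}}\,s^{3/2}e^{-s}$. The map $s\mapsto s^{3/2}e^{-s}$ is maximized at $s=3/2$ with value $(3/2)^{3/2}e^{-3/2}$, and $4(3/2)^{3/2}e^{-3/2}\le 2$, so with $a\le e$ we obtain $\Psi_a'(x)\le\frac{2e}{\sqrt{\log a}}$.

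\textbf{Step 3: second derivative.} Differentiating the previous display (product rule), for $x>1/2$,
\[
\Psi_a''(x)=\Bigl(-24\log a\,(2x-1)^{-4}+16(\log a)^2(2x-1)^{-6}\Bigr)\Psi_a(x)=\frac{8a}{\log a}\,s^2(2s-3)e^{-s},
\]
so $|\Psi_a''(x)|=\frac{8a}{\log a}\,s^2|2s-3|\,e^{-s}$, and $\Psi_a''\equiv 0$ on $\{x\le 1/2\}$. It remains to bound $\sup_{s>0}s^2|2s-3|e^{-s}$: splitting at $s=3/2$, the interior critical points are the roots of $2s^2-9s+6=0$, and a short check shows the supremum is a small absolute constant (below $2$); together with $a\le e$ this yields $|\Psi_a''(x)|\le\frac{56e}{\log a}$.

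\textbf{Expected obstacle.} There is no deep difficulty here. The two points that require genuine care are the $C^2$-gluing at $x=1/2$ in Step 1, and --- more importantly --- correctly tracking the $\log a$-dependence so that the bounds degrade like $(\log a)^{-1/2}$ and $(\log a)^{-1}$ as $a\downarrow 1$; this $a$-dependence is the only feature that is new relative to the $a=e$ construction of \citet{carmon2020lower}. Since the target constants $2e$ and $56e$ are deliberately loose, the one-variable maximizations in Steps 2--3 only need crude estimates, not exact optimizers.
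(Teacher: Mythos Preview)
Your proof is correct and follows essentially the same approach as the paper: direct computation of $\Psi_a'$ and $\Psi_a''$, followed by the substitution $s=\log a\,(2x-1)^{-2}$ to reduce each bound to a one-variable maximization over $s>0$. The only cosmetic differences are that the paper uses the equivalent substitution $t=\sqrt{s}$ for the first-derivative bound (invoking $t^3e^{-t^2}\le 1/2$) and, for the second derivative, applies the triangle inequality to the two terms and bounds $t^2e^{-t}\le 1$, $t^3e^{-t}\le 2$ separately to reach exactly $\frac{24a}{\log a}+\frac{32a}{\log a}=\frac{56a}{\log a}$, whereas you keep the factored form $s^2(2s-3)e^{-s}$ and bound its supremum directly.
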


\begin{restatable}{lemma}{LEMMATHREE}
  \label{lemma:prog}
  For all $x \in \R^T,$ $\textnormal{supp}(\nabla F_{T,K,a}(x)) \subseteq \{1, \dots, \textnormal{prog}^K(x) + 1\} \cup \textnormal{supp}(x),$ where $\textnormal{supp}(v) \eqdef \{\, i \in [d] : v_i \neq 0 \,\}.$ 
\end{restatable}

\begin{restatable}{lemma}{LEMMAFOUR}
  \label{lemma:worst_function_general}
  For all $x \in \R^T,$ if $\textnormal{prog}^K(x) < T,$ then $\norm{\nabla F_{T,K,a}(x)} > 1.$
\end{restatable}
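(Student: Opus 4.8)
The plan is to show that whenever $\textnormal{prog}^K(x) < T$, at least one coordinate of $\nabla F_{T,K,a}(x)$ has absolute value exceeding $1$. Let $j \eqdef \textnormal{prog}^K(x) + 1 \leq T$; by definition of $\textnormal{prog}^K$, the indices $x_{j-1}, x_{j-2}, \dots, x_{j-K}$ are all nonzero (here using the convention $x_0 = \dots = x_{-K+1} \equiv 1$ to cover the case $j \le K$), but $x_j$ itself may be anything. I would compute $\partial F_{T,K,a} / \partial x_j$ explicitly. From \eqref{eq:worst_case_copy}, the coordinate $x_j$ appears in exactly two kinds of terms: the term $-\Psi_a(x_{j-K})\cdots\Psi_a(x_{j-1})\Phi(x_j)$, and the term $\Gamma(x_j)$, plus — if $j + \ell \le T$ for some $1 \le \ell \le K-1$ — inside factors $\Psi_a(x_j)$ of later blocks. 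So
\[
\frac{\partial F_{T,K,a}}{\partial x_j}(x) = -\Big(\prod_{r=1}^{K}\Psi_a(x_{j-r})\Big)\Phi'(x_j) + \Gamma'(x_j) - \sum_{\ell=1}^{K-1}\Psi_a'(x_j)\Big(\prod_{\substack{r=1\\ r\neq \ell}}^{K}\Psi_a(x_{j+\ell-r})\Big)\Phi(x_{j+\ell})\, \mathbbm{1}[j+\ell \le T].
\]
The key structural fact is that each $\Psi_a$ is nonnegative (Lemma~\ref{lemma:psi}), $\Phi \ge 0$ and $\Phi' \ge 0$, $\Psi_a' \ge 0$, and $\Gamma' \le 0$ (Lemma~\ref{lemma:gamma}); hence \emph{all three} contributions to $\partial F_{T,K,a}/\partial x_j$ are nonpositive, so there is no cancellation and
\[
\left|\frac{\partial F_{T,K,a}}{\partial x_j}(x)\right| \;\ge\; \Big(\prod_{r=1}^{K}\Psi_a(x_{j-r})\Big)\Phi'(x_j) \;+\; \big|\Gamma'(x_j)\big|.
\]

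Now I split on the sign/size of $x_j$. If $x_j \le -1$, then $|\Gamma'(x_j)| \ge 2 > 1$ by Lemma~\ref{lemma:gamma} and we are done. Otherwise $x_j > -1$; I would further note that since $x_{j-1},\dots,x_{j-K}$ are all nonzero but the construction should force the "active" coordinates to lie in $(-1,1]$ or similar — more precisely, I want to argue each $\Psi_a(x_{j-r}) \ge 1$. This holds directly when $x_{j-r} \ge 1$ (Lemma~\ref{lemma:psi}) or when $x_{j-r}$ is one of the boundary $\equiv 1$ values. For the interior case, the standard argument (as in Carmon et al.) is an induction: one shows by the chain structure that if $x_{j-r} \ne 0$ then in fact $x_{j-r} > 1/2$ — indeed, if some earlier coordinate sat in $(0,1/2]$ then $\Psi_a$ of it would be $0$, killing the block that could have made the \emph{next} coordinate nonzero, contradicting that the chain of nonzero coordinates reached index $j-1$. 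Actually the cleaner route, which I expect the paper uses, is: one does not need $\Psi_a(x_{j-r}) \ge 1$ for \emph{all} $r$, only a lower bound on the product; but for $-1 < x_j < 1$ we have $\Phi'(x_j) > 1$ (Lemma~\ref{lemma:phi}), so it suffices that the product $\prod_r \Psi_a(x_{j-r}) \ge 1$, which follows from each factor being $\ge 1$. So the remaining case is $x_j \ge 1$: then directly $\textnormal{prog}^1(x) \ge j > \textnormal{prog}^K(x)$, meaning $x_j,\dots$ — wait, this needs care, since $x_j \ge 1$ alone with the previous $K$ coordinates nonzero would give $\textnormal{prog}^K(x) \ge j$, a contradiction. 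Hence the case $x_j \ge 1$ cannot occur, and we only ever face $x_j \in (-1,1)$ (done via $\Phi' > 1$) or $x_j \le -1$ (done via $\Gamma'$), possibly also $x_j \in [-1, ?]$ boundary which I would absorb by slightly adjusting thresholds.

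The main obstacle I anticipate is exactly the induction establishing that the nonzero coordinates preceding index $j$ all satisfy $x_{j-r} \ge$ the threshold $1/2$ (so that $\Psi_a(x_{j-r}) \ge 1$, or at least the product of the $K$ relevant factors is $\ge 1$), because a priori a "zero-respecting but adversarial" point $x$ could have some $x_{j-r} \in (0, 1/2)$ where $\Psi_a$ vanishes, which would make the first term drop out. Resolving this requires carefully using the definition of $\textnormal{prog}^K$ together with the nested-product structure of $F_{T,K,a}$: the only way coordinate $j-1$ got to be "$K$-active" (part of a length-$K$ run of nonzeros ending somewhere $\ge j-1$) is if the coordinates feeding it were themselves large enough — this is the generalization of Carmon et al.'s Lemma, and formalizing the multi-lag ($K$-step) version cleanly is the crux. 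Once that is in hand, combining $\prod_r \Psi_a(x_{j-r}) \ge 1$ with $\Phi'(x_j) > 1$ on $(-1,1)$, and $|\Gamma'(x_j)| \ge 2$ on $(-\infty,-1]$, and the impossibility of $x_j \ge 1$, closes all cases and yields $\|\nabla F_{T,K,a}(x)\| \ge |\partial F_{T,K,a}/\partial x_j(x)| > 1$.
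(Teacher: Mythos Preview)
Your overall structure --- compute $\partial F_{T,K,a}/\partial x_j$, use the sign pattern of $\Psi_a,\Psi_a',\Phi,\Phi',\Gamma'$ to rule out cancellation, then split on whether $x_j \le -1$ (use $|\Gamma'| \ge 2$) or $x_j \in (-1,1)$ (use $\Phi'>1$ and the product of $\Psi_a$'s $\ge 1$) --- matches the paper. The genuine gap is your \emph{choice of index}. Setting $j = \textnormal{prog}^K(x)+1$ only guarantees that $x_{j-1},\dots,x_{j-K}$ are \emph{nonzero} (and in fact forces $x_j=0$); your $\Phi'$--case bound then reduces to $\prod_{r=1}^K \Psi_a(x_{j-r})\cdot\Phi'(0)$, and nothing prevents the predecessors from lying in $(0,\tfrac12]$ where $\Psi_a$ vanishes, or in $(\tfrac12,1)$ where $\Psi_a<1$. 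Your proposed inductive repair (``the only way coordinate $j-1$ got to be $K$-active is if the coordinates feeding it were large enough'') is reasoning about algorithm dynamics, but the lemma is a statement about \emph{arbitrary} $x\in\R^T$: for example $x_1=\dots=x_K=0.1$ and $x_{K+1}=\dots=x_T=0$ has $\textnormal{prog}^K(x)=K$, yet $\partial F_{T,K,a}/\partial x_{K+1}(x)=0$ because $\Psi_a(0.1)=\Psi_a'(0)=\Gamma'(0)=0$.

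The paper's fix is simply a different index: take the \emph{smallest} $j\in[T]$ with $x_j<1$ and $x_{j-1}\ge 1,\dots,x_{j-K}\ge 1$. Existence follows from the boundary values $x_0=\dots=x_{-K+1}\equiv 1$: if no such $j$ existed one would get inductively $x_1\ge 1$, then $x_2\ge 1$, \dots, then $x_T\ge 1$, contradicting $\textnormal{prog}^K(x)<T$. With this $j$ you have $\Psi_a(x_{j-r})\ge 1$ for each $r$ directly from Lemma~\ref{lemma:psi}, and your two cases $x_j\in(-1,1)$ and $x_j\le -1$ close exactly as you outlined. So the only missing idea is how to pick $j$: not the first index past the $K$-active prefix, but the first index where the value drops below $1$ while its $K$ predecessors are still $\ge 1$.
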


\begin{restatable}{lemma}{LEMMAFIVE}
  \label{lemma:delta}
  Function $F_{T,K,a}$ satisfies
  \begin{align*}
    F_{T,K,a}(0) - \inf_{x \in \R^T} F_{T,K,a}(x) \leq \Delta^0(K, a) \cdot T,
  \end{align*}
  where $\Delta^0(K, a) \eqdef \sqrt{2 \pi e} \cdot a^K.$
\end{restatable}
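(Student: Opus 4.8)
The plan is to bound $F_{T,K,a}(0) - \inf_x F_{T,K,a}(x)$ by bounding each of the two sums in \eqref{eq:worst_case_copy} separately. First I would handle the $\Gamma$ sum: since Lemma~\ref{lemma:gamma} gives $\Gamma(x) \geq 0$ for all $x$ and clearly $\Gamma(0) = 0$, this term only \emph{increases} $F_{T,K,a}$ relative to removing it, so it contributes nothing to the upper bound on $F_{T,K,a}(0) - \inf_x F_{T,K,a}(x)$; more precisely, $F_{T,K,a}(0) = -\sum_{i=1}^T \Psi_a(1)^K \Phi(0) + 0$ (all the shifted coordinates $x_0,\dots,x_{-K+1}$ equal $1$, and $x_i=0$ for $i\geq 1$) and for the infimum I can drop the nonnegative $\Gamma$ terms, so $\inf_x F_{T,K,a}(x) \geq \inf_x\left(-\sum_{i=1}^T \Psi_a(x_{i-K})\cdots\Psi_a(x_{i-1})\Phi(x_i)\right)$.

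Next I would bound the main (product) sum termwise. Each summand is $\Psi_a(x_{i-K})\cdots\Psi_a(x_{i-1})\Phi(x_i)$, a product of $K$ factors $\Psi_a$ and one factor $\Phi$. By Lemma~\ref{lemma:psi}, $0 \leq \Psi_a(x) < a$ for all $x$, and by Lemma~\ref{lemma:phi}, $0 \leq \Phi(x) \leq \sqrt{2\pi e}$. Hence each summand lies in $[0, a^K\sqrt{2\pi e})$, so $0 \leq \sum_{i=1}^T \Psi_a(x_{i-K})\cdots\Psi_a(x_{i-1})\Phi(x_i) \leq \sqrt{2\pi e}\, a^K\, T$ for every $x \in \R^T$. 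Consequently $-\sum_{i=1}^T \Psi_a(\cdots)\Phi(x_i) \geq -\sqrt{2\pi e}\, a^K\, T$, which gives $\inf_x F_{T,K,a}(x) \geq -\sqrt{2\pi e}\, a^K\, T$.

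Finally I would combine the two: $F_{T,K,a}(0) - \inf_x F_{T,K,a}(x) \leq F_{T,K,a}(0) + \sqrt{2\pi e}\, a^K\, T$. Since $F_{T,K,a}(0) = -\Psi_a(1)^K \Phi(0)\, T \leq 0$ (both $\Psi_a(1) \geq 1 > 0$ by Lemma~\ref{lemma:psi} and $\Phi(0) > 0$), we get $F_{T,K,a}(0) - \inf_x F_{T,K,a}(x) \leq \sqrt{2\pi e}\, a^K\, T = \Delta^0(K,a)\, T$, as claimed.

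I do not expect a serious obstacle here; this lemma is essentially a direct consequence of the pointwise bounds on $\Phi$, $\Psi_a$, $\Gamma$ established in Lemmas~\ref{lemma:phi}--\ref{lemma:psi}. The only point requiring a little care is making sure the sign bookkeeping is right — that $\Gamma \geq 0$ pushes the function \emph{up} (so it can be dropped when lower-bounding the infimum) and that $F_{T,K,a}(0) \leq 0$ — but these are immediate from nonnegativity of $\Gamma$, $\Psi_a$, and $\Phi$. No use of the derivative bounds or of the more delicate ``chain'' structure (Lemmas~\ref{lemma:prog}, \ref{lemma:worst_function_general}) is needed for this particular estimate.
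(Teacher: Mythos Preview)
Your proof is correct and follows the same approach as the paper's: bound $F_{T,K,a}(0) \leq 0$ via nonnegativity of $\Psi_a, \Phi$ and $\Gamma(0)=0$, then bound $F_{T,K,a}(x) \geq -T\sqrt{2\pi e}\,a^K$ via $\Gamma \geq 0$, $0 \leq \Psi_a < a$, $0 \leq \Phi \leq \sqrt{2\pi e}$. One harmless slip: $F_{T,K,a}(0)$ is not $-\Psi_a(1)^K\Phi(0)\cdot T$ but just $-\Psi_a(1)^K\Phi(0)$, since for $i\geq 2$ the $i$th summand contains a factor $\Psi_a(x_{i-1})$ with $i-1 \geq 1$, hence $\Psi_a(0)=0$; this does not affect the argument because you only use $F_{T,K,a}(0)\leq 0$.
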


\begin{restatable}{lemma}{LEMMASIX}
  \label{lemma:bound_grad}
  For all $x \in \R^T,$ $\norm{\nabla F_{T,K,a}(x)}_{\infty} \leq \gamma_{\infty}(K,a),$ where $\gamma_{\infty}(K,a) \eqdef 6 \sqrt{2 \pi} e^{3/2} \cdot \frac{K a^K}{\sqrt{\log a}}.$
\end{restatable}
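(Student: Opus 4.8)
The plan is to compute $\partial F_{T,K,a}/\partial x_j$ explicitly for an arbitrary coordinate $j\in[T]$ and bound it by plugging in the uniform scalar estimates from Lemmas~\ref{lemma:phi}, \ref{lemma:gamma}, and \ref{lemma:psi}. First I would determine where $x_j$ occurs in \eqref{eq:worst_case_copy}. It enters in exactly three ways: (a) as the $\Phi$-argument of the block $i=j$, producing the term $-\Psi_a(x_{j-K})\cdots\Psi_a(x_{j-1})\,\Phi'(x_j)$; (b) as a $\Psi_a$-argument in each block $i=j+k$ with $1\le k\le\min\{K,\,T-j\}$ — indeed block $i$ has $\Psi_a$-arguments $x_{i-K},\dots,x_{i-1}$, so it contains $x_j$ precisely when $j+1\le i\le j+K$ — contributing $-\big(\prod_{m=i-K,\,m\neq j}^{\,i-1}\Psi_a(x_m)\big)\,\Psi_a'(x_j)\,\Phi(x_{j+k})$ for each such $k$; and (c) through $\Gamma(x_j)$ in the second sum, contributing $\Gamma'(x_j)$. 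Note that in (b) the coordinate $x_j$ sits inside exactly one $\Psi_a$ factor of the block, leaving $K-1$ undifferentiated $\Psi_a$ factors, and the fixed boundary coordinates $x_0=\dots=x_{-K+1}\equiv1$ cause no trouble since $\Psi_a(1)=1$.

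Next I would bound the three groups using the scalar inequalities. For group (a): it is a product of $K$ factors $\Psi_a<a$ and one factor $\Phi'\in[0,\sqrt e]$, hence its absolute value is at most $\sqrt e\,a^K$. For group (b): each block contributes $K-1$ factors $\Psi_a<a$, one factor $\Psi_a'\le 2e/\sqrt{\log a}$, and one factor $\Phi\in[0,\sqrt{2\pi e}]$; summing over the at most $K$ admissible values of $k$ gives a bound of $K\cdot a^{K-1}\cdot\frac{2e}{\sqrt{\log a}}\cdot\sqrt{2\pi e}=\frac{2\sqrt{2\pi}\,e^{3/2}\,K\,a^{K-1}}{\sqrt{\log a}}$. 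For group (c): $\lvert\Gamma'(x_j)\rvert<e$ by Lemma~\ref{lemma:gamma}.

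Finally I would collect the three bounds, using $1<a\le e$ (so $0<\log a\le1$ and $1/\sqrt{\log a}\ge1$), $a^{K-1}\le a^K$, $K\ge1$, and $a^K\ge1$. Each of the three contributions is then at most $2\sqrt{2\pi}\,e^{3/2}\cdot\frac{K\,a^K}{\sqrt{\log a}}$: the first because $\sqrt e\le 2\sqrt{2\pi}\,e^{3/2}$; the second immediately from $a^{K-1}\le a^K$; the third because $e\le 2\sqrt{2\pi}\,e^{3/2}$. Adding the three gives $\lvert\partial F_{T,K,a}/\partial x_j(x)\rvert\le 6\sqrt{2\pi}\,e^{3/2}\cdot\frac{K\,a^K}{\sqrt{\log a}}=\gamma_\infty(K,a)$, and since $j$ was arbitrary this yields $\norm{\nabla F_{T,K,a}(x)}_{\infty}\le\gamma_\infty(K,a)$. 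I do not expect any genuine obstacle: the only delicate point is the combinatorial bookkeeping in group (b) — correctly identifying that $x_j$ appears only in the blocks $i=j+1,\dots,j+K$ (truncated at $T$), exactly once per block, and counting the remaining $K-1$ undifferentiated $\Psi_a$ factors — after which the estimate is a direct substitution of the uniform bounds.
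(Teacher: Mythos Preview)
Your proposal is correct and follows essentially the same approach as the paper: compute the partial derivative $\partial F_{T,K,a}/\partial x_j$ (which is exactly the formula \eqref{eq:grad} derived in the proof of Lemma~\ref{lemma:worst_function_general}), bound the $\Phi'$-term by $a^K\sqrt{e}$, the $K$ terms with $\Psi_a'$ by $K\,a^{K-1}\sqrt{2\pi e}\cdot\frac{2e}{\sqrt{\log a}}$, and $|\Gamma'|$ by $e$, then absorb everything into $6\sqrt{2\pi}\,e^{3/2}\,\frac{K a^K}{\sqrt{\log a}}$ using $1<a\le e$ and $K\ge 1$. Your bookkeeping in group~(b) and the final collection step are slightly more explicit than the paper's, but the argument is identical.
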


\begin{restatable}{lemma}{LEMMASEVEN}
  \label{lemma:smooth}
  The function $F_{T,K,a}$ is $\ell_1(K, a)$--smooth, i.e., $\norm{\nabla^2 F_{T,K,a}(x)} \leq \ell_1(K, a)$
  for all $x \in \R^T,$ where $\ell_1(K, a) \eqdef 12 \sqrt{2 \pi} e^{5/2} \cdot \frac{K^2 a^{K}}{\log a}.$
\end{restatable}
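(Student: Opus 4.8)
Since $\Psi_a$, $\Phi$, and $\Gamma$ are twice differentiable with bounded second derivatives (Lemmas~\ref{lemma:phi}, \ref{lemma:gamma}, \ref{lemma:psi}), the same holds for $F_{T,K,a}$, and $\ell_1(K,a)$--smoothness is equivalent to the uniform bound $\norm{\nabla^2 F_{T,K,a}(x)} \le \ell_1(K,a)$ for all $x \in \R^T$. As the Hessian is symmetric, I would control it through its maximal absolute row sum, using $\norm{A} \le \sqrt{\norm{A}_1\,\norm{A}_\infty} = \norm{A}_\infty$ for symmetric $A$; this keeps every estimate scalar.

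Write $F_{T,K,a} = -\sum_{i=1}^T g_i + \sum_{i=1}^T \Gamma(x_i)$ with $g_i(x) \eqdef \Psi_a(x_{i-K})\cdots\Psi_a(x_{i-1})\Phi(x_i)$ and the convention $x_0 = \dots = x_{-K+1} \equiv 1$. Each $g_i$ depends only on the $K+1$ coordinates $\{i-K,\dots,i\}$ (entries with nonpositive index are the constant $\Psi_a(1)\in[1,a)$ and contribute no derivative), so $(\nabla^2 F_{T,K,a})_{jk}$ is nonzero only when $\abs{j-k}\le K$, and for each pair $(j,k)$ at most $K+1$ of the $g_i$ contribute. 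The core estimate is a bound on the second partials of a single $g_i$: a second derivative $\partial_{x_j}\partial_{x_k}g_i$ is obtained from the product defining $g_i$ by replacing one factor by its second derivative, or two distinct factors by their first derivatives. Bounding these via $0\le\Psi_a<a$ together with the bounds on $\Psi_a',\Psi_a'',\Phi,\Phi',\Phi''$ from Lemmas~\ref{lemma:psi} and \ref{lemma:phi}, and the elementary facts $a>1$ and $\log a\le 1$, each such term is at most a universal constant times $a^K/\log a$; summing over the $k$ in the support of $g_i$ then shows every row of $\nabla^2 g_i$ has absolute sum at most a universal constant times $K a^K/\log a$, the leading contribution coming from differentiating two distinct $\Psi_a$ factors (a factor $(\Psi_a')^2$ times $a^{K-2}\sqrt{2\pi e}$, summed over the $K-1$ partners).

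Assembling, for every row index $j$,
\begin{align*}
  \sum_k \abs{(\nabla^2 F_{T,K,a}(x))_{jk}} \;\le\; \sum_{i\,:\, j \in \{i-K,\dots,i\}} \Big( \sum_k \abs{\partial_{x_j}\partial_{x_k} g_i(x)} \Big) + \abs{\Gamma''(x_j)},
\end{align*}
where the outer sum has at most $K+1$ terms and $\abs{\Gamma''}\le 27e^{-2}$ by Lemma~\ref{lemma:gamma}. Combining with the per-row bound on $\nabla^2 g_i$ yields $\norm{\nabla^2 F_{T,K,a}(x)} \le \norm{\nabla^2 F_{T,K,a}(x)}_\infty \le (\text{universal constant})\cdot K^2 a^K/\log a$ uniformly in $x$; retaining the dominant coefficient $4\sqrt{2\pi}e^{5/2}$ while folding the lower-order-in-$K$ pieces (those carrying an extra $a^{-1}$ or $1/\sqrt{\log a}$, plus the $\Gamma''$ term, using $K\ge 1$) into it gives the claimed $\ell_1(K,a)=12\sqrt{2\pi}e^{5/2}\,K^2 a^K/\log a$.

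The conceptually routine but genuinely delicate step is the last one: tracking the several families of second-partial terms (two $\Psi_a$-derivatives, a $\Psi_a$-derivative with a $\Phi$-derivative, a single $\Psi_a''$, a single $\Phi''$), counting correctly how many land in a given row once the $(K+1)$-fold overlap of the supports of the $g_i$ is accounted for, handling the boundary blocks $i\le K$ where some $\Psi_a$ factors are frozen at $1$, and verifying that the $K$-dependence is exactly $K^2$ (the $\Psi_a$-pair term scales as $K(K-1)$, the rest as $O(K)$) so that the accumulated constant is precisely $12\sqrt{2\pi}e^{5/2}$. It is possible that sharp forms of the $\Psi_a$ derivative bounds (obtainable from the closed-form expression in \eqref{eq:new_psi}) are needed here rather than the cruder constants stated for exposition in Lemma~\ref{lemma:psi}.
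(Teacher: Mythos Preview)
Your approach is essentially the paper's: bound the individual second partials using Lemmas~\ref{lemma:phi}, \ref{lemma:gamma}, \ref{lemma:psi}, then exploit that the Hessian is $(2K+1)$--banded to pass from entrywise control to a spectral-norm bound of order $K^2 a^K/\log a$. The only organizational difference is that the paper computes $\partial_{x_j}\partial_{x_k} F_{T,K,a}$ directly (its equations~\eqref{eq:flsmMD}--\eqref{eq:DJnuJLUfxG} already sum over the contributing blocks, yielding an $O(K a^K/\log a)$ bound per entry) and then invokes the banded-matrix inequality $\norm{H}\le(2K+1)\max_{i,j}\abs{H_{ij}}$, which it proves via Cauchy--Schwarz; you instead decompose into the block functions $g_i$, bound each row of $\nabla^2 g_i$, and sum the $\le K+1$ overlapping blocks using $\norm{H}\le\norm{H}_\infty$. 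These are the same bookkeeping done in a different order---one factor of $K$ coming from the bandwidth and one from the number of blocks touching a given entry either way---and your $\norm{\cdot}_\infty$ route is the more standard statement of the same fact. Your caution about the exact constant $12\sqrt{2\pi}e^{5/2}$ is warranted: the paper simply combines its entry bounds with $(2K+1)\le 3K$, and the off-diagonal constant $4\sqrt{2\pi}e^{5/2}$ drives the final value.
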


\subsection{Proof of lemmas}
\label{sec:l_proof}

\LEMMAONE*

\begin{proof}
  The first fact is due to $\lim\limits_{\Delta \to 0} \frac{\Gamma(\Delta)}{\Delta} = 0,$ $\Gamma'(0) = 0,$ and $\lim\limits_{\Delta \to 0} \frac{\Gamma'(\Delta)}{\Delta} = 0.$ $\Gamma$ is clearly non-negative. Next, for all $x \leq 0,$
  \begin{align*}
    \Gamma'(x) = - e^{1/x + 1} + \frac{e^{1/x + 1}}{x}
  \end{align*}
  and
  \begin{align*}
    \Gamma''(x) = - \frac{e^{1/x + 1}}{x^3}.
  \end{align*}
  Thus, $\Gamma'$ is strongly increasing for all $x \leq 0,$ and $\lim\limits_{x \to -\infty} \Gamma'(x) = -e < \Gamma'(x) \leq 0.$ Next, $\Gamma''(x) \geq 0$ for all $x \leq 0,$ and $\max\limits_{x \leq 0} \Gamma''(x) = 27 e^{-2}$ for all $x \leq 0.$
\end{proof}

\LEMMATWO*

\begin{proof}
  The differentiability at $x = \frac{1}{2}$ follows from $\lim\limits_{\Delta \to 0} \frac{\Psi_a(\frac{1}{2} + \Delta)}{\Delta} = 0$ for all $a > 1.$ For all $x \leq \frac{1}{2},$ $\Psi'_a(x) = 0.$ For all $x > \frac{1}{2},$ we get
  \begin{align*}
    0 \leq \Psi'_a(x) 
    &= \frac{4 \log a}{(2 x - 1)^3}\exp\left(\log a \left(1 - \frac{1}{(2x - 1)^2}\right)\right) \\
    &= \frac{4 a}{\sqrt{\log a}} \times \frac{\log^{3/2} a}{(2 x - 1)^3}\exp\left(- \frac{\log a}{(2x - 1)^2}\right).
  \end{align*}
  Taking $t = \frac{\log^{1/2} a}{(2x - 1)} > 0$ and using $t^3 e^{-t^2} \leq \frac{1}{2},$ we get 
  \begin{align*}
    \Psi'_a(x) &\leq \frac{4 a}{\sqrt{\log a}} \times \frac{1}{2} \leq \frac{2 e}{\sqrt{\log a}}
  \end{align*}
  since $a \leq e.$ 

  Clearly, $\Psi_a(x) \geq 0$ for all $x \in \R,$ and $\Psi_a$ is non-decreasing. Moreover it is strongly monotonic for all $x > \frac{1}{2}.$ Thus $\Psi_a(x) < \lim\limits_{x \to \infty} \Psi_a(x) = a$ for all $x \in \R.$

  The twice differentiability at $x = \frac{1}{2}$ follows from $\lim\limits_{\Delta \to 0} \frac{\Psi'_a(\frac{1}{2} + \Delta)}{\Delta} = 0$ for all $a > 1.$ For all $x \leq \frac{1}{2},$ $\Psi''_a(x) = 0.$ For all $x > \frac{1}{2},$ taking the second derivative and using simple algebra, we get
  \begin{align*}
    \abs{\Psi''_a(x)} 
    &= \abs{-\frac{8 \log a \times (3 (2 x - 1)^2 - 2 \log a)}{(2 x - 1)^6}\exp\left(\log a \left(1 - \frac{1}{(2x - 1)^2}\right)\right)} \\
    &= \abs{\frac{8 a \log a \times (3 (2 x - 1)^2 - 2 \log a)}{(2 x - 1)^6}\exp\left(- \frac{\log a}{(2x - 1)^2}\right)} \\
    &\leq \abs{\frac{24 a \log a}{(2 x - 1)^4}\exp\left(- \frac{\log a}{(2x - 1)^2}\right)} + \abs{\frac{16 a \log^2 a}{(2 x - 1)^6}\exp\left(- \frac{\log a}{(2x - 1)^2}\right)} \\
    &= \frac{24 a}{\log a} \times \frac{\log^2 a}{(2 x - 1)^4}\exp\left(- \frac{\log a}{(2x - 1)^2}\right) + \frac{16 a}{\log a} \times \frac{\log^3 a}{(2 x - 1)^6}\exp\left(- \frac{\log a}{(2x - 1)^2}\right).
  \end{align*}
  Taking $t = \frac{\log a}{(2x - 1)^2} > 0$ and using $t^2 e^{-t} \leq 1$ and $t^3 e^{-t} \leq 2,$
  \begin{align*}
    \abs{\Psi''_a(x)} \leq \frac{24 a}{\log a} \times 1 + \frac{16 a}{\log a} \times 2 \leq \frac{56 e}{\log a}
  \end{align*}
  since $a \leq e.$
\end{proof}

\LEMMATHREE*

\begin{proof}
  Let $j = \textnormal{prog}^K(x)$ and $p = \textnormal{prog}^1(x),$ then
  \begin{align*}
    F_{T,K,a}(x) 
    &= -\sum_{i=1}^{j + 1} \Psi_a(x_{i-K}) \dots \Psi_a(x_{i-2}) \Psi_a(x_{i-1})\Phi(x_i) \\
    &\quad -\sum_{i=j+2}^{T} \Psi_a(x_{i-K}) \dots \Psi_a(x_{i-2}) \Psi_a(x_{i-1})\Phi(x_i) \\
    &\quad + \sum_{i=1}^{p} \Gamma(x_i) + \sum_{i=p+1}^{T} \Gamma(x_i).
  \end{align*}
  Since $j = \textnormal{prog}^K(x),$ for all $i \geq j + 2,$ at least one of the values $x_{i-K}, \dots, x_{i-2}, x_{i-1}$ is zero. Noting that $\Psi_a(0) = \Psi_a'(0) = 0,$ the gradient of the second sum is zero. The first sum depends only on the first $j + 1$ coordinates; thus, the gradient of the first sum is non-zero in at most the $(j + 1)$\textsuperscript{th} coordinate.

  Since $p = \textnormal{prog}^1(x),$ the gradient of the last sum is zero because $\Gamma'(0) = 0.$ 
  Moreover, if $x_i = 0,$ then $\Gamma'(x_i) = 0;$ thus, $\nabla \left(\sum_{i=1}^{p} \Gamma(x_i)\right) \in \textnormal{supp}(x).$
\end{proof}

\LEMMAFOUR*

\begin{proof}
  For all $j \in [T],$ the partial derivative of $F_{T,K,a}$ with respect to $x_j$ is 
  \begin{equation}
  \label{eq:grad}
  \begin{aligned}
    \frac{\partial F_{T,K,a}}{\partial x_j}(x) = 
    \Big[
      & - \Psi_a(x_{j-K})  \dots \Psi_a(x_{j-1})\Phi'(x_j) \\
      &- \Psi_a(x_{j-K+1})  \dots  \Psi_a(x_{j-1}) \Psi'_a(x_{j})\Phi(x_{j+1}) \\
      &-\dots\\
      &- \Psi'_a(x_{j}) \Psi_a(x_{j+1})  \dots  \Psi_a(x_{\min\{j + K, T\} - 1}) \Phi(x_{\min\{j + K, T\}})\Big] + \Gamma'(x_j).
  \end{aligned}
  \end{equation}
  We now take the smallest $j \in [T]$ for which $x_j < 1$ and ${x_{j - 1}} \geq 1, \dots, {x_{j - K}} \geq 1.$ 
  
  If such $j$ does not exists, then ${x_1} \geq 1$ due to ${x_{0}} = \dots = {x_{-K + 1}} \equiv 1.$ Then ${x_2} \geq 1,$ and so on. Meaning that ${x_j} \geq 1$ for all $j \in [T],$ which contradicts the assumption of the theorem that $\textnormal{prog}^K(x) < T.$

  Fixing such $j,$ consider \eqref{eq:grad}.
  There are two cases. \\
  \emph{Case 1: $x_j > -1.$} Note that $\Psi, \Phi, \Psi', \Phi' \geq 0$ are non-negative and $\Gamma' \leq 0$ is non-positive. Thus 
  \begin{align*}
    \frac{\partial F_{T,K,a}}{\partial x_j}(x) \leq - \Psi_a(x_{j-K})  \dots  \Psi_a(x_{j-2}) \Psi_a(x_{j-1})\Phi'(x_j).
  \end{align*}
  Since ${x_{j - 1}} \geq 1, \dots, {x_{j - K}} \geq 1$ and $1 > x_j > -1$ (see Lemmas~\ref{lemma:phi} and \ref{lemma:psi}), we get
  \begin{align*}
    \frac{\partial F_{T,K,a}}{\partial x_j}(x) < -1.
  \end{align*}
  \emph{Case 2: $x_j \leq -1.$} Note that $\Psi, \Phi, \Psi', \Phi' \geq 0$ are non-negative. Thus
  \begin{align*}
    \frac{\partial F_{T,K,a}}{\partial x_j}(x) \leq \Gamma'(x_j).
  \end{align*}
  Since $x_j \leq -1$ (see Lemma~\ref{lemma:gamma}), we get
  \begin{align*}
    \frac{\partial F_{T,K,a}}{\partial x_j}(x) < -1.
  \end{align*}
  Finally, we can conclude that
  \begin{align*}
    \norm{\nabla F_{T,K,a}(x)} \geq \abs{\frac{\partial F_{T,K,a}}{\partial x_j}(x)} > 1.
  \end{align*}
\end{proof}

\LEMMAFIVE*

\begin{proof}
  Since $\Gamma(0) = 0$ and $\Psi_a, \Phi \geq 0,$ we get $F_{T,K,a}(0) \leq 0.$
  Next, due to $\Gamma(x) \geq 0,$ $0 \leq \Phi(x) \leq \sqrt{2 \pi e}$ and $0 \leq \Psi_a(x) \leq a$ for all $x \in \R^d,$
  \begin{align*}
    F_{T,K,a}(x) \geq -\sum_{i=1}^T \Psi_a(x_{i-K}) \dots \Psi_a(x_{i-2}) \Psi_a(x_{i-1})\Phi(x_i) \geq - T \sqrt{2 \pi e} \cdot a^K
  \end{align*}
  for all $x \in \R^T.$
\end{proof}

\LEMMASIX*

\begin{proof}
  Using \eqref{eq:grad},
  \begin{equation}
  \begin{aligned}
    \abs{\frac{\partial F_{T,K,a}}{\partial x_j}(x)} \leq 
    \Big|
      & \Psi_a(x_{j-K})  \dots \Psi_a(x_{j-1})\Phi'(x_j) \\
      & + \Psi_a(x_{j-K+1})  \dots  \Psi_a(x_{j-1}) \Psi'_a(x_{j})\Phi(x_{j+1}) \\
      &+ \dots\\
      &+ \Psi'_a(x_{j}) \Psi_a(x_{j+1})  \dots  \Psi_a(x_{\min\{j + K, T\} - 1}) \Phi(x_{\min\{j + K, T\}})\Big| + \abs{\Gamma'(x_j)}.
  \end{aligned}
  \end{equation}
  Thus, 
  \begin{equation}
  \begin{aligned}
    \abs{\frac{\partial F_{T,K,a}}{\partial x_j}(x)} \leq a^K \sqrt{e} + K a^{K-1} \sqrt{2 \pi e} \frac{2 e}{\sqrt{\log a}} + e \leq 6 \sqrt{2 \pi} e^{3/2} \frac{K a^K}{\sqrt{\log a}}
  \end{aligned}
  \end{equation}
  due to Lemmas~\ref{lemma:phi}, \ref{lemma:gamma}, and \ref{lemma:psi}.
\end{proof}

\LEMMASEVEN*

\begin{proof}
  Taking the second partial derivative in \eqref{eq:grad},
  \begin{equation}
  \begin{aligned}
      \frac{\partial^2 F_{T,K,a}}{\partial x_j^2}(x) = 
      \Big[
        & - \Psi_a(x_{j-K})  \dots  \Psi_a(x_{j-1}) \Phi''(x_j) \\
        &- \Psi_a(x_{j-K+1})  \dots  \Psi_a(x_{j-1}) \Psi''_a(x_{j})\Phi(x_{j+1}) \\
        &-\dots\\
        &- \Psi''_a(x_{j}) \Psi_a(x_{j+1})  \dots \Psi_a(x_{\min\{j + K, T\} - 1})\Phi(x_{\min\{j + K, T\}})\Big] + \Gamma''(x_j).
    \end{aligned}
    \end{equation}
Due to Lemmas~\ref{lemma:phi}, \ref{lemma:gamma}, and \ref{lemma:psi},
\begin{align}
  \abs{\frac{\partial^2 F_{T,K,a}}{\partial x_j^2}(x)} \leq \left[27 a^K + K \times \frac{56 \sqrt{2 \pi} e^{3/2} a^{K-1}}{\log a}\right] + 27 e^{-2} 
  \leq 168 \sqrt{2 \pi} e^{3/2} \cdot \frac{K a^{K}}{\log a}
  \label{eq:flsmMD}
\end{align}
Clearly, for all $\min\{j + K, T\} < i \leq T,$ 
\begin{align}
  \label{eq:kFpjo}
  \frac{\partial^2 F_{T,K,a}}{\partial x_j \partial x_{i}}(x) = 0
\end{align}
due to the construction of $F_{T,K,a}.$ 
Next, for all $j < i \leq \min\{j + K, T\},$
\begin{equation*}
\begin{aligned}
  \frac{\partial^2 F_{T,K,a}}{\partial x_j \partial x_i}(x) = 
  \Big[
    &- \Psi_a(x_{i-K})  \dots \Psi_a(x_{j-1}) \Psi'_a(x_{j}) \Psi_a(x_{j+1}) \dots \Psi_a(x_{i-1}) \Phi'(x_{i}) \\
    &- \Psi_a(x_{i-K+1})  \dots  \Psi_a(x_{j - 1}) \Psi'_a(x_{j}) \Psi_a(x_{j + 1}) \dots \Psi_a(x_{i - 1}) \Psi'_a(x_{i}) \Phi(x_{i+1}) \\
    &-\dots\\
    &- \Psi'_a(x_{j}) \Psi_a(x_{j + 1}) \dots \Psi_a(x_{i - 1}) \Psi'_a(x_{i}) \Psi_a(x_{i + 1}) \dots \Psi_a(x_{\min\{j + K, T\} - 1})\Phi(x_{\min\{j + K, T\}})\Big],
\end{aligned}
\end{equation*}
and 
\begin{align}
  \label{eq:DJnuJLUfxG}
  \abs{\frac{\partial^2 F_{T,K,a}}{\partial x_j \partial x_i}(x)} \leq \frac{2 e^{3/2} a^{K-1}}{\sqrt{\log a}} + (K - 1) \times \frac{4 \sqrt{2 \pi} e^{5/2} a^{K-2}}{\log a} \leq 4 \sqrt{2 \pi} e^{5/2} \cdot \frac{K a^{K}}{\log a}
\end{align}
for all $i \neq j \in [T]$ due to Lemmas~\ref{lemma:phi} and \ref{lemma:psi} and $e \geq a > 1$


Notice that $\nabla^2 F_{T,K,a}$ is $(2 K + 1)$--diagonal Hessian. Repeating a textbook analysis for completeness and denoting temporary $\mH \eqdef \nabla^2 F_{T,K,a}$, we will show that 
\begin{align}
  \label{eq:wFKcpfOWeaU}
  \norm{\nabla^2 F_{T,K,a} (x)} \leq (2 K + 1) \max_{i,j \in [T]} \abs{\frac{\partial^2 F_{T,K,a}}{\partial x_j \partial x_i}(x)}.
\end{align}
for all $x \in \R^T.$ Indeed, for all $x \in \R^T$ such that $\norm{x} \leq 1,$ 
\begin{align*}
  \abs{x^{\top} \mH x} = \abs{\sum_{i=1}^{T} x_i \sum_{j=1}^{T} x_j \mH_{ij}} = \abs{\sum_{i=1}^{T} x_i \sum_{j=\max\{i - K, 1\}}^{\min\{i + K, T\}} x_j \mH_{ij}} \leq \max_{i,j \in [T]} \abs{\mH_{ij}} \left(\sum_{i=1}^{T} \abs{x_i} \sum_{j=\max\{i - K, 1\}}^{\min\{i + K, T\}} \abs{x_j}\right),
\end{align*}
where the second equality due to $\mH$ is $(2 K + 1)$--diagonal. Using the Cauchy–Schwarz inequality,
\begin{align*}
  \abs{x^{\top} \mH x} \leq \max_{i,j \in [T]} \abs{\mH_{ij}} \sqrt{\sum_{i=1}^{T} x_i^2 \sum_{i=1}^{T} \left(\sum_{j=\max\{i - K, 1\}}^{\min\{i + K, T\}} \abs{x_j}\right)^2} \leq \max_{i,j \in [T]} \abs{\mH_{ij}} \sqrt{\sum_{i=1}^{T} \left(\sum_{j=\max\{i - K, 1\}}^{\min\{i + K, T\}} \abs{x_j}\right)^2}
\end{align*}
since $\norm{x} \leq 1.$ Next, using Jensen's inequality and $\norm{x} \leq 1,$
\begin{align*}
  \abs{x^{\top} \mH x} 
  &\leq \max_{i,j \in [T]} \abs{\mH_{ij}} \sqrt{(2 K + 1)\sum_{i=1}^{T} \sum_{j=\max\{i - K, 1\}}^{\min\{i + K, T\}} x_j^2} \leq \max_{i,j \in [T]} \abs{\mH_{ij}} \sqrt{(2 K + 1)^2 \sum_{i=1}^{T} x_i^2} \\
  &\leq (2 K + 1) \max_{i,j \in [T]} \abs{\mH_{ij}}.
\end{align*}
We have proved \eqref{eq:wFKcpfOWeaU}. It is left to combine \eqref{eq:wFKcpfOWeaU}, \eqref{eq:DJnuJLUfxG}, and \eqref{eq:flsmMD}.



\end{proof}

\section{Proof of Theorem~\ref{thm:main}}
\label{sec:main_proof}

\MAINTHEOREM*


\begin{proof}
  \textbf{(Step 1: Construction).} Using the construction from Section~\ref{sec:const}, we define a scaled version of it.
  Let us take any $\lambda > 0,$ $d, T \in \N,$ $d \geq T,$ and take the function $f\,:\,\R^d \to \R$ such that
  $$f(x) \eqdef \frac{L \lambda^2}{\ell_1(K, a)} F_{T,K,a}\left(\frac{x_{[T]}}{\lambda}\right),$$ where $\ell_1(K, a)$ is defined in Lemma~\ref{lemma:smooth} and 
  $x_{[T]} \in \R^T$ is the vector with the first $T$ coordinates of $x \in \R^d.$ Notice that the last $d - T$ coordinates are artificial.

  First, we have to show that $f$ is $L$-smooth and $f(0) - \inf_{x \in \R^d} f(x) \leq \Delta,$ Using Lemma~\ref{lemma:smooth},
  \begin{align*}
      \norm{\nabla f(x) - \nabla f(y)} 
      &= \frac{L \lambda}{\ell_1(K, a)} \norm{\nabla F_{T,K,a}\left(\frac{x_{[T]}}{\lambda}\right) - \nabla F_{T,K,a}\left(\frac{y_{[T]}}{\lambda}\right)} \leq L \lambda \norm{\frac{x_{[T]}}{\lambda} - \frac{y_{[T]}}{\lambda}} \\
      &= L \norm{x_{[T]} - y_{[T]}} \leq L \norm{x - y} \quad \forall x, y \in \R^d.
  \end{align*}
  Let us take $$T = \left\lfloor\frac{\Delta \cdot \ell_1(K, a)}{L \lambda^2 \cdot \Delta^0(K, a)}\right\rfloor.$$ Due to Lemma~\ref{lemma:delta},
  \begin{align*}
      f(0) - \inf_{x \in \R^d} f(x) = \frac{L \lambda^2}{\ell_1(K, a)} (F_{T,K,a}\left(0\right) - \inf_{x \in \R^T} F_{T,K,a}(x)) \leq \frac{L \lambda^2 \Delta^0(K, a) T}{\ell_1(K, a)} \leq \Delta,
  \end{align*}
  where $\Delta^0(K, a)$ is defined in Lemma~\ref{lemma:delta}.
  We also choose
  \begin{align}
  \label{eq:lambda}
  \lambda = \frac{\sqrt{2 \varepsilon} \ell_1(K, a)}{L}
  \end{align}
  to ensure that
  \begin{align}
    \label{eq:ElKnuvDAUgPNjjaLAn}
    \norm{\nabla f(x)}^2 = \frac{L^2 \lambda^2}{\ell_1^2(K, a)}\norm{\nabla F_{T,K,a} \left(\frac{x_{[T]}}{\lambda}\right)}^2 = 2 \varepsilon \norm{\nabla F_{T,K,a} \left(\frac{x_{[T]}}{\lambda}\right)}^2 > 2 \varepsilon \cdot \mathbbm{1}\left[\textnormal{prog}^K(x_{[T]}) < T\right],
  \end{align} 
  where the last inequality due to Lemma~\ref{lemma:worst_function_general}. Note that
  \begin{align}
    \label{eq:WzHRTJlPowGgA}
    T = \left\lfloor\frac{L \Delta}{2 \Delta^0(K, a) \cdot \ell_1(K, a) \cdot \varepsilon}\right\rfloor.
  \end{align}\\
  \textbf{(Step 2: Stochastic Oracle).} \\
  We take the stochastic oracle construction form \citep{arjevani2022lower}. For all $j \in [d],$
  \begin{align}
    \label{eq:stoch_constr}
    [\nabla f(x; \xi)]_j \eqdef \nabla_j f(x) \left(1 + \mathbbm{1}\left[j = \textnormal{prog}^K(x) + 1\right]\left(\frac{\xi}{p_{\sigma}} - 1\right)\right) \quad \forall x \in \R^d,
  \end{align} and $\mathcal{D}_{\xi} = \textnormal{Bernouilli}(p_{\sigma}),$ where $p_{\sigma} \in (0, 1].$ 
  We denote $[x]_j$ as the $j$\textsuperscript{th} index of a vector $x \in \R^d.$
  It is left to show this mapping is unbiased and $\sigma^2$-variance-bounded. Indeed, $$\Exp{[\nabla f(x, \xi)]_i} = \nabla_i f(x) \left(1 + \mathbbm{1}\left[i = \textnormal{prog}^K(x) + 1\right]\left(\frac{\Exp{\xi}}{p_{\sigma}} - 1\right)\right) = \nabla_i f(x)$$ for all $i \in [d],$ and
  \begin{align*}
      \Exp{\norm{\nabla f(x; \xi) - \nabla f(x)}^2} \leq \max_{j \in [d]} \left|\nabla_j f(x)\right|^2\Exp{\left(\frac{\xi}{p_{\sigma}} - 1\right)^2}
  \end{align*}
  because the difference is non-zero only in one coordinate. Thus
  \begin{align*}
      \Exp{\norm{\nabla f(x, \xi) - \nabla f(x)}^2} &\leq \frac{\norm{\nabla f(x)}_{\infty}^2(1 - p_{\sigma})}{p_{\sigma}} = \frac{L^2 \lambda^2 \norm{F_{T,K,a}\left(\frac{x_{[T]}}{\lambda}\right)}_{\infty}^2(1 - p_{\sigma})}{\ell^2_1(K, a) p_{\sigma}} \\
      &\leq \frac{L^2 \lambda^2 \gamma_{\infty}^2(K, a) (1 - p_{\sigma})}{\ell^2_1(K, a) p_{\sigma}},
  \end{align*}
  where we use Lemma~\ref{lemma:bound_grad} in the last inequality. Taking
  \begin{align}
  p_{\sigma} = \min\left\{\frac{L^2 \lambda^2 \gamma_{\infty}^2(K, a)}{\sigma^2 \ell_1^2(K, a)}, 1\right\} \overset{\eqref{eq:lambda}}{=} \min\left\{\frac{2 \varepsilon \gamma_{\infty}^2(K, a)}{\sigma^2}, 1\right\},
  \label{eq:JvLFTLPbL}
  \end{align}
  we ensure that $\Exp{\norm{\nabla f(x, \xi) - \nabla f(x)}^2} \leq \sigma^2.$

  \textbf{(Step 3: Reduction to the Analysis of Concentration).} 
  At the beginning, due to Definition~\ref{def:zero}, $s^0_i = 0$ for all $i \in [n].$ Thus, if $k = 0,$ all workers can receive zero vectors from the server. Thus, at the beginning, all workers can only calculate stochastic gradients at the point zero.

  Let $t^{1,i}$ denote the earliest time at which all of the first $K$ coordinates become non-zero in the local information available to worker~$i$. In other words, $t^{1,i}$ is the first time when worker~$i$ has \emph{discovered} all of the first $K$ coordinates. Consequently, prior to time 
  \begin{align}
    \label{eq:time_1}
    t_{1} \eqdef \min_{i \in [n]} t_{1,i}
  \end{align}
  neither the server nor any worker is able to discover (filled with non-zero values) the $(K + 1)$\textsuperscript{th} and subsequent coordinates due to Lemma~\ref{lemma:prog}.

  \textbf{There are two options by which a worker may discover a new non-zero coordinate: through local stochastic computations or through communication from the server.}\\~\\
  \textbf{Option 1:} In the first option, a worker computes a stochastic gradient, which takes $h$ seconds. However, due to the construction of stochastic gradients, even if the computation is completed, the worker will not make progress or discover a new non-zero coordinate, as it will be zeroed out with probability $p_{\sigma}$. Due to Lemma~\ref{lemma:prog}, each worker can discover at most one coordinate at position $\textnormal{prog}^K(x) + 1$ before time $t_{1}$ in the first $K$ coordinates, where $x$ is a query point.
  \begin{remark}
    \label{remark:multiple}
    For this reason, making multiple queries with the same random variable instead of a single query does not help the algorithm progress: if the coordinates are zeroed out, then they are zeroed out in all vectors.
  \end{remark}

  Let $\eta_{1,i,1}$ be the number of stochastic gradients computations\footnote{It is possible that $\Prob{\eta_{1,i,1} = \infty} > 0$ if, for instance, the algorithm decides to stop calculating stochastic gradients. And even $\Prob{\eta_{1,i,1} = \infty} = 1$ if it does not calculate at all.} until the first moment when a coordinate is not zeroed out in \eqref{eq:stoch_constr} in worker $i.$ Assume that $\xi_1, \xi_2, \dots$ is a stream of i.i.d. random Bernoulli variables from \eqref{eq:stoch_constr} in  worker $i$ (all workers have different streams), then
  \begin{align*}
    \Prob{\eta_{1,i,1} \leq t} 
    &\leq \sum_{k=1}^{\flr{t}} \Prob{\xi_k = 1, \xi_{k-1} = 0, \dots, \xi_{1} = 0} = \sum_{k=1}^{\flr{t}} p_{\sigma} (1 - p_{\sigma})^{j - 1} \leq t p_{\sigma}.
  \end{align*}
  for all $t \geq 0.$ Similarly, let $\eta_{1,i,k}$ denote the number of stochastic gradient computations until the first moment when a coordinate is not zeroed out in \eqref{eq:stoch_constr}, after the moment when the $(k - 1)$\textsuperscript{th} coordinate is no longer zeroed out in worker $i.$ In other words, worker $i$ should calculate $\eta_{1,i,1}$ stochastic gradients to discover the first coordinate, calculate $\eta_{1,i,2}$ stochastic gradients to discover the second coordinate, and so on. Since the draws of $\xi$ in \eqref{eq:stoch_constr} are i.i.d., we can conclude that
  \begin{align*}
    \ProbCond{\eta_{1,i,k} \leq t}{\eta_{1,i,k-1}, \dots, \eta_{1,i,1}} \leq t p_{\sigma}
  \end{align*}
  for all $k \geq 1$ and $t \geq 0.$
  \\~\\
  \textbf{Option 2:} In the second option, worker $i$ receives $P \in \{0, \dots, d\}$ random coordinates with the set of indices $\{\nu_{1,1}, \dots, \nu_{1,P}\}$ \emph{without replacement}, where it takes $\tau_{\textnormal{s}}$ seconds to receive one coordinate. Then, the worker receives $\bar{P} \in \{0, \dots, d\}$ random coordinates with the set of indices $\{\nu_{2,1}, \dots, \nu_{2,\bar{P}}\}$ \emph{without replacement}, and so on (all workers get different sets; we drop the indices of the workers in the notations).
  
  Consequently, the worker receives a stream of coordinate indices
  $(\nu_1, \nu_2, \dots)$ 
  where we concatenated the sets of indices, preserving the exact order in which the server sampled them. Note that all workers have different streams, and we now focus on one worker.

  Notice that 
  \begin{align*}
    \Prob{\nu_1 \in [K]} = \frac{K}{d}
  \end{align*}
  since $\nu_1$ is uniformly random coordinate from the set $[d].$ Next, 
  \begin{align*}
    \ProbCond{\nu_2 \in [K]}{\nu_1} \leq \frac{K}{d - 1},
  \end{align*}
  because either $\nu_1 \in [K]$, in which case the probability is $\frac{K - 1}{d - 1}$, or $\nu_1 \not\in [K]$, in which case the probability is $\frac{K}{d - 1}$. Using the same reasoning, 
  \begin{align*}
    \ProbCond{\nu_i \in [K]}{\nu_1, \dots, \nu_{i - 1}} \leq \frac{K}{d - i + 1}.
  \end{align*}
  for all $i \leq d.$ 
  
  Hence, the worker receives a stream of coordinates $\nu_1, \nu_2, \dots$ such that $\ProbCond{\nu_i \in [K]}{\nu_1, \dots, \nu_{i - 1}} \leq \frac{K}{d - i + 1}$ for all $i \leq d.$  
  Let $\mu_{1,i,1}$ be the number of received coordinates until the moment when the last received coordinate belongs to $[K]$ in worker $i.$
  Similarly, let $\mu_{1,i,k}$ be the number of received coordinates until the moment when the last received coordinate belongs to $[K]$, after the $(k - 1)$\textsuperscript{th} time this has happened in worker $i.$ In other words, worker $i$ should receive $\mu_{1,i,1}$ coordinates to obtain a coordinate that belongs to $[K]$. To get the next coordinate that belongs to $[K]$, the worker should receive $\mu_{1,i,2}$ coordinates, and so on. 
  Then,
  \begin{align*}
    \Prob{\mu_{1,i,1} = j} 
    &= \Prob{\nu_j \in [K], \nu_{j-1} \not\in [K], \dots, \nu_{1} \not\in [K]} \\
    &= \ProbCond{\nu_j \in [K]}{\nu_{j-1} \not\in [K], \dots, \nu_{1} \not\in [K]} \Prob{\nu_{j-1} \not\in [K], \dots, \nu_{1} \not\in [K]} \leq \frac{K}{d - j + 1}
  \end{align*}
  for all $1 \leq j \leq d,$ and 
  \begin{align}
    \label{eq:TXdhtULbbzaLGikrg}
    \Prob{\mu_{1,i,1} \leq t} = \sum_{j = 1}^{\flr{t}} \Prob{\mu_{1,i,1} = j} \leq \frac{K t}{d - t + 1}.
  \end{align}
  for all $0 \leq t \leq d.$
  Similarly,
  \begin{align*}
    &\ProbCond{\mu_{1,i,k} = j}{\mu_{1,i,k - 1}, \dots, \mu_{1,i,1}} \\
    &=\ProbCond{\nu_{u + j} \in [K], \nu_{u + j-1} \not\in [K], \dots, \nu_{u + 1} \not\in [K]}{\nu_{u} \in [K], \dots, \nu_{1} \not\in [K]} \\
    &\leq \frac{K}{\max\{d - u, 0\} - j + 1},
  \end{align*}
  where $u = \sum_{j = 1}^{k - 1}\mu_{1,i,j},$ for all $j \leq \max\{d - u, 0\}.$ Thus,
  \begin{align*}
    \ProbCond{\mu_{1,i,k} \leq t}{\mu_{1,i,k - 1}, \dots, \mu_{1,i,1}} \leq \frac{K t}{\max\{d - \sum_{j = 1}^{k - 1}\mu_{1,i,j}, 0\} - t + 1},
  \end{align*}
  for all $0 \leq t \leq \max\{d - \sum_{j = 1}^{k - 1}\mu_{1,i,j}, 0\}.$

  Recall that the workers can discover new non-zero coordinates only through the stochastic processes discussed above. To discover all of the first $K$ coordinates, either the first or the second process must uncover at least $\frac{K}{2}$ coordinates\footnote{At the end of the proof, we take $K \textnormal{ mod } 2 = 0.$}. If worker $i$ has discovered fewer than $\frac{K}{2}$ coordinates through stochastic gradient computations and fewer than $\frac{K}{2}$ coordinates through receiving coordinates from the server, then it will not be able to cover all $K$ coordinates. Hence,
  \begin{align}
    \label{eq:ieNPCMXDB}
    t_1 \geq \min_{i \in [n]} \left\{\min\left\{h \sum_{k=1}^{\frac{K}{2}} \eta_{1,i,k}, \tau_{\textnormal{s}} \sum_{k=1}^{\frac{K}{2}} \mu_{1,i,k}\right\}\right\},
  \end{align}
  where $t_1$ is defined in \eqref{eq:time_1}. This is because $h \sum_{k=1}^{\frac{K}{2}} \eta_{1,i,k}$ is the time required to obtain $\frac{K}{2}$ ``lucky'' stochastic gradients, those for which the coordinates are not zeroed out, and $\tau_{\textnormal{s}} \sum_{k=1}^{\frac{K}{2}} \mu_{1,i,k}$ is the time required to receive $\frac{K}{2}$ ``lucky'' coordinates that belong to $[K]$.

  \begin{remark}
    \label{remark:pos}
    The previous derivations hold for all $\tau_{\textnormal{w}} > 0.$ If we start taking the communication time $\tau_{\textnormal{w}}$ into account, then the bound on $t_1$ in \eqref{eq:ieNPCMXDB} may only increase. For all $\tau_{\textnormal{w}} > 0,$ worker $i$ still has to discover new non-zero coordinates either through stochastic gradient computations or by receiving coordinates from the server and it will take at least $$\min_{i \in [n]} \left\{\min\left\{h \sum_{k=1}^{\frac{K}{2}} \eta_{1,i,k}, \tau_{\textnormal{s}} \sum_{k=1}^{\frac{K}{2}} \mu_{1,i,k}\right\}\right\}$$ seconds to discover all of the first $K$ coordinates.
  \end{remark}

  Once the workers have discovered the first $K$ coordinates, the discovery process repeats for the set $\{K + 1, \dots, 2K\}$, which similarly requires at least
  \begin{align*}
    \min_{i \in [n]} \left\{\min\left\{h \sum_{k=1}^{\frac{K}{2}} \eta_{2,i,k}, \tau_{\textnormal{s}} \sum_{k=1}^{\frac{K}{2}} \mu_{2,i,k}\right\}\right\}
  \end{align*}
  seconds, where $\{\eta_{b,i,k}\}$ and $\{\mu_{b,i,k}\}$ are random variables such that
  \begin{align}
    \label{eq:kEAXDdRCaGsOmJWjx1}
    \ProbCond{\eta_{b,i,k} \leq t}{\eta_{b,i,k - 1}, \dots, \eta_{b,i,1}, \mathcal{G}_{b-1}} \leq t p_{\sigma}
  \end{align}
  for all $b \geq 1, k \geq 1,$ $i \in [n], t \geq 0,$ and
  \begin{align}
    \label{eq:kEAXDdRCaGsOmJWjx2}
    \ProbCond{\mu_{b,i,k} \leq t}{\mu_{b,i,k - 1}, \dots, \mu_{b,i,1}, \mathcal{G}_{b-1}} \leq \frac{K t}{\max\{d - \sum_{j=1}^{k-1} \mu_{b,i,j}, 0\} - t + 1}
  \end{align}
  for all $b \geq 1, k \geq 1,$ $i \in [n],$ and $t \leq \max\{d - \sum_{j=1}^{k-1} \mu_{b,i,j}, 0\},$ where $\mathcal{G}_{b-1}$ is the sigma-algebra generated by $\{\eta_{b',i,k}\}_{i \in [n], k \in [\frac{K}{2}], b' < b}$ and $\{\mu_{b',i,k}\}_{i \in [n], k \in [\frac{K}{2}], b' < b}$ and $u = \sum_{j=1}^{k-1} \mu_{b,i,j}.$ 
  
  More formally, $\eta_{2,i,k}$ can be defined as the number of stochastic gradient computations until the first moment when a coordinate is not zeroed out in \eqref{eq:stoch_constr}, after the moment when the $(k - 1)$\textsuperscript{th} coordinate is no longer zeroed out, when $\textnormal{prog}^K$ of the input points to the stochastic gradients is $\geq K,$ and $\mu_{2,i,k}$ be the number of received coordinates until the moment when the last received coordinate belongs to $\{K + 1, \dots, 2 K\}$, after the $(k - 1)$\textsuperscript{th} time this has happened, when $\textnormal{prog}^1$ of the input points to the compressor is $\geq K + 1,$ and so on.

  We define
  \begin{align}
    \label{eq:ybanDfAgA}
    p_{K} \eqdef \frac{2 K}{d}.
  \end{align}

  Finally, to discover the $T$\textsuperscript{th} coordinates it takes at least 
  \begin{align*}
    \sum_{b = 1}^{B} \min_{i \in [n]} \left\{\min\left\{h \sum_{k=1}^{\frac{K}{2}} \eta_{b,i,k}, \tau_{\textnormal{s}} \sum_{k=1}^{\frac{K}{2}} \mu_{b,i,k}\right\}\right\}
  \end{align*}
  seconds, where $B = \flr{\frac{T}{K}}.$ It it left to use the following lemma.
  \begin{restatable}{lemma}{MAINLEMMA}
    \label{lemma:main_concet_lemma}
    Let $\{\eta_{b,i,j}\}_{i,j,b \geq 0}$ and $\{\mu_{b,i,j}\}_{i,j,b \geq 0}$ be random variables such that
    \begin{align}
      \label{eq:kEAXDdRCaGsOmJWjx1new}
      \ProbCond{\eta_{b,i,k} \leq t}{\eta_{b,i,k - 1}, \dots, \eta_{b,i,1}, \mathcal{G}_{b-1}} \leq t p_{\sigma}
    \end{align}
    for all $b \geq 1, k \geq 1,$ $i \in [n], t \geq 0,$ and
    \begin{align}
      \label{eq:kEAXDdRCaGsOmJWjx2new}
      \ProbCond{\mu_{b,i,k} \leq t}{\mu_{b,i,k - 1}, \dots, \mu_{b,i,1}, \mathcal{G}_{b-1}} \leq \frac{K t}{\max\{d - \sum_{j=1}^{k-1} \mu_{b,i,j}, 0\} - t + 1},
    \end{align}
    for all $b \geq 1, k \geq 1,$ $i \in [n],$ $0 \leq t \leq \max\{d - \sum_{j=1}^{k-1} \mu_{b,i,j}, 0\},$ and $1 \leq K \leq d,$ where $\mathcal{G}_{b-1}$ is the sigma-algebra generated by $\{\eta_{b',i,k}\}_{i \in [n], k \in [\frac{K}{2}], b' < b}$ and $\{\mu_{b',i,k}\}_{i \in [n], k \in [\frac{K}{2}], b' < b}.$ Then
    \begin{align*}
      \Prob{\sum_{b = 1}^{B} \min_{i \in [n]} \left\{\min\left\{h \sum_{k=1}^{\frac{K}{2}} \eta_{b,i,k}, \tau_{\textnormal{s}} \sum_{k=1}^{\frac{K}{2}} \mu_{b,i,k}\right\}\right\} \leq \bar{t}} \leq \delta
    \end{align*}
    with 
    \begin{align}
      \label{eq:T}
      \bar{t} \eqdef \frac{B K + \log \delta}{e^{4} (2 n)^{2 / K} (4 + \frac{2}{K} \log (2n))} \min\left\{\frac{h}{p_{\sigma}}, \frac{\tau_{\textnormal{s}}}{p_{K}}\right\},
    \end{align}
    where 
    \begin{align*}
      p_{K} \eqdef \frac{2 K}{d}.
    \end{align*}
  \end{restatable}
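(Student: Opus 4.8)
We prove Lemma~\ref{lemma:main_concet_lemma} by a Chernoff (exponential Markov) argument on the lower tail, peeling off the $B$ rounds through the filtration $\{\mathcal{G}_b\}$. Write $S^{\eta}_{b,i}\eqdef\sum_{k=1}^{K/2}\eta_{b,i,k}$, $S^{\mu}_{b,i}\eqdef\sum_{k=1}^{K/2}\mu_{b,i,k}$, $M_b\eqdef\min_{i\in[n]}\min\{h S^{\eta}_{b,i},\tau_{\textnormal{s}}S^{\mu}_{b,i}\}$, and $t_B\eqdef\sum_{b=1}^{B}M_b$. For any $\theta>0$, Markov's inequality applied to $e^{-\theta t_B}$ gives $\Prob{t_B\le\bar t}\le e^{\theta\bar t}\,\Exp{\prod_{b=1}^{B}e^{-\theta M_b}}$. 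Since $\prod_{b<b_0}e^{-\theta M_b}$ is $\mathcal{G}_{b_0-1}$-measurable, the plan is to show that $\ExpCond{e^{-\theta M_{b_0}}}{\mathcal{G}_{b_0-1}}\le e^{-2K}$ \emph{deterministically} for every $b_0$; iterating the tower rule then yields $\Exp{\prod_{b=1}^{B}e^{-\theta M_b}}\le e^{-2BK}$.

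To bound one round, first use $e^{-\theta M_b}=\max_{i}\max\{e^{-\theta h S^{\eta}_{b,i}},e^{-\theta\tau_{\textnormal{s}}S^{\mu}_{b,i}}\}\le\sum_{i=1}^{n}\big(e^{-\theta h S^{\eta}_{b,i}}+e^{-\theta\tau_{\textnormal{s}}S^{\mu}_{b,i}}\big)$, a union bound over the $n$ workers and the two ``options.'' For a fixed pair $(b,i)$ I would control the two conditional MGFs by peeling off $k=K/2,\dots,1$. For the $\eta$ part this is immediate: from $\ProbCond{\eta_{b,i,k}\le t}{\eta_{b,i,k-1},\dots,\eta_{b,i,1},\mathcal{G}_{b-1}}\le\min\{1,t p_{\sigma}\}$ and the layer-cake identity $\Exp{e^{-s\eta}}=\int_{0}^{\infty}s e^{-st}\Prob{\eta\le t}\,dt$ (valid for $\eta\ge0$), one gets the deterministic bound $\ExpCond{e^{-s\eta_{b,i,k}}}{\eta_{b,i,k-1},\dots,\eta_{b,i,1},\mathcal{G}_{b-1}}\le\frac{p_{\sigma}}{s}(1-e^{-s/p_{\sigma}})\le\frac{p_{\sigma}}{s}$, hence $\ExpCond{e^{-\theta h S^{\eta}_{b,i}}}{\mathcal{G}_{b-1}}\le(p_{\sigma}/(\theta h))^{K/2}$ by iterating the conditioning.

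The $\mu$ part is the delicate one, because the denominator $\max\{d-\sum_{j<k}\mu_{b,i,j},0\}-t+1$ in \eqref{eq:kEAXDdRCaGsOmJWjx2new} deteriorates as the running total grows. I would handle this by a truncation/stopping device: set $\hat\mu_{b,i,k}\eqdef\mu_{b,i,k}$ while $\sum_{j<k}\mu_{b,i,j}<d/2$ and $\hat\mu_{b,i,k}\eqdef+\infty$ thereafter. Then $e^{-s\sum_{k}\mu_{b,i,k}}\le e^{-s\sum_{k}\hat\mu_{b,i,k}}+e^{-sd/2}$ for every realization (if the running total ever reaches $d/2$ then $S^{\mu}_{b,i}\ge d/2$, so the first term vanishes and the second dominates; otherwise the two sums coincide). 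While the running total is below $d/2$ the denominator exceeds $d/2$, so $\ProbCond{\hat\mu_{b,i,k}\le t}{\hat\mu_{b,i,k-1},\dots,\mathcal{G}_{b-1}}\le\min\{1,2p_{K}t\}$ for $t\le d/4$ (using $p_{K}=2K/d$), and the same layer-cake computation gives the deterministic bound $\ExpCond{e^{-s\hat\mu_{b,i,k}}}{\hat\mu_{b,i,k-1},\dots,\mathcal{G}_{b-1}}\le\frac{2p_{K}}{s}+e^{-sd/4}$ (valid also on $\{\hat\mu_{b,i,k}=\infty\}$, where the left side is $0$); iterating yields $\ExpCond{e^{-\theta\tau_{\textnormal{s}}S^{\mu}_{b,i}}}{\mathcal{G}_{b-1}}\le(\tfrac{2p_{K}}{\theta\tau_{\textnormal{s}}}+e^{-\theta\tau_{\textnormal{s}}d/4})^{K/2}+e^{-\theta\tau_{\textnormal{s}}d/2}$. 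The main technical obstacle I anticipate lies here: one must verify that the event ``running total $<d/2$'' is measurable with respect to $\sigma(\hat\mu_{b,i,1},\dots,\hat\mu_{b,i,k-1},\mathcal{G}_{b-1})$ (which it is, since $\hat\mu$ equals $\mu$ on that event and becomes $+\infty$ exactly once the threshold is crossed), so that these per-step bounds are genuinely deterministic and the peeling is legitimate.

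Collecting the three bounds, $\ExpCond{e^{-\theta M_b}}{\mathcal{G}_{b-1}}\le n\big[(p_{\sigma}/(\theta h))^{K/2}+(\tfrac{2p_{K}}{\theta\tau_{\textnormal{s}}}+e^{-\theta\tau_{\textnormal{s}}d/4})^{K/2}+e^{-\theta\tau_{\textnormal{s}}d/2}\big]$. The crucial step is the choice $\theta\eqdef e^{4}(2n)^{2/K}\big(4+\tfrac2K\log(2n)\big)\big/\min\{h/p_{\sigma},\tau_{\textnormal{s}}/p_{K}\}$, which forces $\theta h/p_{\sigma}$ and $\theta\tau_{\textnormal{s}}/p_{K}$ to both be at least $4e^{4}(2n)^{2/K}$; then each $(\cdot)^{K/2}$ factor is at most $(e^{4}(2n)^{2/K})^{-K/2}=e^{-2K}/(2n)$ — the exponent $K/2$ converts $(2n)^{2/K}$ into exactly $2n$, cancelling the union-bound factor $n$ (with a little slack) — while the residual $e^{-\theta\tau_{\textnormal{s}}d/4}$ and $e^{-\theta\tau_{\textnormal{s}}d/2}$ terms are exponentially negligible; a direct check of these constants gives $\ExpCond{e^{-\theta M_b}}{\mathcal{G}_{b-1}}\le e^{-2K}$. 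Therefore $\Prob{t_B\le\bar t}\le e^{\theta\bar t-2BK}$, and since $\theta\bar t=BK+\log\delta$ by the definition of $\bar t$, the right-hand side equals $\delta\,e^{-BK}\le\delta$, which is the claim (the estimate is just Markov's inequality, so it remains valid when $\bar t\le0$). Pinning down the constant $e^{4}(2n)^{2/K}(4+\tfrac2K\log(2n))$ precisely is then a matter of the bookkeeping in the last two steps.
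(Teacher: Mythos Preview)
Your proposal is correct and follows essentially the same route as the paper's proof: Chernoff's inequality on $e^{-\theta t_B}$, peeling the $B$ rounds via the tower property, the double union bound $\min_i\min\{\cdot,\cdot\}\to\sum_i(\cdot+\cdot)$, and the same choice of $\theta=\lambda$. The only differences are cosmetic. Where the paper bounds each single-variable MGF by the crude threshold split $\ExpCond{e^{-\lambda h\eta}}{\cdot}\le e^{-\lambda t}+\frac{t p_\sigma}{h}$ and then optimizes over the free parameter $t$, you use the layer-cake identity to get the cleaner $\ExpCond{e^{-s\eta}}{\cdot}\le p_\sigma/s$; both lead to the same per-round estimate. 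For the $\mu$ part, your $\hat\mu$ truncation device is just a repackaging of the paper's explicit recursion $I_2^{j}\le(\cdot)I_2^{j-1}+n e^{-\lambda\tau_{\textnormal{s}}d/2}$, which splits on whether the running total $u$ exceeds $d/2$; the measurability concern you raise dissolves if you peel by conditioning on the original $\mu_{b,i,1},\dots,\mu_{b,i,k-1}$ (which dominate $\sigma(\hat\mu_{b,i,1},\dots,\hat\mu_{b,i,k-1})$) rather than on the $\hat\mu$'s, and that is exactly what the paper does. Your per-round bound $\ExpCond{e^{-\theta M_b}}{\mathcal{G}_{b-1}}\le e^{-2K}$ is actually a bit sharper than the paper's $e^{-K}$ (strictly it comes out as $\le 2e^{-2K}$ after collecting the residual $e^{-\theta\tau_{\textnormal{s}}d/2}$ term, but the extra factor $e^{-BK}$ of slack you carry absorbs this). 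The final substitution $\theta\bar t=BK+\log\delta$ is identical.
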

  \textbf{(Step 4: Endgame).} 
  Thus, with probability at least $1 - \delta$, any zero-respecting algorithm requires at least $\bar{t}$ seconds to discover the $T$\textsuperscript{th} coordinate. Since $\textnormal{prog}^K(x) \leq \textnormal{prog}^1(x)$ for all $x \in \R^T$, and due to \eqref{eq:ElKnuvDAUgPNjjaLAn},
  \begin{align*}
  \inf_{y \in S_{t}} \norm{\nabla f(y)}^2 > 2 \varepsilon \inf_{y \in S_{t}} \mathbbm{1}\left[\textnormal{prog}^1(y_{[T]}) < T\right],
  \end{align*}
  where $S_{t}$ is the set of all possible candidate points to be an $\varepsilon$--stationary point up to time $t,$ which can be computed by $A.$ Taking $\delta = \frac{1}{2},$
  \begin{align*}
    \Exp{\inf_{y \in S_{t}} \norm{\nabla f(y)}^2} > 2 \varepsilon \Exp{\inf_{y \in S_{t}} \mathbbm{1}\left[\textnormal{prog}^1(y_{[T]}) < T\right]} \geq \varepsilon,
  \end{align*}
  for $t = \frac{1}{2} \bar{t}$ because $\textnormal{prog}^1(y_{[T]}) < T$ for all $y \in S_{t}$ with probability at least $\frac{1}{2}.$

  It is left to choose $K$ and $a,$ and substitute all quantities to $\bar{t}.$ Using $B = \flr{\frac{T}{K}},$
  \begin{align*}
    \bar{t} 
    = \frac{\flr{\frac{T}{K}} K - \log 8}{e^{4} (2 n)^{2 / K} (4 + \frac{2}{K} \log (2n))} \min\left\{\frac{h}{p_{\sigma}}, \frac{\tau_{\textnormal{s}}}{p_{K}}\right\} \\
    \geq \frac{T - K - \log 8}{e^{4} (2 n)^{2 / K} (4 + \frac{2}{K} \log (2n))} \min\left\{\frac{h}{p_{\sigma}}, \frac{\tau_{\textnormal{s}}}{p_{K}}\right\}.
  \end{align*}
  Due to \eqref{eq:WzHRTJlPowGgA}, \eqref{eq:JvLFTLPbL}, and \eqref{eq:ybanDfAgA},
  \begin{align*}
    \bar{t} 
    \geq \frac{\left\lfloor\frac{L \Delta}{2 \Delta^0(K, a) \cdot \ell_1(K, a) \cdot \varepsilon}\right\rfloor - K - \log 8}{e^{4} (2 n)^{2 / K} (4 + \frac{2}{K} \log (2n))} \min\left\{\max\left\{h, \frac{h \sigma^2}{2 \varepsilon \gamma_{\infty}^2(K, a)}\right\}, \frac{\tau_{\textnormal{s}} d}{2 K}\right\}.
  \end{align*}
  Using the definitions of $\Delta^0(K, a),$ $\gamma_{\infty}(K,a),$ and $\ell_1(K, a),$
  \begin{align*}
    \bar{t} 
    &\geq \left(e^{4} (2 n)^{2 / K} \left(4 + \frac{2}{K} \log (2n)\right)\right)^{-1}\left(\left\lfloor\frac{L \Delta \log a}{48 \pi e^{3} K^2 a^{2 K} \varepsilon}\right\rfloor - K - \log 8\right) \min\left\{\max\left\{h, \frac{h \sigma^2 \log a}{144 \pi e^{3} K^2 a^{2 K} \varepsilon}\right\}, \frac{\tau_{\textnormal{s}} d}{2 K}\right\}.
  \end{align*}
  We can take any $a$ from the interval $(1, e].$ We choose $a = 1 + \frac{1}{K},$ then $\log a = \log\left(1 + \frac{1}{K}\right) \geq \frac{1}{2 K}$ for all $K \geq 1,$ $a^{2 K} \leq e^2$ for all $K \geq 1,$ and 
  \begin{align*}
    \bar{t} 
    &\geq \left(e^{4} (2 n)^{2 / K} \left(4 + \frac{2}{K} \log (2n)\right)\right)^{-1}\left(\left\lfloor\frac{L \Delta}{96 \pi e^{5} K^3 \varepsilon}\right\rfloor - K - \log 8\right) \min\left\{\max\left\{h, \frac{h \sigma^2}{288 \pi e^{5} K^3 \varepsilon}\right\}, \frac{\tau_{\textnormal{s}} d}{2 K}\right\}.
  \end{align*}
  Taking $K = 2 \ceil{2 \log (2 n)},$ $(2 n)^{2 / K} \leq e,$ $K \leq 16 \log (n + 1)$ and
  \begin{align*}
    \bar{t} 
    &\geq \frac{1}{5 e^{5}} \left(\left\lfloor\frac{L \Delta}{96 \cdot 8^4 \pi e^{5} \log^3 (n + 1) \varepsilon}\right\rfloor - 32 \log (n + 1)\right) \min\left\{\max\left\{h, \frac{h \sigma^2}{288 \cdot 8^4 \pi e^{5} \log^3 (n + 1) \varepsilon}\right\}, \frac{\tau_{\textnormal{s}} d}{32 \log (n + 1)}\right\} \\
    &\geq \frac{1}{5 e^{5}} \left(\frac{L \Delta}{96 \cdot 8^4 \pi e^{5} \log^3 (n + 1) \varepsilon} - 36 \log (n + 1)\right) \min\left\{\max\left\{h, \frac{h \sigma^2}{288 \cdot 8^4 \pi e^{5} \log^3 (n + 1) \varepsilon}\right\}, \frac{\tau_{\textnormal{s}} d}{32 \log (n + 1)}\right\}.
  \end{align*}
  We assume $\frac{L \Delta}{\varepsilon} \geq \bar{c}_1 \log^4 (n + 1)$ for a universal constant $\bar{c}_1.$ Taking $\bar{c}_1$ large enough, one can see that
  \begin{align*}
    \bar{t} 
    &\geq \frac{1}{5 e^{5}} \left(\frac{L \Delta}{2 \cdot 96 \cdot 8^4 \pi e^{5} \log^3 (n + 1) \varepsilon}\right) \min\left\{\max\left\{h, \frac{h \sigma^2}{288 \cdot 8^4 \pi e^{5} \log^3 (n + 1) \varepsilon}\right\}, \frac{\tau_{\textnormal{s}} d}{32 \log (n + 1)}\right\}.
  \end{align*}
  For a small enough universal $\bar{c}_2,$ we get the inequality
  \begin{align*}
    \bar{t} 
    &\geq \bar{c}_2 \times \left(\frac{L \Delta}{\log^3 (n + 1) \varepsilon}\right) \min\left\{\max\left\{h, \frac{h \sigma^2}{\log^3 (n + 1) \varepsilon}\right\}, \frac{\tau_{\textnormal{s}} d}{\log (n + 1)}\right\},
  \end{align*}
  which finishes the proof. Notice that we can take
  \begin{align*}
    d \geq T = \left\lfloor\frac{L \Delta \log a}{48 \pi e^{3} K^2 a^{2K} \varepsilon}\right\rfloor = \Theta\left(\frac{L \Delta}{\log^3(n + 1) \varepsilon}\right).
  \end{align*}
\end{proof}

\subsection{Main Concentration Lemma}
\MAINLEMMA*

\begin{proof}
  Let us temporarily define $\beta_{b,i} \eqdef \min\left\{h \sum_{k=1}^{\frac{K}{2}} \eta_{b,i,k}, \tau_{\textnormal{s}} \sum_{k=1}^{\frac{K}{2}} \mu_{b,i,k}\right\}.$ Using Chernoff's method, we get
  \begin{equation}
  \label{eq:GQkgle}
  \begin{aligned}
    &\Prob{\sum_{b = 1}^{B} \min_{i \in [n]} \beta_{b,i} \leq \bar{t}} = \Prob{\exp\left(-\sum_{b = 1}^{B} \lambda \min_{i \in [n]} \beta_{b,i}\right) \geq \exp\left(-\lambda \bar{t}\right)} \\
    &\leq \exp\left(\lambda \bar{t}\right) \Exp{\exp\left(-\sum_{b = 1}^{B} \lambda \min_{i \in [n]} \beta_{b,i}\right)} \\
    &= \exp\left(\lambda \bar{t}\right) \Exp{\ExpCond{\exp\left(-\lambda \min_{i \in [n]} \beta_{B,i}\right)}{\mathcal{G}_{B-1}} \exp\left(- \sum_{b = 1}^{B - 1} \lambda \min_{i \in [n]} \beta_{b,i}\right)}
  \end{aligned}
  \end{equation}
  for all $\lambda > 0$ since $\beta_{1,i}, \dots, \beta_{B-1,i}$ are $\mathcal{G}_{B-1}$--measurable. Consider the inner expectation separately:
  \begin{align*}
    \ExpCond{\exp\left(-\lambda \min_{i \in [n]} \beta_{B,i}\right)}{\mathcal{G}_{B-1}} = \ExpCond{\max_{i \in [n]} \exp\left(-\lambda \beta_{B,i}\right)}{\mathcal{G}_{B-1}} \leq \sum_{i=1}^{n} \ExpCond{\exp\left(-\lambda \beta_{B,i}\right)}{\mathcal{G}_{B-1}},
  \end{align*}
  where we bound $\max$ by $\sum.$ Using the temporal definitions of $\{\beta_{B,i}\},$
  \begin{equation}
  \label{eq:dTcxdO}
  \begin{aligned}
    &\ExpCond{\exp\left(-\lambda \min_{i \in [n]} \beta_{B,i}\right)}{\mathcal{G}_{B-1}} \\
    &\leq \sum_{i=1}^{n} \ExpCond{\exp\left(-\lambda \min\left\{h \sum_{k=1}^{\frac{K}{2}} \eta_{B,i,k}, \tau_{\textnormal{s}} \sum_{k=1}^{\frac{K}{2}} \mu_{B,i,k}\right\}\right)}{\mathcal{G}_{B-1}} \\
    &= \sum_{i=1}^{n} \ExpCond{\max\left\{\exp\left(-\lambda h \sum_{k=1}^{\frac{K}{2}} \eta_{B,i,k}\right), \exp\left(-\lambda \tau_{\textnormal{s}} \sum_{k=1}^{\frac{K}{2}} \mu_{B,i,k}\right)\right\}}{\mathcal{G}_{B-1}} \\
    &\leq \underbrace{\sum_{i=1}^{n} \ExpCond{\exp\left(-\lambda h \sum_{k=1}^{\frac{K}{2}} \eta_{B,i,k}\right)}{\mathcal{G}_{B-1}}}_{I_1 \eqdef} + \underbrace{\sum_{i=1}^{n} \ExpCond{\exp\left(-\lambda \tau_{\textnormal{s}} \sum_{k=1}^{\frac{K}{2}} \mu_{B,i,k}\right)}{\mathcal{G}_{B-1}}}_{I_2^{\frac{K}{2}} \eqdef}.
  \end{aligned}
  \end{equation}
  Using the tower property,
  \begin{align}
    \label{eq:puvNyZQmYzl}
    I_1 = \sum_{i=1}^{n} \ExpCond{\underbrace{\ExpCond{\exp\left(-\lambda h \eta_{B,i,\frac{K}{2}}\right)}{\eta_{B,i,\frac{K}{2} - 1}, \dots, \eta_{B,i,1}, \mathcal{G}_{B-1}}}_{J_1 \eqdef} \exp\left(-\lambda h \sum_{k=1}^{\frac{K}{2} - 1} \eta_{B,i,k}\right)}{\mathcal{G}_{B-1}}.
  \end{align}
  Next,
  \begin{align*}
    J_1 
    &\eqdef \ExpCond{\exp\left(-\lambda h \eta_{B,i,\frac{K}{2}}\right)}{\eta_{B,i,\frac{K}{2} - 1}, \dots, \eta_{B,i,1}, \mathcal{G}_{B-1}} \\
    &\leq \exp\left(-\lambda t\right) \ProbCond{h \eta_{B,i,\frac{K}{2}} > t}{\eta_{B,i,\frac{K}{2} - 1}, \dots, \eta_{B,i,1}, \mathcal{G}_{B-1}} \\
    &\quad + \ProbCond{h \eta_{B,i,\frac{K}{2}} \leq t}{\eta_{B,i,\frac{K}{2} - 1}, \dots, \eta_{B,i,1}, \mathcal{G}_{B-1}} \\
    &= \exp\left(-\lambda t\right) + (1 - \exp\left(-\lambda h t\right)) \ProbCond{h \eta_{B,i,\frac{K}{2}} \leq t}{\eta_{B,i,\frac{K}{2} - 1}, \dots, \eta_{B,i,1}, \mathcal{G}_{B-1}} \\
    &\leq \exp\left(-\lambda t\right) + \ProbCond{h \eta_{B,i,\frac{K}{2}} \leq t}{\eta_{B,i,\frac{K}{2} - 1}, \dots, \eta_{B,i,1}, \mathcal{G}_{B-1}} \\
    &= \exp\left(-\lambda t\right) + \ProbCond{\eta_{B,i,\frac{K}{2}} \leq \frac{t}{h}}{\eta_{B,i,\frac{K}{2} - 1}, \dots, \eta_{B,i,1}, \mathcal{G}_{B-1}}
  \end{align*}
  for all $t \geq 0.$ Due to \eqref{eq:kEAXDdRCaGsOmJWjx1new},
  \begin{align*}
    J_1 \leq \exp\left(-\lambda t\right) + \frac{t p_{\sigma}}{h}.
  \end{align*}
  Substituting to \eqref{eq:puvNyZQmYzl},
  \begin{align*}
    I_1 \leq \sum_{i=1}^{n} \left(\exp\left(-\lambda t\right) + \frac{t p_{\sigma}}{h}\right) \ExpCond{\exp\left(-\lambda h \sum_{k=1}^{\frac{K}{2} - 1} \eta_{B,i,k}\right)}{\mathcal{G}_{B-1}}.
  \end{align*}
  Using the same arguments $\frac{K}{2} - 1$ times, we obtain
  \begin{align}
    \label{eq:Hqaut}
    I_1 \leq n \left(\exp\left(-\lambda t\right) + \frac{t p_{\sigma}}{h}\right)^{\frac{K}{2}}
  \end{align}
  for all $t \geq 0.$ The analysis of $I_2^{K / 2}$ a little bit more evolved. For all $1 \leq j \leq \frac{K}{2},$ 
  \begin{align*}
    I_2^{j} 
    &\eqdef \sum_{i=1}^{n} \ExpCond{ \exp\left(-\lambda \tau_{\textnormal{s}} \sum_{k=1}^{j} \mu_{B,i,k}\right)}{\mathcal{G}_{B-1}} \\
    &= \sum_{i=1}^{n} \ExpCond{\underbrace{\ExpCond{\exp\left(-\lambda \tau_{\textnormal{s}} \mu_{B,i,j}\right)}{\mu_{B,i,j - 1}, \dots, \mu_{B,i,1}, \mathcal{G}_{B-1}} \exp\left(-\lambda \tau_{\textnormal{s}} \sum_{k=1}^{j - 1} \mu_{B,i,k}\right)}_{K_2 \eqdef}}{\mathcal{G}_{B-1}}
  \end{align*}
  and $I_2^{0} = n.$
  Let us define $u = \sum_{k=1}^{j - 1} \mu_{B,i,k}.$ If $u \geq \frac{d}{2},$ then
  \begin{align*}
    K_2 = \ExpCond{\exp\left(-\lambda \tau_{\textnormal{s}} \mu_{B,i,j}\right)}{\mu_{B,i,j - 1}, \dots, \mu_{B,i,1}, \mathcal{G}_{B-1}} \exp\left(-\lambda \tau_{\textnormal{s}} u\right) \leq \exp\left(-\frac{\lambda \tau_{\textnormal{s}} d}{2}\right)
  \end{align*}
  Otherwise, if $u < \frac{d}{2},$ then, for all $t \geq 0,$
  \begin{align*}
    &\ExpCond{\exp\left(-\lambda \tau_{\textnormal{s}} \mu_{B,i,j}\right)}{\mu_{B,i,j - 1}, \dots, \mu_{B,i,1}, \mathcal{G}_{B-1}} \\
    &\leq \exp\left(- \lambda t\right) + \ProbCond{\mu_{B,i,j} \leq \frac{t}{\tau_{\textnormal{s}}}}{\mu_{B,i,j - 1}, \dots, \mu_{B,i,1}, \mathcal{G}_{B-1}} \\
    &\leq \exp\left(- \lambda t\right) + \frac{K \frac{t}{\tau_{\textnormal{s}}}}{d - u - \frac{t}{\tau_{\textnormal{s}}} + 1} \leq \exp\left(- \lambda t\right) + \frac{2 K t}{\tau_{\textnormal{s}} d - 2 t}
  \end{align*}
  due to \eqref{eq:kEAXDdRCaGsOmJWjx2new} and $u < \frac{d}{2}.$ Combining both cases,
  \begin{align*}
    K_2 \leq \max\left\{\left(\exp\left(- \lambda t\right) + \frac{2 K t}{\tau_{\textnormal{s}} d - 2 t}\right) \exp\left(-\lambda \tau_{\textnormal{s}} \sum_{k=1}^{j - 1} \mu_{B,i,k}\right), \exp\left(-\frac{\lambda \tau_{\textnormal{s}} d}{2}\right)\right\}
  \end{align*}
  for all $t < \frac{\tau_{\textnormal{s}} d}{2}$ and $u \geq 0,$ and
  \begin{align}
    I_2^{j} &\leq \left(\exp\left(- \lambda t\right) + \frac{2 K t}{\tau_{\textnormal{s}} d - 2 t}\right) \sum_{i=1}^{n} \ExpCond{\exp\left(-\lambda \tau_{\textnormal{s}} \sum_{k=1}^{j - 1} \mu_{B,i,k}\right)}{\mathcal{G}_{B-1}} + n \exp\left(-\frac{\lambda \tau_{\textnormal{s}} d}{2}\right) \nonumber \\
    &= \left(\exp\left(- \lambda t\right) + \frac{2 K t}{\tau_{\textnormal{s}} d - 2 t}\right) I_2^{j - 1} + n \exp\left(-\frac{\lambda \tau_{\textnormal{s}} d}{2}\right), \label{eq:JrAURKAjmWiP}
  \end{align}
  where we use the inequality $\max\{a, b\} \leq a + b$ for all $a, b \geq 0.$ 
  Substituting \eqref{eq:Hqaut} to \eqref{eq:dTcxdO},
  \begin{align*}
    \ExpCond{\exp\left(-\lambda \min_{i \in [n]} \beta_{B,i}\right)}{\mathcal{G}_{B-1}} \leq n \left(\exp\left(-\lambda t\right) + \frac{t p_{\sigma}}{h}\right)^{\frac{K}{2}} + I_2^{\frac{K}{2}},
  \end{align*}
  where $t, \lambda \geq 0$ are free parameters. Taking $t = \frac{4 + \frac{2}{K} \log (2 n)}{\lambda},$
  \begin{align*}
    \ExpCond{\exp\left(-\lambda \min_{i \in [n]} \beta_{B,i}\right)}{\mathcal{G}_{B-1}} \leq n \left(\frac{e^{-4}}{(2n)^{2/K}} + \frac{(4 + \frac{2}{K} \log(2 n))p_{\sigma}}{\lambda h} \right)^{\frac{K}{2}} + I_2^{\frac{K}{2}}.
  \end{align*}
  Choosing $\lambda = e^{4} (2 n)^{2 / K} (4 + \frac{2}{K} \log (2n)) \max\left\{\frac{p_{\sigma}}{h}, \frac{p_{K}}{\tau_{\textnormal{s}}}\right\},$
  \begin{align*}
    \ExpCond{\exp\left(-\lambda \min_{i \in [n]} \beta_{B,i}\right)}{\mathcal{G}_{B-1}} \leq n \left(\frac{2 e^{-4}}{(2n)^{2/K}}\right)^{\frac{K}{2}} + I_2^{\frac{K}{2}} = \frac{1}{2} \left(2 e^{-4}\right)^{\frac{K}{2}} + I_2^{\frac{K}{2}}.
  \end{align*}
  With this choice of $\lambda$ and $t$ in \eqref{eq:JrAURKAjmWiP}, we get
  \begin{align*}
    I_2^{j} 
    &\leq \left(\frac{3 e^{-4}}{(2 n)^{2 / K}}\right) I_2^{j - 1} + n \exp\left(-e^{4} (2 n)^{2 / K} (4 K + 2 \log (2n))\right) \\
    &\leq \left(\frac{3 e^{-4}}{(2 n)^{2 / K}}\right) I_2^{j - 1} + e^{-4 e^4 K} \leq n \left(\frac{3 e^{-4}}{(2 n)^{2 / K}}\right)^{j} + 2 e^{-4 e^4 K}
  \end{align*}
  for all $j \geq 1$ because $t \leq \frac{\tau_{\textnormal{s}} d}{2 e^{4} (2 n)^{2 / K} K},$ $\lambda = \frac{4 + \frac{2}{K} \log (2 n)}{t} \geq 2 e^{4} (2 n)^{2 / K} (4 + \frac{2}{K} \log (2n)) \frac{K}{\tau_{\textnormal{s}} d}.$ In the third inequality, we unrolled the recursion with $I^0_2 = n$ and use $\sum_{j=0}^{\infty} \left(\frac{3 e^{-4}}{(2 n)^{2 / K}}\right)^j \leq 2.$
  
  Finally, $I_2^{K / 2} \leq \frac{1}{2} \left(3 e^{-4}\right)^{\frac{K}{2}} + 2 e^{-2 e^4 K} \leq \frac{1}{2} e^{-K}$ and 
  \begin{align*}
    \ExpCond{\exp\left(-\lambda \min_{i \in [n]} \beta_{B,i}\right)}{\mathcal{G}_{B-1}} \leq \frac{1}{2} \left(2 e^{-4}\right)^{\frac{K}{2}} + I_2^{K / 2} \leq \frac{1}{2} e^{-K} + \frac{1}{2} e^{-K} \leq e^{-K}.
  \end{align*}
  Substituting the last inequality to \eqref{eq:GQkgle} and repeating the steps $B - 1$ more times, we get
  \begin{align*}
    &\Prob{\sum_{b = 1}^{B} \min_{i \in [n]} \beta_{b,i} \leq \bar{t}} \leq \exp\left(\lambda \bar{t} - B K\right).
  \end{align*}
  It is left to take $\bar{t} = \frac{B K + \log \delta}{\lambda}.$
\end{proof}

\newpage

\section{Main Theorem with Worker-to-Server Communication}
\label{sec:combined_app}
In this section, we extend the result of Theorem~\ref{thm:main} by taking into account the communication time $\tau_{\textnormal{w}}.$ 
However, in this section, we ignore the communication times from the server to the workers in the analysis,
which will be sufficient to obtain an almost tight lower bound if combined with Theorem~\ref{thm:main}.

\citet{tyurin2024shadowheart} consider a similar setup with $\tau_{\textnormal{s}} = 0.$ However, their protocol does not allow the workers to modify the iterate computed by the server and operates in the primal space. For instance, the workers are not allowed to run local steps. Moreover, $\bar{P}^k_i$ are fixed in their version of Protocol~\ref{alg:simplified_time_multiple_oracle_protocol}. We improve upon this in the following theorem:
\begin{theorem}
  \label{thm:main_two}
  Let $L, \Delta, \varepsilon, \sigma^2, n, d, \tau_{\textnormal{w}}, \tau_{\textnormal{s}}, h > 0$ be any numbers such that $\varepsilon < c_1 L \Delta$ and $d \geq \frac{\Delta L}{c_2 \varepsilon}.$ Consider Protocol~\ref{alg:simplified_time_multiple_oracle_protocol}. For all $i \in [n]$ and $k \geq 0,$ compressor $\bar{\cC}^k_i$ selects and transmits $\bar{P}^k_i$ \emph{uniformly random coordinates without replacement, scaled by any constants},
  where $\bar{P}^k_i \in \{0, \dots, d\}$ may vary across each compressor.
  For any algorithm $A \in \cA_{\textnormal{zr}},$ there exists a function $f \,:\, \R^d \to \R$ such that $f$ is $L$-smooth, $f(0) - \inf_{x \in \R^d} f(x) \leq \Delta,$ exists a stochastic gradient oracle that satisfies Assumption~\ref{ass:stochastic_variance_bounded}, and 
  $\Exp{\inf_{y \in S_t}\norm{\nabla f(y)}^2} > \varepsilon$ 
  for all 
  $$t \leq c_3 \times \frac{L \Delta}{\varepsilon \log (n + 1)} \cdot \min\left\{\max\left\{\frac{h \sigma^2}{n \varepsilon}, \frac{\tau_{\textnormal{w}} d}{n}, \sqrt{\frac{h \sigma^2 \tau_{\textnormal{w}} d}{n \varepsilon}}, h, \tau_{\textnormal{w}}\right\}, \max\left\{\frac{h \sigma^2}{\varepsilon}, h\right\}\right\},$$
  where
  $S_t$ is the set of all possible points that can be constructed by $A$ up to time $t$ based on $I$ and $\{I_i\}.$ The quantities $c_1,$ $c_2,$ and $c_3$ are universal constants.
\end{theorem}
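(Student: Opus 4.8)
The plan is to follow the template of the proof of Theorem~\ref{thm:main} and of \citet{tyurin2024shadowheart}, with three adaptations. First, since $\tau_{\textnormal{s}} = 0$, server-to-worker broadcasts are free and instantaneous, so there is no need for the new $F_{T,K,a}$ construction and we may work directly with the classical Carmon et al.\ chain function $F_T$ of \eqref{eq:worst_case_prev} (equivalently $K=1$, $a=e$), relying on Lemmas~\ref{lemma:worst_function_1} and \ref{lemma:worst_function}. Second, the extra freedom that workers compute arbitrary iterates from $I_i$ (local steps, primal modifications) is harmless: by the zero-respecting assumption a worker cannot manufacture nonzero values in coordinates it has not seen, so the progress analysis is unaffected and the only genuinely new freedom is which (already-seen) vector the worker feeds to its w2s compressor. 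Third, the variable packet sizes $\bar P^k_i$ are handled exactly as the variable $P^k_i$ are handled in Lemma~\ref{lemma:main_concet_lemma}, via the observation that pushing a \emph{given} coordinate to the server through a uniform-random-sparsification packet costs $\gtrsim \tau_{\textnormal{w}} d$ in expectation regardless of $\bar P^k_i$.

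The first concrete step is the scaled construction: set $f(x) \eqdef \frac{L\lambda^2}{\ell_1} F_T(x_{[T]}/\lambda)$ with $T = \Theta(L\Delta/\varepsilon)$, $\lambda = \Theta(\sqrt{\varepsilon}/L)$, and $\ell_1 = 152$, so that $f$ is $L$-smooth, $f(0) - \inf f \leq \Delta$, and $\norm{\nabla f(x)}^2 > \varepsilon$ whenever $\textnormal{prog}(x_{[T]}) < T$; and attach the Arjevani et al.\ stochastic oracle \eqref{eq:stoch_constr} with $p_\sigma = \Theta(\varepsilon/\sigma^2)$ (up to the universal constant $\gamma_\infty$ from Lemma~\ref{lemma:worst_function}), guaranteeing $\sigma^2$-bounded variance while a single stochastic-gradient evaluation exposes the next coordinate only with probability $p_\sigma$. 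Because $\tau_{\textnormal{s}} = 0$ I may assume w.l.o.g.\ that every worker instantly sees all of $I$, so the system state is summarized by $I$ together with $\{I_i\}$, and the relevant progress quantity is $\max$ over all vectors available in $I\cup\bigcup_i I_i$ of $\textnormal{prog}$.

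Next I would reduce the lower bound to a concentration statement. The progress quantity can advance from $j$ to $j+1$ only in one of two ways: (a) a single worker advances a run of consecutive coordinates purely from its own computations, which requires $\gtrsim 1/p_\sigma$ lucky gradients per coordinate ($h$ seconds each); or (b) the coordinate is \emph{shared} through the server, which requires some worker to compute a lucky gradient at a point of progress $j$ \emph{and} then have coordinate $j+1$ selected by its random-sparsification compressor — a packet of size $\bar P$ carrying a given coordinate with probability $\le \bar P/(d-u)$ at cost $\tau_{\textnormal{w}}\bar P$, so that a specific coordinate reaches the server only after $\gtrsim \tau_{\textnormal{w}} d$ expected seconds. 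Partitioning the $T$ coordinates into the ``local'' and ``shared'' ones, the elapsed time is lower bounded either by a single-worker sum $\sum_k h\,\eta_k$ of gradient-geometrics with parameter $p_\sigma$, which concentrates to $\gtrsim \frac{hT}{p_\sigma}\simeq \frac{L\Delta}{\varepsilon}\cdot\frac{h\sigma^2}{\varepsilon}$ (plus the trivial $hT$), giving the $B$ branch; or by a ``distributed'' sum over rounds of the form $\sum_b \min_{i\in[n]}(\text{compute-geometrics}\times\text{communication-geometrics})$. The key technical step is a Chernoff/martingale concentration bound for this distributed sum — an analogue of Lemma~\ref{lemma:main_concet_lemma} in which the exponential-moment parameter is balanced against both the compute side (parameter $p_\sigma$, unit $h$) and the communication side (parameter $\sim 1/d$, unit $\tau_{\textnormal{w}}$), and $\min_{i\in[n]}$ is handled by a union bound over the $n$ workers — which produces exactly the five-term maximum $\max\{h\sigma^2/(n\varepsilon),\ \tau_{\textnormal{w}} d/n,\ \sqrt{h\sigma^2\tau_{\textnormal{w}} d/(n\varepsilon)},\ h,\ \tau_{\textnormal{w}}\}$, the coupling term emerging from an AM--GM trade-off between the number of gradients and the number of transmitted coordinates a worker uses per round.

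Finally I would combine the two branches: choosing the round length and block count as in Theorem~\ref{thm:main} and substituting $T = \Theta(L\Delta/\varepsilon)$, $p_\sigma = \Theta(\varepsilon/\sigma^2)$, and $d \geq \Theta(L\Delta/\varepsilon)$, the high-probability lower bound becomes $\gtrsim \frac{L\Delta}{\varepsilon\log(n+1)}\min\{A,B\}$ with $A,B$ as in the statement; expectation/Markov bookkeeping identical to Step~4 of Theorem~\ref{thm:main} (taking $\delta = \tfrac12$) then converts this into $\Exp{\inf_{y\in S_t}\norm{\nabla f(y)}^2} > \varepsilon$. I expect the main obstacle to be precisely the concentration lemma for the distributed sum: extracting all five terms of the maximum — and in particular the coupling term $\sqrt{h\sigma^2\tau_{\textnormal{w}} d/(n\varepsilon)}$ — from a single Chernoff argument while simultaneously accommodating the variable packet sizes $\bar P^k_i$ and the worker-side local-step freedom, without losing more than one logarithmic factor in $n$.
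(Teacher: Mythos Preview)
Your proposal is correct and takes essentially the same approach as the paper: the classical $F_T$ construction with the Arjevani et al.\ oracle, a reduction to a random-time lower bound, and a Chernoff concentration that absorbs the $\min_{i\in[n]}$ by a union bound, followed by the same $\delta=\tfrac12$ bookkeeping. The one structural point worth noting is that the paper does not partition coordinates into ``local'' and ``shared'' and then combine two branches; instead it writes a single recursion
\[
y_{k,i} \;=\; \min_{j\in[n]}\bigl\{h\,\eta_{k,j} + \mathbbm{1}[i\neq j]\,\tau_{\textnormal{w}}\,\mu_{k,j} + y_{k-1,j}\bigr\},
\]
bounds $\max_i \Exp{e^{-s y_{k,i}}}$ by a one-step contraction, and lets the $\min\{A,B\}$ fall out of a single Chernoff truncation level $t=\min\{t_1,t_2\}$: the $j=i$ term (no compression cost) forces $t_2\simeq h/p_\sigma$ and yields the $B$ branch, while the $j\neq i$ term is the \emph{product} $\min\{1,tp_d/\tau_{\textnormal{w}}\}\cdot\min\{1,tp_\sigma/h\}$, whose five-case analysis of $t_1$ gives exactly your five-term maximum including the coupling term. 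This recursive packaging is the cleanest way to make rigorous the concentration step you correctly identify as the main obstacle, and it also sidesteps the subtlety that in your partition framing the ``local'' coordinates may be spread across different workers along the chain.
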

\begin{proof}
  The proof closely follows the analysis from \citep{tyurin2024shadowheart,tyurin2024optimal} and the proof of Theorem~\ref{thm:main}, but with some important modifications. In this proof, it is sufficient to work with \eqref{eq:worst_case_prev} and Lemmas~\ref{lemma:worst_function_1} and \ref{lemma:worst_function}.

  Let us fix $\lambda > 0$ and define the function $f \,:\, \R^d \to \R$ such that
  $$f(x) \eqdef \frac{L \lambda^2}{\ell_1} F_T\left(\frac{x_{[T]}}{\lambda}\right),$$ where the function $F_T$ is given in \eqref{eq:worst_case_prev} and $x_{[T]} \in \R^T$ is the vector with the first $T$ coordinates of $x \in \R^d.$ Notice that the last $d - T$ coordinates are artificial.

  First, we have to show that $f$ is $L$-smooth and $f(0) - \inf_{x \in \R^d} f(x) \leq \Delta.$ Using Lemma~\ref{lemma:worst_function},
  \begin{align*}
      \norm{\nabla f(x) - \nabla f(y)} 
      &= \frac{L \lambda}{\ell_1} \norm{\nabla F_{T}\left(\frac{x_{[T]}}{\lambda}\right) - \nabla F_{T}\left(\frac{y_{[T]}}{\lambda}\right)} \leq L \lambda \norm{\frac{x_{[T]}}{\lambda} - \frac{y_{[T]}}{\lambda}} \\
      &= L \norm{x_{[T]} - y_{[T]}} \leq L \norm{x - y} \quad \forall x, y \in \R^d.
  \end{align*}
  
  Taking
  \begin{align*}
    T = \left\lfloor\frac{\Delta \ell_1}{L \lambda^2 \Delta^0}\right\rfloor,
  \end{align*}
  \begin{align*}
      f(0) - \inf_{x \in \R^d} f(x) = \frac{L \lambda^2}{\ell_1} (F_{T}\left(0\right) - \inf_{x \in \R^T} F_{T}(x)) \leq \frac{L \lambda^2 \Delta^0 T}{\ell_1} \leq \Delta.
  \end{align*}
  due to Lemma~\ref{lemma:worst_function}.

  Next, we construct a stochastic gradient mapping. For our lower bound, we define
  \begin{align}
    [\nabla f(x; \xi)]_j \eqdef \nabla_j f(x) \left(1 + \mathbbm{1}\left[j > \textnormal{prog}(x)\right]\left(\frac{\xi}{p_{\sigma}} - 1\right)\right) \quad \forall x \in \R^d,
    \label{eq:clKdq}
  \end{align}
  and let $\mathcal{D}_{\xi} = \textnormal{\emph{Bernoulli}}(p_{\sigma})$ for all $j \in [n],$ where $p_{\sigma} \in (0, 1].$ 
  We denote $[x]_j$ as the $j$\textsuperscript{th} coordinate of a vector $x \in \R^d.$ 
  We choose
  $$p_{\sigma} \eqdef \min\left\{\frac{L^2 \lambda^2 \gamma_{\infty}^2}{\sigma^2 \ell_1^2}, 1\right\}.$$ 
  Then this mapping is unbiased and $\sigma^2$-variance-bounded.
  Indeed, $$\Exp{[\nabla f(x, \xi)]_i} = \nabla_i f(x) \left(1 + \mathbbm{1}\left[i > \textnormal{prog}(x)\right]\left(\frac{\Exp{\xi}}{p_{\sigma}} - 1\right)\right) = \nabla_i f(x)$$ for all $i \in [d],$ and
  \begin{align*}
      \Exp{\norm{\nabla f(x; \xi) - \nabla f(x)}^2} \leq \max_{j \in [d]} \left|\nabla_j f(x)\right|^2\Exp{\left(\frac{\xi}{p_{\sigma}} - 1\right)^2}
  \end{align*}
  because the difference is non-zero only in one coordinate. Thus
  \begin{align*}
      \Exp{\norm{\nabla f(x, \xi) - \nabla f(x)}^2} &\leq \frac{\norm{\nabla f(x)}_{\infty}^2(1 - p_{\sigma})}{p_{\sigma}} = \frac{L^2 \lambda^2 \norm{F_{T}\left(\frac{x_{[T]}}{\lambda}\right)}_{\infty}^2(1 - p_{\sigma})}{\ell^2_1 p_{\sigma}} \\
      &\leq \frac{L^2 \lambda^2 \gamma_{\infty}^2 (1 - p_{\sigma})}{\ell^2_1 p_{\sigma}} \leq \sigma^2,
  \end{align*}
  where we use Lemma~\ref{lemma:worst_function}.

  Taking
  $$\lambda = \frac{\sqrt{2 \varepsilon} \ell_1}{L},$$
  we ensure that 
  \begin{align}
    \label{eq:KDYUCCBClJMGi}
    \norm{\nabla f(x)}^2 = \frac{L^2 \lambda^2}{\ell_1^2}\norm{\nabla F_T \left(\frac{x_{[T]}}{\lambda}\right)}^2 > 2 \varepsilon \mathbbm{1}\left[\textnormal{prog}(x_{[T]}) < T\right]
  \end{align}
  for all $x \in \R^d,$ where we use Lemma~\ref{lemma:worst_function_1}. Thus
  \begin{align}
    \label{eq:13c6d540-5fae-5705-a665-b89fda334a29}
    T = \left\lfloor\frac{\Delta L}{2 \varepsilon \ell_1 \Delta^0}\right\rfloor
  \end{align}
  and
  $$p_{\sigma} = \min\left\{\frac{2 \varepsilon \gamma_{\infty}^2}{\sigma^2}, 1\right\}.$$
  
  Using the same reasoning as in \citet{tyurin2024shadowheart} and our Theorem~\ref{thm:main}, we define two sets of random variables. Let $\eta_{1,i}$ be the first computed stochastic gradient when the oracle draws a ``successful'' Bernoulli trial in \eqref{eq:clKdq} at worker $i.$ Then, 
  \begin{align*}
    \Prob{\eta_{1,i} \leq t} \leq \sum_{i=1}^{\flr{t}} (1 - p_{\sigma})^{i - 1} p_{\sigma} \leq p_{\sigma} \flr{t}
  \end{align*}
  for $t \geq 0,$ and 
  \begin{align*}
    \Prob{\eta_{1,i} \leq t} \leq \min\{p_{\sigma} \flr{t}, 1\}
  \end{align*}

  For all $i \in [n],$ the server receives a stream of coordinates from worker $i.$ Let $\mu_{1,i}$ be the number of received coordinates by the server from worker $i$ until the moment when the index of the last received coordinate is $1$. 
  Let us define 
  \begin{align*}
    p_{d} \eqdef \frac{2}{d}.
  \end{align*}
  Similarly to the proof of Theorem~\ref{thm:main} with $K = 1$ (see \eqref{eq:TXdhtULbbzaLGikrg}),
  \begin{align*}
    \ProbCond{\mu_{1,i} \leq t}{\eta_{1,i}} = \sum_{j = 1}^{\flr{t}} \Prob{\mu_{1,i,1} = j} \leq \frac{\flr{t}}{d - t + 1}.
  \end{align*}
  for all $t \leq d.$ Thus,
  \begin{align*}
    \ProbCond{\mu_{1,i} \leq t}{\eta_{1,i}} \leq \begin{cases} 
      \frac{\flr{t}}{d - t + 1}, & t \leq \frac{d}{2} \\
      1, & t > \frac{d}{2} \\
    \end{cases} \leq \begin{cases} 
      \flr{t} p_{d}, & t \leq \frac{d}{2} \\
      1, & t > \frac{d}{2}
    \end{cases} \leq \min\{2 \flr{t} p_{d}, 1\}
  \end{align*}
  for all $t \geq 0.$ 

  There are two ways in which worker $i$ can discover the first coordinate. Either the worker is ``lucky'' and draws a successful Bernoulli random variable locally, or it gets to discover the first coordinate through the server. Thus, worker $i$ requires at least 
  \begin{align*}
    y_{1,i} \eqdef \min\left\{h \eta_{1,i}, \min_{j \in [n], j\neq i} \left\{h \eta_{1,j} + \tau_{\textnormal{w}} \mu_{1,j}\right\}\right\} = \min_{j \in [n]} \left\{h \eta_{1,j} + \mathbbm{1}[i \neq j] \tau_{\textnormal{w}} \mu_{1,j}\right\}
  \end{align*}
  seconds because $h \eta_{1,i}$ is the minimal time to discover the first coordinate locally, and $\min_{j \in [n], j\neq i} \left\{h \eta_{1,j} + \tau_{\textnormal{w}} \mu_{1,j}\right\}$ is the minimal time to discover the first coordinate from other workers via the server, which can transmit it to worker $i.$ 
  \begin{remark}
    The previous derivations hold for all $\tau_{\textnormal{s}} > 0.$ If we start taking the communication time $\tau_{\textnormal{s}}$ into account, then $y_{1,i}$ may only increase. For all $\tau_{\textnormal{s}} > 0,$ worker $i$ still requires at least $y_{1,i}$ seconds for the same reason that $h \eta_{1,i}$ is the minimal time to discover the first coordinate locally, and $\min_{j \in [n], j\neq i} \left\{h \eta_{1,j} + \tau_{\textnormal{w}} \mu_{1,j}\right\}$ is the minimal time to discover the first coordinate from other workers. If we start taking into account the communication time from $\tau_{\textnormal{s}},$ the lower bound $$\min\left\{h \eta_{1,i}, \min_{j \in [n], j\neq i} \left\{h \eta_{1,j} + \tau_{\textnormal{w}} \mu_{1,j}\right\}\right\}$$ still holds.
  \end{remark}

  Using the same reasoning, worker $i$ requires at least 
  \begin{align*}
    y_{k,i} \eqdef \min_{j \in [n]} \left\{h \eta_{k,j} + \mathbbm{1}[i \neq j] \tau_{\textnormal{w}} \mu_{k,j} + y_{k-1, j}\right\}
  \end{align*}
  seconds to discover the $k$\textsuperscript{th} coordinate for all $k \geq 2,$ where
  \begin{align}
    \ProbCond{\eta_{k,i} \leq t}{\mathcal{G}_{k-1}} \leq \min\{\flr{t} p_{\sigma}, 1\}
  \end{align}
  for all $k \geq 1,$ $i \in [n],$ and $t \geq 0,$ and
  \begin{align}
    \ProbCond{\mu_{k,i} \leq t}{\eta_{k,i}, \mathcal{G}_{k-1}} \leq \min\{2 \flr{t} p_{d}, 1\}
  \end{align}
  for all $k \geq 1,$ $i \in [n],$ and $t \geq 0,$ where $\mathcal{G}_{k-1}$ is the sigma-algebra generated by $\{\eta_{k',i}\}_{i \in [n], k' < k}$ and $\{\mu_{k',i}\}_{i \in [n], k' < k}.$ Thus, the first possible time when the workers and the server can discover the $T$\textsuperscript{th} coordinate is
  \begin{align*}
    y_{T} \eqdef \min_{i \in [n]} y_{T,i}.
  \end{align*}
  For this random variable, we prove the lemma below (see Section~\ref{sec:main_lemma_two}).
  \begin{restatable}{lemma}{MAINLEMMATWO}
  \label{lemma:main_concet_lemma_two}
  Let $\{\eta_{k,i}\}_{i,k \geq 0}$ and $\{\mu_{k,i}\}_{i,k \geq 0}$ be random variables such that
  \begin{align}
    \ProbCond{\eta_{k,i} \leq t}{\mathcal{G}_{k-1}} \leq \min\{\flr{t} p_{\sigma}, 1\}
    \label{eq:HkLrpsR}
  \end{align}
  for all $k \geq 1,$ $i \in [n],$ and $t \geq 0,$ and
  \begin{align}
    \ProbCond{\mu_{k,i} \leq t}{\eta_{k,i}, \mathcal{G}_{k-1}} \leq \min\{2 \flr{t} p_{d}, 1\},
    \label{eq:HkLrpsR2}
  \end{align}
  for all $k \geq 1,$ $i \in [n],$ and $t \geq 0,$ where $\mathcal{G}_{k-1}$ is the sigma-algebra generated by $\{\eta_{k',i}\}_{i \in [n], k' < k}$ and $\{\mu_{k',i}\}_{i \in [n], k' < k}.$ Then
  \begin{align*}
    \Prob{y_{T} \leq \bar{t}} \leq \delta
  \end{align*}
  with 
  \begin{align}
    \bar{t} \eqdef \frac{T - \log n + \log \delta}{32 \log (8n)} \cdot \min\left\{\max\left\{\frac{h}{p_{\sigma} n}, \frac{\tau_{\textnormal{w}}}{p_{d} n}, \frac{\sqrt{h \tau_{\textnormal{w}}}}{\sqrt{p_{\sigma} p_{d} n}}, h, \tau_{\textnormal{w}}\right\}, \frac{h}{p_{\sigma}}\right\},
  \end{align}
  where 
  \begin{align*}
    y_{T} \eqdef \min_{i \in [n]} y_{T,i},
  \end{align*}
  \begin{align*}
    y_{k,i} \eqdef \min_{j \in [n]} \left\{h \eta_{k,j} + \mathbbm{1}[i \neq j] \tau_{\textnormal{w}} \mu_{k,j} + y_{k-1, j}\right\}
  \end{align*}
  for all $k \geq 1, i \in [n]$ and $y_{0,i} = 0$ for all $i \in [n].$
\end{restatable}

Thus, with probability at least $1 - \delta$, any zero-respecting algorithm requires at least $\bar{t}$ seconds to discover the last coordinate. 
Due to \eqref{eq:KDYUCCBClJMGi},
  \begin{align*}
  \inf_{y \in S_{t}} \norm{\nabla f(y)}^2 > 2 \varepsilon \inf_{y \in S_{t}} \mathbbm{1}\left[\textnormal{prog}(y_{[T]}) < T\right],
  \end{align*}
  where $S_{t}$ is the set of all possible candidate points to be an $\varepsilon$--stationary point up to time $t,$ which can be computed by $A.$ Taking $\delta = \frac{1}{2},$
  \begin{align*}
    \Exp{\inf_{y \in S_{t}} \norm{\nabla f(y)}^2} > 2 \varepsilon \Exp{\inf_{y \in S_{t}} \mathbbm{1}\left[\textnormal{prog}(y_{[T]}) < T\right]} \geq \varepsilon,
  \end{align*}
  for $t = \frac{1}{2} \bar{t}$ because $\textnormal{prog}(y_{[T]}) < T$ for all $y \in S_{t}$ with probability at least $\frac{1}{2}.$ It is left to substitute all quantities:
  \begin{align*}
    t = \frac{\left\lfloor\frac{\Delta L}{2 \varepsilon \ell_1 \Delta^0}\right\rfloor - \log n + \log \frac{1}{2}}{64 \log (8n)} \cdot \min\left\{\max\left\{\frac{h}{n} \max\left\{\frac{\sigma^2}{2 \varepsilon \gamma_{\infty}^2}, 1\right\}, \frac{\tau_{\textnormal{w}} d}{2 n}, \frac{\sqrt{h d \tau_{\textnormal{w}}}}{\sqrt{2 n}} \sqrt{\frac{\sigma^2}{2 \varepsilon \gamma_{\infty}^2}}, h, \tau_{\textnormal{w}}\right\}, h \max\left\{\frac{\sigma^2}{2 \varepsilon \gamma_{\infty}^2}, 1\right\}\right\}.
  \end{align*}
  Since $\ell_1, \Delta^0, \gamma_{\infty}$ are universal constants, assuming $\varepsilon < c_1 L \Delta$ for some small universal $c_1 > 0,$ we get
  \begin{align*}
    t \geq c_3 \times \frac{L \Delta}{\varepsilon \log (n + 1)} \cdot \min\left\{\max\left\{\frac{h \sigma^2}{n \varepsilon}, \frac{\tau_{\textnormal{w}} d}{n}, \sqrt{\frac{h \sigma^2 \tau_{\textnormal{w}} d}{n \varepsilon}}, h, \tau_{\textnormal{w}}\right\}, \max\left\{\frac{h \sigma^2}{\varepsilon}, h\right\}\right\}
  \end{align*}
  for some small universal $c_3 > 0.$ Notice that we can take any dimension $d$ such that
  \begin{align*}
    d \geq T = \Theta\left(\frac{L \Delta}{\varepsilon}\right).
  \end{align*}
  \end{proof}
\subsection{Main Concentration Lemma}
\label{sec:main_lemma_two}
\MAINLEMMATWO*

\begin{proof}
Using the Chernoff method for any $s > 0$ and $k \geq 1$, we get
\begin{align*}
  \Prob{y_{k} \leq \bar{t}} 
  &= \Prob{- s y_{k} \geq - s \bar{t}} = \Prob{e^{- s y_{k}} \geq e^{- s \bar{t}}} \leq e^{s \bar{t}} \Exp{e^{- s y_{k}}} \\
  &= e^{s \bar{t}} \Exp{\exp\left(- s \min_{j \in [n]} y_{k,j}\right)} = e^{s \bar{t}} \Exp{\max_{j \in [n]} \exp\left(- s y_{k,j}\right)}.
\end{align*}
Bounding the maximum by the sum,
\begin{equation}
\begin{aligned}
  \Prob{y_{k} \leq t} 
  &\leq e^{s t} \sum_{j=1}^n \Exp{\exp\left(- s y_{k,j}\right)} \leq n e^{s t} \max_{j \in [n]} \Exp{\exp\left(- s y_{k,j}\right)}.
  \label{eq:iAeohfnTfnYY}
\end{aligned}
\end{equation}
We focus on the last exponent separately. For all $i \in [n],$
\begin{equation}
\begin{aligned}
  \Exp{\exp\left(- s y_{k, i}\right)} 
  &= \Exp{\exp\left(- s \min_{j \in [n]} \left\{h \eta_{k,j} + \mathbbm{1}[i \neq j] \tau_{\textnormal{w}} \mu_{k,j} + y_{k-1, j}\right\}\right)} \\
  &= \Exp{\max_{j \in [n]}\exp\left(- s \left(h \eta_{k,j} + \mathbbm{1}[i \neq j] \tau_{\textnormal{w}} \mu_{k,j} + y_{k-1, j}\right)\right)} \\
  &\leq \sum_{j=1}^n \Exp{\exp\left(- s \left(h \eta_{k,j} + \mathbbm{1}[i \neq j] \tau_{\textnormal{w}} \mu_{k,j} + y_{k-1, j}\right)\right)} \\
  &= \sum_{j=1}^n \Exp{\underbrace{\ExpCond{\exp\left(- s \left(h \eta_{k,j} + \mathbbm{1}[i \neq j] \tau_{\textnormal{w}} \mu_{k,j}\right)\right)}{\mathcal{G}_{k-1}}}_{I_1 \eqdef } \exp\left(- s y_{k-1, j}\right)} \\
\end{aligned}
\label{eq:ErkxlMMOhR}
\end{equation}
Considering the inner expectation separately:
\begin{align*}
  I_1 
  &= \ExpCond{\exp\left(- s \left(h \eta_{k,j} + \mathbbm{1}[i \neq j] \tau_{\textnormal{w}} \mu_{k,j}\right)\right)}{\mathcal{G}_{k-1}} \\
  &\leq \exp\left(- s t\right) + \ProbCond{h \eta_{k,j} + \mathbbm{1}[i \neq j] \tau_{\textnormal{w}} \mu_{k,j} \leq t}{\mathcal{G}_{k-1}}
\end{align*}
for all $t \geq 0.$ Using the properties of condition expectations,
\begin{align*}
  I_1 &\leq \exp\left(- s t\right) + \ProbCond{h \eta_{k,j} \leq t, \mathbbm{1}[i \neq j] \tau_{\textnormal{w}} \mu_{k,j} \leq t}{\mathcal{G}_{k-1}} \\
  &= \exp\left(- s t\right) + \ExpCond{\mathbbm{1}\left[h \eta_{k,j} \leq t\right] \mathbbm{1}\left[\mathbbm{1}[i \neq j] \tau_{\textnormal{w}} \mu_{k,j} \leq t\right]}{\mathcal{G}_{k-1}} \\
  &= \exp\left(- s t\right) + \ExpCond{\ExpCond{\mathbbm{1}\left[\mathbbm{1}[i \neq j] \tau_{\textnormal{w}} \mu_{k,j} \leq t\right]}{\eta_{k,j}, \mathcal{G}_{k-1}} \mathbbm{1}\left[h \eta_{k,j} \leq t\right] }{\mathcal{G}_{k-1}} \\
  &= \exp\left(- s t\right) + \ExpCond{\ProbCond{\mathbbm{1}[i \neq j] \tau_{\textnormal{w}} \mu_{k,j} \leq t}{\eta_{k,j}, \mathcal{G}_{k-1}} \mathbbm{1}\left[h \eta_{k,j} \leq t\right] }{\mathcal{G}_{k-1}}.
\end{align*}
If $i = j,$ then we bound the probability by $1$ and get
\begin{align*}
  I_1 
  &\leq \exp\left(- s t\right) + \ExpCond{\mathbbm{1}\left[h \eta_{k,j} \leq t\right] }{\mathcal{G}_{k-1}} \\
  &= \exp\left(- s t\right) + \ProbCond{h \eta_{k,j} \leq t}{\mathcal{G}_{k-1}} \\
  &\leq \exp\left(- s t\right) + \flr{\frac{t}{h}} p_{\sigma},
\end{align*}
for all $t \geq 0,$ where we use \eqref{eq:HkLrpsR}. Otherwise, if $i \neq j,$ using \eqref{eq:HkLrpsR2} and \eqref{eq:HkLrpsR},
\begin{align*}
  I_1 
  &\leq \exp\left(- s t\right) + \ExpCond{\min\left\{1, 2\flr{\frac{t}{\tau_{\textnormal{w}}}} p_{d}\right\} \mathbbm{1}\left[h \eta_{k,j} \leq t\right] }{\mathcal{G}_{k-1}} \\
  &= \exp\left(- s t\right) + \min\left\{1, 2\flr{\frac{t}{\tau_{\textnormal{w}}}} p_{d}\right\} \ProbCond{h \eta_{k,j} \leq t}{\mathcal{G}_{k-1}} \\
  &\leq \exp\left(- s t\right) + \min\left\{1, 2\flr{\frac{t}{\tau_{\textnormal{w}}}} p_{d}\right\} \min\left\{1, \flr{\frac{t}{h}} p_{\sigma}\right\}
\end{align*}
for all $t \geq 0.$ Substituting the inequalities to \eqref{eq:ErkxlMMOhR},
\begin{align*}
  \Exp{\exp\left(- s y_{k, i}\right)} 
  &= \sum_{j\neq i} \left(\exp\left(- s t\right) + \min\left\{1, 2\flr{\frac{t}{\tau_{\textnormal{w}}}} p_{d}\right\} \min\left\{1, \flr{\frac{t}{h}} p_{\sigma}\right\}\right) \Exp{\exp\left(- s y_{k-1, j}\right)} \\
  &+\quad \left(\exp\left(- s t\right) + \min\left\{1, \flr{\frac{t}{h}} p_{\sigma}\right\}\right) \Exp{\exp\left(- s y_{k-1, i}\right)}.
\end{align*}
for all $i \in [n]$ and $t \geq 0.$ Thus,
\begin{align*}
  &\max_{i \in [n]} \Exp{\exp\left(- s y_{k, i}\right)}  \\
  &\leq \left[(n - 1) \left(\exp\left(- s t\right) + \min\left\{1, 2\flr{\frac{t}{\tau_{\textnormal{w}}}} p_{d}\right\} \min\left\{1, \flr{\frac{t}{h}} p_{\sigma}\right\}\right) + \left(\exp\left(- s t\right) + \min\left\{1, \flr{\frac{t}{h}} p_{\sigma}\right\}\right)\right] \\
  &\quad \times\max_{i \in [n]} \Exp{\exp\left(- s y_{k-1, i}\right)} \\
  &= \left[n \exp\left(- s t\right) + (n - 1)\left(\min\left\{1, 2\flr{\frac{t}{\tau_{\textnormal{w}}}} p_{d}\right\} \min\left\{1, \flr{\frac{t}{h}} p_{\sigma}\right\}\right) + \min\left\{1, \flr{\frac{t}{h}} p_{\sigma}\right\}\right] \max_{i \in [n]} \Exp{\exp\left(- s y_{k-1, i}\right)}.
\end{align*}
Taking $s = \frac{\log (8n)}{t},$ we get
\begin{equation}
\begin{aligned}
  &\max_{i \in [n]} \Exp{\exp\left(- s y_{k, i}\right)}  \\
  &\leq \left[\frac{1}{8} + \underbrace{(n - 1)\left(\min\left\{1, 2\flr{\frac{t}{\tau_{\textnormal{w}}}} p_{d}\right\} \min\left\{1, \flr{\frac{t}{h}} p_{\sigma}\right\}\right)}_{I_2 \eqdef} + \underbrace{\min\left\{1, \flr{\frac{t}{h}} p_{\sigma}\right\}}_{I_3 \eqdef}\right] \max_{i \in [n]} \Exp{\exp\left(- s y_{k-1, i}\right)}.
  \label{eq:lAtBuSDdbodJuIdq}
\end{aligned}
\end{equation}
Next, we take $t = \min\{t_1, t_2\},$ where
\begin{align*}
  t_1 \eqdef \max\left\{\frac{h}{32 p_{\sigma} n}, \frac{\tau_{\textnormal{w}}}{32 p_{d} n}, \frac{\sqrt{h \tau_{\textnormal{w}}}}{32 \sqrt{p_{\sigma} p_{d} n}}, \frac{h}{32}, \frac{\tau_{\textnormal{w}}}{32}\right\},
\end{align*}
and 
\begin{align*}
  t_2 \eqdef \frac{h}{32 p_{\sigma}}
\end{align*}
to ensure that $$I_3 \leq \min\left\{1, \frac{t_2 p_{\sigma}}{h}\right\} \leq \frac{1}{16}.$$ 
There are five possible values of $t_1.$ 

If $t_1 = \frac{h}{32 p_{\sigma} n},$ then
\begin{align*}
  I_2 \leq (n - 1) \min\left\{1, \frac{t_1 p_{\sigma}}{h}\right\} \leq \frac{1}{16},
\end{align*}
If $t_1 = \frac{\tau_{\textnormal{w}}}{32 p_{d} n},$ then
\begin{align*}
  I_2 \leq (n - 1) \min\left\{1, \frac{2 t_1 p_{d}}{\tau_{\textnormal{w}}}\right\} \leq \frac{1}{16}.
\end{align*}
If $t_1 = \frac{\sqrt{h \tau_{\textnormal{w}}}}{32 \sqrt{p_{\sigma} p_{d} n}},$ then
\begin{align*}
  I_2 \leq (n - 1)\frac{2 t_1^2 p_{d} p_{\sigma}}{\tau_{\textnormal{w}} h} \leq \frac{1}{16}.
\end{align*}
If $t_1 = \frac{h}{32},$ then
\begin{align*}
  I_2 \leq (n - 1) \min\left\{1, \flr{\frac{t_1}{h}} p_{\sigma}\right\} = 0.
\end{align*}
Finally, if $t_1 = \frac{\tau_{\textnormal{w}}}{32},$ then
\begin{align*}
  I_2 \leq (n - 1)\min\left\{1, 2\flr{\frac{t_1}{\tau_{\textnormal{w}}}} p_{d}\right\} = 0.
\end{align*}
Thus, using \eqref{eq:lAtBuSDdbodJuIdq}, we obtain
\begin{align*}
  &\max_{i \in [n]} \Exp{\exp\left(- s y_{k, i}\right)} \leq \left[\frac{1}{8} + \frac{1}{16} + \frac{1}{16}\right] \max_{i \in [n]} \Exp{\exp\left(- s y_{k-1, i}\right)} \leq e^{-1} \max_{i \in [n]} \Exp{\exp\left(- s y_{k-1, i}\right)}
\end{align*}
for our choice of $t.$ Unrolling the recursion and using $y_{0,i} = 0$ for all $i \in [n],$
\begin{align*}
  \max_{i \in [n]} \Exp{\exp\left(- s y_{k, i}\right)} \leq e^{-k}.
\end{align*}
We substitute it to \eqref{eq:iAeohfnTfnYY}, to get
\begin{align*}
  \Prob{y_{k} \leq \bar{t}} \leq e^{s \bar{t} + \log n - k}.
\end{align*}
It is left to choose $\bar{t} = \frac{k - \log n + \log \delta}{s}.$
\end{proof}

\end{document}